\documentclass{amsart}

\usepackage{enumerate}
\usepackage{amssymb}
\usepackage{amsmath,amscd}
\usepackage{amsthm}
\usepackage{graphicx}
\usepackage{pictexwd,dcpic}
 \usepackage{verbatim}

\newcommand{\R}{\mathbb{R}}
\newcommand{\C}{\mathbb{C}}
\newcommand{\HH}{\mathbb{H}}
\newcommand{\Z}{\mathbb{Z}}
\newcommand{\K}{\mathbb{K}}
\newcommand{\OO}{\operatorname{O}}
\newcommand{\SO}{\operatorname{SO}}
\newcommand{\GL}{\operatorname{GL}}
\newcommand{\U}{\operatorname{U}}
\newcommand{\SU}{\operatorname{SU}}
\newcommand{\Sp}{\operatorname{Sp}}
\newcommand{\F}{\mathcal{F}}
\newcommand{\srf}{singular Riemannian foliation}

\newcommand{\Hom}{\operatorname{Hom}}
\newcommand{\End}{\operatorname{End}}
\newcommand{\Sym}{\operatorname{Sym}}
\newcommand{\Cl}{\operatorname{Cl}}

\newcommand{\Le}{\mathcal{L}}

\newcommand{\Av}{\operatorname{Av}}
\newcommand{\diam}{\operatorname{diam}}
\newcommand{\Span}{\operatorname{span}}
\newcommand{\Spin}{\operatorname{Spin}}
\newcommand{\Pin}{\operatorname{Pin}}
\newcommand{\Hess}{\operatorname{Hess}}
\newcommand{\Gr}{\operatorname{Gr}}

\newtheorem{theorem}{Theorem}

\newtheorem{corollary}[theorem]{Corollary}
\newtheorem{lemma}[theorem]{Lemma}
\newtheorem{proposition}[theorem]{Proposition}

\newtheorem{maintheorem}{Theorem}

\theoremstyle{definition}
\newtheorem{definition}[theorem]{Definition}

\theoremstyle{remark}
\newtheorem{remark}[theorem]{Remark}
\newtheorem{example}[theorem]{Example}

\title[Foliations and quadratic polynomials]{Singular Riemannian foliations and their quadratic basic polynomials}

\author[R.~Mendes]{Ricardo Mendes$^*$}
\address{Mathematisches Institut, WWU M\"unster, Germany}
\email{rmendes@gmail.com}

\author[M.~Radeschi]{Marco Radeschi$^*$}
\address{Mathematisches Institut, WWU M\"unster, Germany}
\email{mrade\_02@uni-muenster.de}

\thanks{$^*$ received support from SFB 878: \emph{Groups, Geometry \& Actions}}

\subjclass[2010]{53C12, 13A50}

\begin{document}

\maketitle
\begin{abstract}
We present a new link between the Invariant Theory of infinitesimal \srf s and Jordan algebras. This, together with an inhomogeneous version of Weyl's First Fundamental Theorems, provides a characterization of the recently discovered Clifford foliations in terms of basic polynomials. This link also yields new structural results about infinitesimal foliations, such as the existence of non-trivial symmetries.
\end{abstract}


\section{Introduction}

Singular Riemannian foliations are certain partitions $\F$ of Riemannian manifolds $M$ into \emph{leaves}, which are smooth, connected, locally equidistant submanifolds of $M$ (see \cite{Molino} for the precise definition). When all the leaves have the same dimension, the foliation is called \emph{regular}, and these have been a classical object of study in Riemannian Geometry since the 1950's (see for example  \cite{Haefliger58, Reinhart59}). Moreover, \srf s may also be considered  as generalizations of  other classical objects, such as Isoparametric Foliations (see for example \cite{Thorbergsson00}); and the orbit decompositions of $M$ by isometric actions of connected Lie groups $G$ (these foliations are called \emph{homogeneous}).

In the present article we consider \emph{infinitesimal} \srf s with closed leaves. That is, closed \srf s $\F$ of a Euclidean space $V$, where the origin is a leaf. For simplicity, we will refer to these as \emph{infinitesimal foliations}. These correspond bijectively to closed \srf s of round spheres, via the Homothetic Transformation Lemma \cite{Molino}. Infinitesimal foliations generalize the orbit decomposition of orthogonal representations of compact, connected Lie groups $G$. Moreover, as in the homogeneous case, there is a Slice Theorem that describes small tubular neighbourhoods of leaves in a closed \srf\ $(M,\F)$, up to foliated diffeomorphism, from  data including a certain infinitesimal foliation called the \emph{slice foliation}, see \cite{MendesRadeschi15}. 

We may define, for any infinitesimal foliation $\F$ (or, more generally, for any partition) of $V$, the algebra $\R[V]^\F$ of \emph{basic polynomials}, that is, polynomials that are constant along the leaves of $\F$. A further similarity between orthogonal representations of compact groups and infinitesimal foliations is that $\R[V]^\F$ is finitely generated, and the leaves are common level sets of the generating polynomials (this is known as Hilbert's Theorem in the homogeneous case). We will refer to this fact as \emph{Algebraicity}, see \cite{LytchakRadeschi15}, and Theorem \ref{T:algebraicity} below. Note also that such a generating set determines the algebra of \emph{smooth} basic functions on $V$, and hence, via the Slice Theorem, the smooth basic functions on a tubular neighbourhood of a closed leaf in any (non-infinitesimal) \srf\ (see \cite{MendesRadeschi15}).

More generally, any set $\mathcal{P}\subset\R[V]$ of polynomials defines a partition of $V$ into its common level sets, which we denote $\Le(\mathcal{P})$. In this language, the Algebraicity Theorem above is equivalent to $\F=\Le(\R[V]^\F)$, which naturally leads to the question: what subsets $\mathcal{P}$ define infinitesimal foliations? Our first main result states that, for any given $(V,\F)$, the homogeneous basic polynomials of degree two, denoted $\R[V]^\F_2$, form such a set, up to connected components. 
\begin{maintheorem}
\label{MT:srf}
Let $(V,\F)$ be an infinitesimal foliation without trivial factors. Then the connected components of the leaves of  $\F_2=\Le(\R[V]^\F_2)$ form an infinitesimal foliation of $V$. 
\end{maintheorem}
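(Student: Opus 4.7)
The plan is to realize the connected components of the leaves of $\F_2$ as the orbit partition of a compact subgroup $G\subset\OO(V)$; any such partition is an infinitesimal (in fact homogeneous) singular Riemannian foliation, so the theorem will follow. The starting point is to endow $\R[V]^\F_2$ with extra algebraic structure. Identifying each quadratic $q\in\R[V]^\F_2$ with its symmetric Hessian $A_q$ via $q(v)=\langle A_q v,v\rangle$, set
\[
J:=\{A_q : q\in\R[V]^\F_2\}\subset\Sym(V).
\]
Since the leaves of an infinitesimal foliation lie on concentric round spheres, $|v|^2$ is basic, so $\mathrm{Id}\in J$. A direct computation yields $q_{(AB+BA)/2}(v)=\langle Av,Bv\rangle=\tfrac14\langle\nabla q_A,\nabla q_B\rangle_v$, and the pointwise inner product of the gradients of two basic functions is again basic (these gradients are horizontal and projectable on a singular Riemannian foliation). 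Hence $J$ is closed under the Jordan product $A\circ B=\tfrac12(AB+BA)$, making it a unital Jordan subalgebra of $\Sym(V)$, and thus a Euclidean (formally real) Jordan algebra.

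Applying the Jordan--von Neumann--Wigner classification, I would write $J=J_1\oplus\dots\oplus J_k$ as an orthogonal direct sum of simple summands, each of which is the self-adjoint part of $M_n(\K)$ for $\K\in\{\R,\C,\HH\}$, an Albert algebra, or a spin factor. This decomposition of $J$ should induce an orthogonal decomposition of $V$ into $J$-submodules on each of which the corresponding $J_i$ acts (up to multiplicities) as its standard representation. The hypothesis that $\F$ has no trivial factors is used here to rule out any direction in $V$ on which $J$ acts trivially and to make the module decomposition of $V$ match the Jordan-algebra decomposition of $J$.

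For each simple summand $J_i$ I would take the compact classical group $G_i\in\{\OO(m),\U(m),\Sp(m),\Spin(m)\}$ acting by the appropriate standard representation on the complementary tensor factor of the corresponding block of $V$, and set $G=\prod_i G_i\subset\OO(V)$. By construction every $q\in\R[V]^\F_2$ is $G$-invariant, so each $G$-orbit is contained in a single leaf of $\F_2$. The reverse inclusion, up to connected components, is exactly the content of the \emph{inhomogeneous} Weyl First Fundamental Theorem announced in the abstract: for each of the model pairs $(V_i,G_i)$ one has $\R[V_i]^{G_i}_2=J_i$, this algebra generates the full invariant ring, and two points of $V_i$ lie in the same $G_i$-orbit if and only if they are not separated by these quadratics. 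Assembling these statements across the summands realises the connected components of the leaves of $\F_2$ as $G$-orbits, which concludes the proof.

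I expect the main obstacle to be precisely this inhomogeneous Weyl FFT underlying the identification ``connected components of common level sets $=$ orbits''. The classical First Fundamental Theorems only produce generators of the invariant ring, whereas what is needed here is the stronger geometric statement that the degree-two invariants already separate orbits up to connected components, for each $G_i$ in its standard representation. This has to be handled case by case --- via Gram-matrix and Witt-style arguments for the $\OO$, $\U$, $\Sp$ series, and by a separate Clifford-module argument for the spin factors --- and then reassembled. A secondary, more technical difficulty is ensuring that the ambient Euclidean inner product on $V$ is compatible with the inner products coming from the standard models of the $J_i$-modules; this is the point at which the ``no trivial factors'' hypothesis is actually used.
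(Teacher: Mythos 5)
Your Jordan-algebraic setup --- identifying $\R[V]^\F_2$ with a formally real Jordan subalgebra of $\Sym(V)$ via Hessians, invoking the Jordan--von Neumann--Wigner classification, decomposing $J$ into simple ideals, and inducing a corresponding orthogonal decomposition of $V$ --- is precisely the route the paper takes to prove Theorem~\ref{MT:classification}, which is then used to derive Theorem~\ref{MT:srf}. So the infrastructure is right.

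However, the concluding step of your plan --- realizing the connected components of the leaves of $\F_2$ as orbits of a compact group $G=\prod_i G_i\subset\OO(V)$ --- contains a genuine gap that is not merely technical. For a simple summand $J_i$ that is a spin factor $\mathcal{J}\Spin_{m+1}$, the factor of $\F_2$ on $V_i$ is a Clifford foliation $\F_C$. These foliations are \emph{inhomogeneous} for most $m$ (the octonionic Hopf foliation on $\R^{16}$ is a well-known example); the group $\Pin(m+1)$ (or $\Spin(m+1)$) certainly acts by foliated isometries and preserves every $q\in J_i$, but its orbits are strictly contained in the leaves of $\F_C$ and are generically of lower dimension. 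There is no compact group whose orbits coincide with the leaves of $\F_C$ in general, so the identification ``connected components of common level sets $=$ orbits'' is false, and no inhomogeneous FFT can fix it: the issue is not separation of orbits by quadratics, it is that $\Le(J_i)$ is not an orbit partition at all. The paper sidesteps this by not trying to reduce to homogeneity: once $\F_2$ is identified as a product of Clifford foliations and standard diagonal representations (Theorem~\ref{MT:classification}), it cites the existing result that Clifford foliations are singular Riemannian foliations and that taking connected components of the disconnected cases ($C_{1,1},C_{1,2},C_{3,1},C_{7,1}$) again yields Clifford foliations, while for the homogeneous factors passing to connected components just means replacing $\OO(n),\SU^\pm(n),\Sp(n)$ by their identity components. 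To repair your argument you would need to replace ``$G$-orbits'' with ``leaves of the model foliation $\Le(J_i)$'' and then show, for each model, that $\Le(J_i)$ is a singular Riemannian foliation by \emph{non-group-theoretic} means --- which is exactly the content of the Radeschi result the paper leans on.
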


In fact we present a classification of the foliations $\F_2$, which is then used to prove Theorem \ref{MT:srf}. The building blocks are \emph{Clifford foliations} (introduced in \cite{Radeschi14}, see Example \ref{E:Clifford}), given by $\Le(|x|^2,\left<P_0x,x\right>,\ldots \left<P_mx,x\right>)$, where $P_0,\ldots P_m\in \Sym^2(\R^{2l})$ form a Clifford system; and the standard diagonal representations of $\OO(k)$, $\U(k)$, $\Sp(k)$ on $n$ copies of $\R^k$, $\C^k$, $\HH^k$ respectively.

\begin{maintheorem}[Classification]
\label{MT:classification}
Let $(V,\F)$ be an infinitesimal foliation without trivial factors. Then $(V,\F_2)$ is isomorphic to a product of Clifford foliations and (orbit decompositions of) standard diagonal representations.
\end{maintheorem}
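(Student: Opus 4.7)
The plan is to put a Jordan algebra structure on $\R[V]^\F_2$ and to read off the classification from the Jordan--von Neumann--Wigner theorem. Identifying a quadratic polynomial $p_A(x)=\langle Ax,x\rangle$ with its symmetric operator $A\in\Sym^2(V)$ embeds $\R[V]^\F_2$ into $\Sym^2(V)$, which carries the standard Jordan product $A\circ B=\tfrac{1}{2}(AB+BA)$. Since the foliation is infinitesimal the function $|x|^2$ is basic, so $\R[V]^\F_2$ is unital, and formal reality is inherited from $\Sym^2(V)$.

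The first step is to show that $\R[V]^\F_2$ is closed under the Jordan product, making it a formally real unital Jordan subalgebra of $\Sym^2(V)$. The key identity $\langle\nabla p_A,\nabla p_B\rangle=4\,p_{A\circ B}$ reduces the closure to the transverse-metric fact that the pointwise inner product of the gradients of two basic polynomials is itself basic: the gradients of basic functions are horizontal, and the horizontal metric descends to the leaf space.

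Next, invoke Jordan--von Neumann--Wigner to decompose $\R[V]^\F_2$ as an orthogonal direct sum of simple formally real unital Jordan algebras, each isomorphic to $H_n(\R)$, $H_n(\C)$, $H_n(\HH)$, a spin factor $J\Spin(m)$, or the exceptional algebra $H_3(\mathbb{O})$. The central idempotents of these simple pieces are mutually orthogonal projections in $\Sym^2(V)$ whose images split $V$ as an orthogonal sum of $\F_2$-saturated subspaces $V_J$, and $(V,\F_2)$ splits as the corresponding product. On each $V_J$ one must identify $\F_2|_{V_J}$ geometrically: if $J\cong H_n(\K)$, a natural $\K$-module decomposition $V_J\cong \K^n\otimes_\K W$ arises from the matrix-unit idempotents, and an inhomogeneous version of Weyl's First Fundamental Theorem identifies the common level sets of the quadratic forms in $J$ with the orbits of the standard diagonal representation of $\OO(k)$, $\U(k)$ or $\Sp(k)$; if $J$ is a spin factor, the traceless elements of $J$ form a Clifford system on $V_J$, so $\F_2|_{V_J}$ is by definition a Clifford foliation; and the exceptional case $H_3(\mathbb{O})$ must be ruled out.

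The main obstacle will be this last geometric realization step. Jordan theory yields only an abstract simple subalgebra $J\In \Sym^2(V_J)$, and from it one must reconstruct intrinsically the division-algebra module or Clifford module structure on $V_J$ and then verify that $\Le(J)$ matches the target foliation exactly, rather than being a refinement or a coarsening. This is precisely where the inhomogeneous Weyl theorem alluded to in the abstract does the heavy lifting in the matrix cases, and excluding $H_3(\mathbb{O})$ presumably requires a separate, more geometric argument exploiting that $\R[V]^\F_2$ must arise from an actual infinitesimal foliation and not just as an abstract Jordan algebra.
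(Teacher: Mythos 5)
Your overall strategy — embed $\R[V]^\F_2$ into $\Sym^2(V)$ as a special, formally real, unital Jordan algebra, invoke the Jordan--von Neumann--Wigner classification of such algebras into simple ideals, split $V$ accordingly, and identify each factor geometrically — is exactly the paper's route, and you correctly identify the realization step as where the real work happens. But there are two genuine gaps.

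First, you worry that ruling out $H_3(\mathbb{O})$ ``presumably requires a separate, more geometric argument.'' It does not: you already observed that $\R[V]^\F_2$ embeds in $\Sym^2(V)\subset\End(V)^+$, and that is precisely what it means for the Jordan algebra to be \emph{special}. Albert's theorem says $H_3(\mathbb{O})$ is exceptional, so it simply cannot occur as an ideal of a special Jordan algebra. No geometric input from the foliation is needed — the exclusion is purely algebraic and immediate.

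Second, the identification step is handled differently and more cleanly in the paper than you sketch. Rather than trying to reconstruct a $\K$-module structure from matrix-unit idempotents and appeal to an inhomogeneous FFT, the paper extends $\eta\colon J\to\Sym^2(V)$ to a map of associative algebras out of the universal enveloping algebra $\mathcal{U}(J)$. Since $\mathcal{U}(J)=\bigoplus_i\mathcal{U}(J_i)$ when $J$ is unital (Proposition \ref{P:enveloping}(a)), this yields the orthogonal splitting $V=\bigoplus V_i$ with block-diagonal $\eta$, and then for $J_i=H_n(\K)$ (with $(n,\K)\neq(2,\HH)$) one has $\mathcal{U}(J_i)=\K^{n\times n}$, a simple algebra whose unique irreducible module is $\K^n$, so $V_i\cong\K^{nk}$ with $\eta=\,\cdot\,\otimes I_k$, giving exactly the quadratic invariants of Example \ref{E:standard}. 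That the level sets of these are the orbits is separation of orbits by the Gram matrix (more elementary than FFT; FFT is the content of Theorem \ref{MT:fft}, the converse direction). You also need to handle the non-simple case $\mathcal{U}(H_2(\HH))=\HH^{2\times2}\oplus\HH^{2\times2}$, which the paper sidesteps by observing $H_2(\HH)\cong\mathcal{J}\Spin_5$ and absorbing it into the Clifford case. Your sketch doesn't flag this and would otherwise break there.
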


When $\F$ is homogeneous, $\F_2$ is again homogeneous and can be computed using Schur's lemma. It decomposes as a product with factors corresponding to isotypical components, and each factor is one of the standard diagonal representations above depending on the number and type of irreducible components (see Proposition \ref{P:homogeneous}). 

%

The proof of Theorem \ref{MT:classification} starts from the observation that $\R[V]^\F_2$ has the structure of a \emph{Jordan algebra}. Then one invokes the classification of special, formally real Jordan algebras \cite{JvNW34, Albert34}. 

As a particular case of Theorem \ref{MT:classification}, if $(V,\F)$ is an infinitesimal foliation such that $\R[V]^\F$ is generated by quadratic polynomials, then $\F=\F_2$ must be one of the foliations listed there. The next theorem shows that the converse holds as well.

\begin{maintheorem}[First Fundamental Theorem]
\label{MT:fft}
Let $(V,\F)$ be a product of Clifford foliations and (orbit decompositions of) standard diagonal representations. Then $\R[V]^\F$ is generated in degree two.
\end{maintheorem}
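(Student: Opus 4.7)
Since the leaves of a product foliation are products of leaves of the factors, the algebra of basic polynomials splits as a tensor product, $\R[V_1\times V_2]^{\F_1\times\F_2}\cong \R[V_1]^{\F_1}\otimes_\R\R[V_2]^{\F_2}$, and a tensor product of algebras generated in degree two is still generated in degree two. Hence it suffices to prove the theorem in the two cases of (i) a standard diagonal representation and (ii) a single Clifford foliation.

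\textbf{Standard diagonal representations.} For $\OO(k)$, $\U(k)$, $\Sp(k)$ acting diagonally on $n$ copies of $\R^k$, $\C^k$, $\HH^k$, this is precisely Weyl's classical First Fundamental Theorem for the respective classical group. The invariants are generated by the pairwise inner products $\langle v_i,v_j\rangle$ of the argument vectors (together with their real and imaginary parts in the $\U$- and $\Sp$-cases), all of which are quadratic. I would simply cite these classical results.

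\textbf{Clifford foliations.} This is the substantive content --- the ``inhomogeneous version of Weyl's First Fundamental Theorem'' alluded to in the abstract. Fix a Clifford system $P_0,\ldots,P_m$ on $V=\R^{2l}$ and set $\phi_{-1}(x)=|x|^2$ and $\phi_i(x)=\langle P_ix,x\rangle$ for $0\leq i\leq m$. By the very definition of the Clifford foliation, the leaves are the fibres of $\Phi=(\phi_{-1},\phi_0,\ldots,\phi_m)\colon V\to\R^{m+2}$, so the inclusion $\R[\phi_{-1},\ldots,\phi_m]\subseteq\R[V]^\F$ is tautological; the substance is the reverse inclusion. My plan is to exploit the fact that the $\Pin(m+1)$-action on $V$ arising from the Clifford system preserves each $\phi_i$, so every leaf of $\F$ is a union of $\Pin(m+1)$-orbits and in particular $\R[V]^\F\subseteq\R[V]^{\Pin(m+1)}$. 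One first computes the $\Pin(m+1)$-invariants using the representation theory of the Clifford algebra on $\Sym^\bullet V^*$, and then isolates within this ring the subalgebra of polynomials that are constant also on each leaf of $\F$ --- which is generally strictly larger than a single $\Pin(m+1)$-orbit. The outcome one expects, and must verify, is exactly the subring $\R[\phi_{-1},\ldots,\phi_m]$.

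\textbf{Main obstacle.} The hardest step is this last one in the Clifford case: since Clifford foliations are genuinely inhomogeneous for generic $m$, no single Weyl-type FFT applies directly, and one must reconcile $\Pin(m+1)$-invariance with the additional identifications that collapse several orbits into a leaf. The image of $\Phi$ lies in the cone $\{\phi_0^2+\cdots+\phi_m^2\leq\phi_{-1}^2\}$ attached to the spin-factor Jordan algebra --- a fact which ties the statement to Theorem \ref{MT:classification} and suggests exploiting the Jordan structure on $\R[V]^\F_2$. A clean alternative route would be to establish that $\R[\phi_{-1},\ldots,\phi_m]$ is integrally closed in its fraction field and combine this with the fact that $\Phi$ separates leaves (so the two rings share a fraction field), thereby forcing equality $\R[V]^\F=\R[\phi_{-1},\ldots,\phi_m]$.
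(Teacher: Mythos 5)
Your reduction to indecomposable factors and your treatment of the standard diagonal representations via Weyl's classical First Fundamental Theorems are correct and match the paper (which passes through complexification to invoke the complex FFTs; citing the real forms directly is equally fine).

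The Clifford case, which you correctly identify as the substantive content, is not actually proved. Your first route (compute the $\Pin(m+1)$-invariants, then isolate the $\F_C$-basic ones) is stated as a plan, not an argument, and is not the route the paper takes. Your ``clean alternative route'' has two concrete gaps. First, ``$\psi$ separates leaves $\Rightarrow$ $\R[\psi_1,\dots,\psi_{m+2}]$ and $\R[V]^{\F_C}$ share a fraction field'' is not a formal consequence of separation: it is precisely the content of Lemma~\ref{L:fractions}, which the paper proves by constructing an explicit linear transversal $W\subset V$ meeting every leaf, on which the problem reduces to the invariant theory of a small finite group. Second, and more seriously, ``$\R[\psi]$ integrally closed plus same fraction field $\Rightarrow$ equality'' is a non-sequitur; one also needs $\R[V]^{\F_C}$ to be \emph{integral} over $\R[\psi]$, and that is not available. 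The naive Noether-normalization mechanism fails here because $\psi_1^\C=\sum x_i^2$ is isotropic over $\C$, so the complexified null cone $\{\psi_1^\C=\dots=\psi_{m+2}^\C=0\}$ is positive-dimensional and $\C[V^\C]$ is not a finite $\C[\psi^\C]$-module; in fact the integrality you would need is essentially equivalent to the theorem itself. The paper instead finishes by denominator cancellation: write $f\cdot\psi^*Q_2=\psi^*Q_1$ with $\deg(\psi^*Q_2)$ minimal, complexify, and use the surjectivity of $\psi^\C$ (Lemma~\ref{L:surjective}, which your sketch does not supply) together with the Nullstellensatz and unique factorization in the coordinate ring of the image (via Nagata's theorem when $m=l$) to force $Q_2$ to be constant. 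Both of the lemmas carrying the Clifford case are missing from your outline.
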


In the special case of standard diagonal representations, the complex version of this result was proved by Weyl \cite{Weyl} under the name of First Fundamental Theorem of $\OO(n,\C)$, $\GL(n,\C)$, $\Sp(n,\C)$ respectively. The real version then follows easily. In a similar way, to prove the result for Clifford foliations we introduce the notion of \emph{complexification} of an infinitesimal foliation.

Theorems \ref{MT:classification} and \ref{MT:fft} give a simple characterization of Clifford foliations, as those inhomogeneous infinitesimal foliations whose algebra of basic polynomials is generated in degree two. The only exceptions come in three families, which coincide with the standard actions of $\OO(k)$, $\U(k)$, $\Sp(k)$ on \emph{two} copies of $\R^k$, $\C^k$, $\HH^k$.

Among the foliations $\F_2$ listed in Theorem \ref{MT:classification}, the ones with disconnected leaves are precisely the ones given by $\OO(k)$ acting on $(\R^k)^n$ for $n\geq k$, or the Clifford foliations $C_{3,1}$ and $C_{7,1}$ (see Example \ref{E:Clifford}). Replacing the leaves of these foliations with their connected components, Theorem \ref{MT:fft} may not hold anymore: for example replacing the $\OO(3)$-action on $(\R^3)^3=\R^{3\times 3}$ with the $\operatorname{SO}(3)$ sub-action, a new degree-three invariant appears, namely the determinant.

To describe the geometric significance of $\F_2$, we define a subspace $W\subset V$ to be invariant if it is the union of leaves of $\F$ (that is, a saturated set). Then $\R[V]^\F_2$ and $\F_2$ ``encode'' the structure of the set of all $\F$-invariant subspaces, and we arrive at the following:
\begin{maintheorem}[Geometric characterization]
\label{MT:characterization}
Let $(V,\F)$ be an infinitesimal foliation. Then the foliation by the connected components of $\F_2$ is the coarsest infinitesimal foliation that has the same invariant subspaces as $\F$.
\end{maintheorem}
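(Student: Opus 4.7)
The plan is to translate $\F$-invariance of linear subspaces into an algebraic condition on $\R[V]^\F_2$, and then leverage the Jordan-algebra structure of $\R[V]^\F_2$ established earlier in the paper. Throughout, write $\F_2^{cc}$ for the infinitesimal foliation whose leaves are the connected components of the leaves of $\F_2$. The first step is a key lemma: for a linear subspace $W\subseteq V$, $W$ is $\F$-invariant if and only if the quadratic polynomial $q_W(x):=\langle\pi_W x,x\rangle$ lies in $\R[V]^\F_2$. For the forward direction, since $W$ is saturated by $\F$-leaves, the transnormality of \srf s yields that $d(\cdot,W)^2=|x|^2-q_W(x)$ is constant along each leaf; together with $|x|^2\in\R[V]^\F_2$ this forces $q_W\in\R[V]^\F_2$. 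For the reverse direction, $W$ is the zero set of the basic polynomial $|x|^2-q_W(x)=|\pi_{W^\perp}x|^2$, hence saturated by $\F$-leaves.

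Next, I would exploit the Jordan-algebra structure. Viewing elements of $\R[V]^\F_2$ as symmetric operators on $V$, the Jordan idempotents of $\R[V]^\F_2$ are exactly the orthogonal projections $\pi_W$ lying in $\R[V]^\F_2$, which by the key lemma correspond bijectively to $\F$-invariant subspaces $W\subseteq V$. The spectral theorem for formally real Jordan algebras then gives that every element of $\R[V]^\F_2$ is an $\R$-linear combination of pairwise orthogonal idempotents, and hence
\[
\R[V]^\F_2 \;=\; \Span\bigl\{q_W : W \subseteq V \text{ is an }\F\text{-invariant subspace}\bigr\}.
\]

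Given this span identity, I would finish in two steps. First, $\F_2^{cc}$ has the same invariant subspaces as $\F$: by the key lemma, each $\F$-invariant $W$ is a level set of an element of $\R[V]^\F_2$ and hence a union of $\F_2$-leaves, \emph{a fortiori} of their components; conversely, $\R[V]^\F_2\subseteq\R[V]^\F$ makes $\F$ a refinement of $\F_2^{cc}$, so every $\F_2^{cc}$-saturated subspace is $\F$-saturated. Second, $\F_2^{cc}$ is coarsest with this property: given any infinitesimal foliation $\F'$ with the same invariant subspaces as $\F$, apply the key lemma to $\F'$ to obtain $q_W\in\R[V]^{\F'}_2$ for each $\F$-invariant $W$; the span identity then yields $\R[V]^\F_2\subseteq\R[V]^{\F'}$, so each connected $\F'$-leaf is contained in a common level set of $\R[V]^\F_2$, i.e.\ in an $\F_2^{cc}$-leaf, and thus $\F'$ refines $\F_2^{cc}$.

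The hard step is the span identity $\R[V]^\F_2=\Span\{q_W\}$. It rests on the formally-real-Jordan-algebra structure of $\R[V]^\F_2$ developed earlier in the paper, both to invoke the spectral theorem (and conclude the algebra is spanned by its idempotents) and to identify those idempotents, via the key lemma, with orthogonal projections onto $\F$-invariant subspaces. Once in place, the remaining deductions about $\F$, $\F_2$, and $\F_2^{cc}$ reduce to straightforward bookkeeping about saturated level sets and their connected components.
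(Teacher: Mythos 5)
Your proof is correct and follows essentially the same route as the paper's. Your ``key lemma'' is precisely Lemma~\ref{L:idempotents}(a) in disguise (the bijection $W\leftrightarrow q_{W^\perp}$ between invariant subspaces and idempotents of $J=\R[V]^\F_2$), your span identity is Lemma~\ref{L:idempotents}(b), and the two concluding steps match the paper's two paragraphs. Two small remarks: the paper derives the span identity by an elementary induction on the number of eigenvalues of $\Hess(f)/2$ rather than invoking the general spectral theorem for formally real Jordan algebras, so it is a bit more self-contained; and the fact that $d(\cdot,W)^2$ is constant along leaves when $W$ is saturated is really the equidistance of leaves in a \srf\ (not ``transnormality,'' which in this paper refers to the product $\left<\nabla f,\nabla g\right>$ on basic functions), though the conclusion you draw from it is correct.
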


In addition to the existence of invariant subspaces, non-triviality of $\F_2$ imposes additional structure on the geometry of the original foliation $\F$, which generalizes the notions of isotypical components and type. More precisely, the isotypical components are defined as the indecomposable factors of $\F_2$, and their type as real, complex, quaternionic or Clifford according to the classification in Theorem \ref{MT:classification}. Moreover, non-triviality of $\F_2$ implies the existence of enough symmetries of $(V,\F)$ to act transitively on the connected components of the moduli space of invariant subspaces: 
\begin{maintheorem}[Symmetry]
\label{MT:symmetry}
Let $(V,\F)$ be an infinitesimal foliation, and $U,W\subset V$ be invariant subspaces in the same isotypical component and with the same dimension. Then there exists a foliated linear isometry $g:V\to V$ such that $g(U)=W$.
\end{maintheorem}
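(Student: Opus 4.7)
My approach is a reduction-plus-case-analysis argument based on Theorems \ref{MT:classification} and \ref{MT:characterization}.

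First I reduce to a single isotypical component. Writing $V = \bigoplus_i V_i$ according to the decomposition of $\F_2$ into indecomposable factors given by Theorem \ref{MT:classification}, I may assume $U, W \subset V_1$. Each $V_i$ is $\F_2$-saturated, hence $\F$-saturated by Theorem \ref{MT:characterization}. Appealing to a de Rham-type decomposition for infinitesimal foliations (which holds whenever the ambient space splits orthogonally into saturated subspaces), $\F$ itself factors as $\prod_i \F|_{V_i}$; thus any foliated isometry of $\F|_{V_1}$, extended by the identity on $V_1^\perp$, is a foliated isometry of $\F$, and it suffices to treat the case $V = V_1$.

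Second, by Theorem \ref{MT:classification}, $\F_2$ is of one of two types: (a) the orbit foliation of a standard diagonal representation of $\OO(k)$, $\U(k)$, or $\Sp(k)$ on $(\K^k)^n$ for $\K \in \{\R, \C, \HH\}$; or (b) a Clifford foliation $C_{m,l}$. By Theorem \ref{MT:characterization}, the $\F$-invariant subspaces of $V$ coincide with those of $\F_2$. In case (a) these are exactly $\K^k \otimes L$ for subspaces $L \subset \K^n$, and the natural right action of $\OO(n)$, $\U(n)$, or $\Sp(n)$ on the multiplicity factor is transitive on such subspaces of a given dimension. In case (b) they are the Clifford submodules of $V$, and the group of linear isometries of $V$ commuting with every $P_i$ acts transitively on submodules of a given dimension, using that any two Clifford submodules of the same dimension are isomorphic and that any such isomorphism can be implemented by an isometry of the ambient space commuting with the Clifford system.

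The key remaining issue is showing that the transitive group exhibited in each case lies in the group of foliated isometries of $\F$, not merely of $\F_2$. In both cases this group commutes with the structure (left group action or Clifford system) that generates the quadratic basic polynomials, so it preserves $\R[V]^\F_2$ and hence $\F_2$. The main obstacle is proving it also preserves any higher-degree basic polynomials present in $\R[V]^\F$ when $\F$ is strictly finer than $\F_2$. I expect to address this by a structural analysis: in the homogeneous case, one should show that $\F|_{V_1}$ is the orbit foliation of a subgroup of the left group normalized by the right commutant; in the Clifford case, one hopes to show that $\F|_{V_1} = \F_2|_{V_1}$ by arguing, via the First Fundamental Theorem \ref{MT:fft}, that all basic polynomials are already determined by the Clifford structure. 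This structural rigidity, tying the finer foliation $\F$ to its $\F_2$-skeleton within each isotypical component, is where the heart of the argument lies.
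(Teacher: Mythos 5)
Your proposal has two genuine gaps, the first of which is a flat-out error and the second of which you yourself flag as unresolved.

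The reduction to a single isotypical component via a ``de Rham-type decomposition'' is false: an infinitesimal foliation does \emph{not} factor as a product over a decomposition of $V$ into saturated subspaces, even when those are the isotypical components. (The paper even warns about this after Lemma~\ref{L:idempotents}: the restrictions of $\F$ to its irreducible components do not determine $\F$.) Concretely, take $\SO(3)$ acting diagonally on $\R^3\oplus\R^5$ (standard plus the five-dimensional irreducible). The isotypical components are $\R^3$ and $\R^5$, the restricted foliations are by round spheres, but generic orbits are three-dimensional, not eight-dimensional. Moreover, the claim that a foliated isometry of $\F|_{V_1}$ extended by the identity on $V_1^\perp$ is foliated for $\F$ also fails here: $A\oplus I_5$ is foliated for $\F$ only when $A$ centralizes $\SO(3)$ on $\R^3$, i.e.\ $A=\pm I$, even though every $A\in\OO(3)$ is a foliated isometry of $\F|_{\R^3}$. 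So you cannot pass to a single factor and work there in isolation.

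The second gap is the one you explicitly acknowledge: after exhibiting a transitive group in each case of Theorem~\ref{MT:classification}, you must show it consists of foliated isometries of $\F$, not just of the coarser $\F_2$, and you only ``hope'' or ``expect'' to do so. This is not a minor verification but the central point, and the route you sketch (e.g.\ showing $\F=\F_2$ on Clifford-type components via Theorem~\ref{MT:fft}) is not available: $\F$ can be strictly finer than $\F_2$ on a Clifford-type component, since composed foliations (see Corollary~\ref{C:composed}) give infinitely many inhomogeneous $\F$ with the same $\F_2$. The paper's resolution is to go the other way around: rather than picking a transitive group abstractly and then trying to prove it preserves $\F$, it constructs the group $G$ as the one generated by orthogonal endomorphisms of the form $\Hess(f)$ with $f\in\R[V]^\F_2$ (Proposition~\ref{P:symmetry}), which by Proposition~\ref{P:multi-grading}(e) automatically send leaves of $\F$ to leaves. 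Proposition~\ref{P:symmetry} then identifies $G$ component-by-component as $\OO(n)$, $\SU^\pm(n)$, $\Sp(n)$, or $\Pin(m+1)$, and Proposition~\ref{P:moduli} identifies the moduli spaces of invariant subspaces as Grassmannians or spheres, on which these groups are visibly transitive. That construction-first strategy is what your plan is missing.
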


An important open question in the area of \srf s is that of \emph{Smoothness of Isometries}, see \cite{AlexandrinoLytchak11}, which may be phrased as follows. Does the metric structure on the leaf space $V/\F$ determine its smooth structure, or, equivalently via \cite{MendesRadeschi15}, the algebra $\R[V]^\F$? We obtain the partial answer that $\R[V]^\F_2$ is determined by the metric structure of the leaf space $V/\F$:
\begin{maintheorem}
\label{MT:smoothness}
Let $(V,\F)$ and $(V',\F')$ be infinitesimal foliations without trivial factors, and $\phi:V/\F\to V'/\F'$ an isometry. Then $\phi^*$ induces an  isomorphism of Jordan algebras $\R[V']^{\F'}_2\to\R[V]^\F_2$.
\end{maintheorem}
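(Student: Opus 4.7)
The plan is to recover both the vector space $\R[V]^\F_2$ and its Jordan product from the metric on $V/\F$, in a manner manifestly preserved by any leaf-space isometry $\phi$.

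I would start by exploiting the metric cone structure of $V/\F$. Since $\F$ has no trivial factors, the tangent cone at the origin leaf $[0]$ has no Euclidean direct factor, while every other point has a non-trivial radial $\R$-direction; hence $[0]$ is the unique vertex and is metrically distinguished. It follows that $\phi$ sends vertex to vertex and preserves the radial function; in particular $\phi^{*}(|x'|^{2})=|x|^{2}$, so $|x|^{2}\in\R[V]^{\F}_{2}$ is canonically identified by $\phi^{*}$.

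Next I would use the Dirichlet bracket. For smooth basic functions $f,g$ on $V$, the bracket
\[
\{f,g\}:=\langle\nabla f,\nabla g\rangle
\]
is itself basic, and on the principal stratum $(V/\F)^{\mathrm{reg}}$ -- which is metrically distinguished and hence preserved by $\phi$ -- it coincides with the ordinary inner product of gradients in the quotient Riemannian structure. Therefore $\phi^{*}\{f',g'\}=\{\phi^{*}f',\phi^{*}g'\}$ on the regular stratum: the Dirichlet bracket is metric-invariant. A direct calculation shows that for $p,q\in\R[V]^{\F}_{2}$ corresponding to symmetric operators $A,B\in\Sym^{2}(V)^{\F}$,
\[
\{p,q\}(x)=\langle 2Ax,2Bx\rangle=4\,\langle(A\circ B)x,x\rangle=4\,(p\circ q)(x),
\]
so the Jordan product on $\R[V]^{\F}_{2}$ agrees, up to the factor $4$, with the Dirichlet bracket. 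Hence the Jordan multiplication on $\R[V]^{\F}_{2}$ is itself metric-invariant.

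The crux of the argument is to verify that $\phi^{*}$ actually maps $\R[V']^{\F'}_{2}$ into $\R[V]^{\F}_{2}$. For $p'\in\R[V']^{\F'}_{2}$, the pullback $\phi^{*}p'$ is $2$-homogeneous with respect to the cone scaling and smooth on the regular stratum. To upgrade this to polynomiality on all of $V$, I would work inside the finite-dimensional span obtained from $|x|^{2}$ and $\phi^{*}p'$ by iterated Dirichlet brackets, and invoke Theorem~\ref{MT:classification} together with the First Fundamental Theorem~\ref{MT:fft} to identify this span with a bracket-closed Jordan algebra of quadratic polynomials; the Algebraicity Theorem~\ref{T:algebraicity} then provides the required polynomial extension. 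Once $\phi^{*}p'\in\R[V]^{\F}_{2}$ is established, the preservation of the Dirichlet bracket delivers the Jordan algebra isomorphism.

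The main obstacle is precisely this last step: turning the smoothness of $\phi$ on the principal stratum into polynomial regularity of $\phi^{*}p'$ on all of $V$. This is the degree-$2$ restriction of the open Smoothness-of-Isometries problem mentioned in the introduction, and is what forces the proof to use the classification results of this paper rather than just the general metric theory of Alexandrov spaces.
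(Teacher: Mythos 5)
Your opening steps are sound and overlap with the paper's: the origin leaf is metrically distinguished as the unique vertex (every other point has antipodal directions, giving space-of-directions diameter $\pi$), so $\phi$ fixes the vertex and $\phi^*(|x'|^2)=|x|^2$; and the transnormal bracket $\langle\nabla f,\nabla g\rangle$ is a metric invariant on the regular stratum, which recovers the Jordan product on quadratics. Both of these appear (the latter as the final paragraph of the paper's proof).

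The genuine gap is the step you yourself flag: showing $\phi^*(\R[V']^{\F'}_2)\subset\R[V]^\F_2$. The route you sketch does not work as stated. Before you can iterate the Dirichlet bracket you already need $\phi^*p'$ to be smooth across the singular set, and nothing in your setup gives you that; Theorem~\ref{MT:classification} and Theorem~\ref{MT:fft} are statements about the algebra $\R[V]^\F_2$ once you know you are inside it, and Theorem~\ref{T:algebraicity} is a statement about polynomials that does not promote a $2$-homogeneous basic function, a priori only smooth on the regular stratum, to a polynomial. So the ``polynomial extension'' step has no actual source.

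The missing idea is Lemma~\ref{L:pi/2}: images of $\F$-invariant subspaces in $SV/\F$ are characterized purely metrically as the subsets $X$ with $X^{**}=X$, where $X^*=\{z:d(z,X)=\pi/2\}$ (after splitting off the easy case where $\diam(SV/\F)<\pi/2$, which forces $\R[V]^\F_2=\R\cdot|x|^2$). Hence $\phi$ carries (projections of) invariant subspaces to invariant subspaces, so $\phi^*$ carries each distance-square function $d(\cdot,W')^2$ to some $d(\cdot,W)^2$. These are precisely the idempotents of the Jordan algebra by Lemma~\ref{L:idempotents}(a), and they \emph{span} $\R[V]^\F_2$ by Lemma~\ref{L:idempotents}(b). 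This produces the linear isomorphism $\phi^*:\R[V']^{\F'}_2\to\R[V]^\F_2$ by explicitly exhibiting a spanning set of quadratic polynomials in the image, sidestepping entirely the regularity problem your approach runs into. After that, your bracket-invariance observation finishes the argument, exactly as in the paper.
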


The present article is organized as follows. Section \ref{S:basic} contains general remarks about the algebra of basic polynomials, including a description of degree-one invariants. Then in Section \ref{S:classification} we make the link with Jordan algebras, and use it prove Theorems \ref{MT:srf} and \ref{MT:classification}. Section \ref{S:invariant} contains definitions and some properties of invariant subspaces and isotypical components for infinitesimal foliations, including the proof of Theorem \ref{MT:characterization}. In Section \ref{S:symmetries} we produce some symmetries of an infinitesimal foliation, leading to the proof of Theorem \ref{MT:symmetry}. Section \ref{S:leafspace} is devoted to proving Theorem \ref{MT:smoothness}. In Section \ref{S:fft} we recall Weyl's First Fundamental Theorems and extend them to the case of Clifford foliations, giving a proof of Theorem \ref{MT:fft}. Finally, Appendix \ref{A:jordan} contains the basic facts about Jordan algebras that are used in this article, especially in Section \ref{S:classification}, while Appendix \ref{A:symmetry} describes the set of all foliated linear maps between possibly different infinitesimal foliations as an algebraic variety.


\section{Algebras of basic polynomials}
\label{S:basic}

Let $(V,\F)$ be an \emph{infinitesimal foliation}, that is, a singular Riemannian foliation with closed leaves on the Euclidean space $V$, such that $0$ is a leaf. Denote by $\R[V]^\F$ the algebra of \emph{$\F$-basic} (or simply \emph{basic}) polynomials, that is, polynomials on $V$ which are constant on the leaves of $\F$. By the Homothetic Transformation Lemma (see \cite{Molino}, Lemma 6.2), $\R[V]^\F$ is a graded subalgebra of $\R[V]$, so that $\R[V]^\F=\oplus_{i=0}^\infty \R[V]^\F_i$ is the direct sum of the spaces $\R[V]^\F_i$ of degree $i$ homogeneous basic polynomials. Moreover $\R[V]^\F$ determines $\F$ in the sense that it separates leaves:

\begin{theorem}[Algebraicity, \cite{LytchakRadeschi15}]
\label{T:algebraicity}
Let $(V,\F)$ be an infinitesimal foliation. Then the algebra $\R[V]^\F$ of basic polynomials is finitely generated, and if $\rho_1, \ldots, \rho_N$ is a set of generators, then every leaf of $\F$ is of the form $\rho^{-1}(y_1, \ldots y_N)$ for some $(y_1,\ldots y_N)\in\R^N$, where $\rho=(\rho_1, \ldots , \rho_N):V\to \R^N$.
\end{theorem}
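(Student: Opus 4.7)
The plan is to prove two complementary statements: the algebraic one that $\R[V]^\F$ is finitely generated, and the geometric one that any finite set of homogeneous generators separates leaves as level sets of the tautological map $\rho:V\to\R^N$. I would proceed in three phases.

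\textbf{Phase 1 (Reynolds-type averaging).} I would first try to construct an $\R$-linear projection $R:\R[V]\to\R[V]^\F$ onto the subalgebra of basic polynomials, that preserves the grading, restricts to the identity on $\R[V]^\F$, and satisfies the Reynolds identity $R(f\cdot g)=f\cdot R(g)$ whenever $f$ is basic. The natural candidate is leaf-wise averaging: $R(p)(x)=\mathrm{vol}(L_x)^{-1}\int_{L_x}p\,d\mathrm{vol}$, which by the Homothetic Transformation Lemma is at least scale-invariant and so preserves the grading in a suitable sense. The essential point to verify is that averaging a polynomial yields a polynomial, which is equivalent to a form of leaf algebraicity (each leaf being a real algebraic subvariety whose integral of a polynomial against a natural measure is again polynomial in the basepoint).

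\textbf{Phase 2 (Finite generation, given $R$).} Once $R$ is in hand, the classical Hilbert argument goes through verbatim. Since $\R[V]$ is Noetherian as a graded ring, the ideal $J=\R[V]\cdot\R[V]^\F_+$ generated by positive-degree basic polynomials is finitely generated; choose homogeneous generators $\rho_1,\dots,\rho_N\in\R[V]^\F_+$. Given a basic $p$ of positive degree $d$, write $p=\sum q_i\rho_i$ with $\deg q_i<d$, apply $R$ to get $p=R(p)=\sum\rho_i R(q_i)$, and conclude by induction on $d$ that $p$ is a polynomial in $\rho_1,\dots,\rho_N$.

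\textbf{Phase 3 (Leaf separation).} Each leaf is contained in some fiber of $\rho=(\rho_1,\dots,\rho_N)$ since the $\rho_i$ are basic. For the converse, I would show that $\F$-invariant continuous functions on any compact $K\subset V$ can be uniformly approximated by basic polynomials: starting from an arbitrary smooth $\F$-invariant bump function separating two given leaves, approximate it in $C^0(K)$ by a polynomial via Stone--Weierstrass, and then apply $R$ to produce a basic polynomial distinguishing the two leaves. This separation property forces fibers of $\rho$ to coincide with leaves. An equivalent route is to note that fibers of $\rho$ are unions of leaves (as common level sets of \emph{all} basic polynomials, via Phase 2), and then establish separation directly.

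The main obstacle is \textbf{Phase 1}, specifically leaf algebraicity: there is no compact group acting transitively on leaves from which to integrate, so one must extract an algebraic-geometric structure on leaves purely from the local equidistance and smooth foliated structure. I expect this to require a careful analysis of the ideal of polynomials vanishing on a given leaf, using Noetherianity of formal power series rings and some form of holonomy or lifting to a regular ambient stratum; this is the content of \cite{LytchakRadeschi15}. Once leaf algebraicity is proven, Phases 2 and 3 are essentially formal consequences of classical commutative algebra together with Stone--Weierstrass.
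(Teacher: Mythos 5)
The paper imports this theorem from \cite{LytchakRadeschi15} without proof, offering only Remark~\ref{R:averaging} as a hint: the averaging operator $\Av$ preserves smooth functions, and hence homogeneous polynomials. Your reconstruction matches that blueprint exactly---$R=\Av$, the Hilbert--Noether finite-generation argument in Phase~2, and a Stone--Weierstrass-plus-averaging separation argument in Phase~3---and you correctly flag Phase~1 as the deep input to be taken from the cited reference. One calibration on Phase~1 is worth making: the route in \cite{LytchakRadeschi15} is not to first establish ``leaf algebraicity'' and then integrate polynomials over algebraic leaves. Rather, the hard theorem proved there is that $\Av$ maps $C^\infty$ to $C^\infty$; polynomial-preservation then follows formally, because by the Homothetic Transformation Lemma $\Av$ commutes with the scalar homotheties of $V$, so it sends a degree-$d$ homogeneous polynomial to a smooth function homogeneous of degree $d$, which must again be a polynomial. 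Algebraicity of individual leaves is thus a \emph{consequence} of the theorem, not an ingredient in its proof. This shifts the anticipated difficulty from the ideal-theoretic/holonomy analysis you predict to an analytic argument about the regularity of $\Av$, but it does not affect the logical soundness of your outline; your Phases~2 and~3 go through as written once $\Av$ is available with those properties.
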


\begin{remark}
\label{R:averaging}
The main tool in the proof of Theorem \ref{T:algebraicity} is the averaging operator $\Av:C^0(V)\to C^0(V)^\F$ defined by
\begin{equation*}
\Av(f)(x)=\frac{1}{\operatorname{vol}(L_x)}\int_{y\in L_x}f(y)dy
\end{equation*}
where $dy$ denotes the Riemannian volume form on the leaf $L_x$, and $\operatorname{vol}(L_x)$ the volume of $L_x$ with respect to $dy$. The key property of $\Av$ is that it preserves the space of smooth functions, and hence the space of homogeneous polynomials of degree $d$, see \cite{LytchakRadeschi15}. As in the homogeneous case, the usefulness of the averaging operator goes beyond the proof of Theorem \ref{T:algebraicity}. Indeed, it is used several times in this article. \end{remark}

\begin{remark}
\label{R:integrallyclosed}
In addition to averaging, another way of generating basic polynomials is to use the fact that $\R[V]^\F$ is integrally closed in $\R[V]$. That is, if $f\in\R[V]$ satisfies a monic polynomial equation $f^n+a_{n-1}f^{n-1}+\cdots+ a_0=0$ with coefficients $a_i\in\R[V]^\F$, then $f\in\R[V]^\F$. Indeed, the coefficients are constant on each leaf, and the leaves are connected, hence $f$ is also constant on each leaf.

As an illustration, consider the actions of $\OO(2)$ and $\SO(2)$ on the space of $2\times 2$ matrices
$\begin{pmatrix}
x & z \\
y & w
\end{pmatrix}$
by left multiplication. The polynomials $x^2+y^2,z^2+w^2,xz+yw$ are $\OO(2)$-invariant, hence $\SO(2)$-invariant (see Example \ref{E:standard})
. On the other hand, the determinant $xw-yz$ is only $\SO(2)$-invariant. It satisfies the monic equation $(xw-yz)^2=(x^2+y^2)(z^2+w^2)-(xz+yw)^2$. 
\end{remark}

\begin{definition}
\label{D:levelsets}
Let $V$ be a vector space and $\mathcal{P}\subset\R[V]$ an arbitrary set of polynomials on $V$. Define $\Le(\mathcal{P})$ to be the partition of $V$ into the  common level sets of $\mathcal{P}$, that is, the partition given by the equivalence relation 
\[ x\simeq y\ \Longleftrightarrow \ f(x)=f(y) \ \forall f\in\mathcal{P}.\]
\end{definition}

The Algebraicity Theorem implies that every infinitesimal foliation is of the form $\Le(\mathcal{P})$ for some finite $\mathcal{P}$. If $\F$ is the orbit decomposition of a $G$-action, a set $\mathcal{P}$ such that $\Le(\mathcal{P})=\F$ is called a ``separating set'', see \cite{Kemper09}. Note that the equivalence relation above is the zero set in $V\times V$ of  $\delta(\mathcal{P})$, where $\delta(f(x))=f(x)-f(y)$. This means that, by the real Nullstellensatz (see \cite[Theorem 4.1.4]{BCR}), $\Le(\mathcal{P})=\Le(\mathcal{P}')$ if and only if the ideals in $\R[V\times V]$ generated by $\delta(\mathcal{P})$ and $\delta(\mathcal{P}')$ have the same real radical, where  $\mathcal{P}, \mathcal{P}'\subset\R[V]$.

Since $\R[V]^\F$ determines $\F$, and is a graded algebra, it makes sense to explore the geometric meaning of each graded part $\R[V]^\F_i$. The degree one part corresponds to the trivial factors of $\F$, a result that is well-known in the homogeneous case (see \cite[page 75]{GorodskiLytchak14}):
\begin{proposition}[Trivial factors]
\label{P:trivial}
Let $(V,\F)$ be an infinitesimal foliation and let $SV$ denote the unit sphere in $V$. Then the following are equivalent:
\begin{enumerate}[a)]
\item $(V,\F)$ splits off a trivial factor.
\item $\R[V]^\F_1\neq 0$.
\item $\diam(SV/\F)=\pi$.
\item $\diam(SV/\F)>\pi/2$.
\end{enumerate}
\end{proposition}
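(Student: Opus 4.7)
The plan is to close the cycle $(a) \Rightarrow (b) \Rightarrow (c) \Rightarrow (d) \Rightarrow (b)$ together with a separate implication $(b) \Rightarrow (a)$. Of these, $(a) \Rightarrow (b)$ is immediate (the coordinate functions on the trivial summand are basic linear polynomials), and $(c) \Rightarrow (d)$ is trivial since $\pi > \pi/2$.

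For $(b) \Rightarrow (c)$ I would fix a nonzero $\ell \in \R[V]^\F_1$, write $\ell(x) = \langle v, x\rangle$ with $|v| = 1$, and note that every leaf of $\F$ lies in a sphere about $0$ (the homothetic transformation lemma forces $|x|^2 \in \R[V]^\F_2$) and, by basicness of $\ell$, in a level set of $\ell$. By Cauchy--Schwarz, $\{\ell = 1\} \cap SV = \{v\}$, so the leaf through $v$ is the singleton $\{v\}$, and symmetrically $\{-v\}$ is a leaf. Their antipodal spherical distance $\pi$ then forces $\diam(SV/\F) = \pi$.

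For $(d) \Rightarrow (b)$ I would exploit the averaging operator $\Av$ of Remark \ref{R:averaging}. Pick leaves $L_1, L_2 \In SV$ with $d(L_1, L_2) > \pi/2$; then $\langle x_1, x_2 \rangle < 0$ for all $x_i \in L_i$. Integrating over $L_1$, the centroid $c_1 = \operatorname{vol}(L_1)^{-1} \int_{L_1} y \, dy$ satisfies $\langle c_1, x_2\rangle < 0$, so $c_1 \neq 0$. The linear polynomial $f(x) = \langle c_1, x\rangle$ then has $\Av(f) \in \R[V]^\F_1$ (averaging preserves degree), and evaluating on any $x \in L_1$ gives $\Av(f)(x) = |c_1|^2 > 0$, so $\Av(f)$ is nonzero.

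The main obstacle is $(b) \Rightarrow (a)$. Let $W = \{w \in V : \langle w, \cdot\rangle \in \R[V]^\F_1\}$, the linear subspace dual to the basic linear functionals. The singleton-leaf argument of $(b) \Rightarrow (c)$, combined with the homothetic transformation lemma (which scales the singleton leaf on $SV$ to singleton leaves on every sphere), shows that every point of $W$ is itself a leaf, hence each leaf of $\F$ is confined to an affine slice $w_0 + W^\perp$. To upgrade this to the desired splitting $(V, \F) = (W, \{\mathrm{pts}\}) \times (W^\perp, \F|_{W^\perp})$ I would invoke the standard principle that the gradient of a basic function is a foliated vector field: for each $w \in W$, the constant field $\nabla\langle w, \cdot\rangle \equiv w$ is horizontal, so its flow, translation by $tw$, preserves $\F$. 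Then $W^\perp$ is an $\F$-invariant subspace through $0$, the restriction $\F|_{W^\perp}$ is an infinitesimal foliation, and the $W$-translation invariance exhibits $\F$ as the claimed product, so $(V, \F)$ splits off the trivial factor $W$.
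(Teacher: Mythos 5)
Your cycle $(a)\Rightarrow(b)\Rightarrow(c)\Rightarrow(d)\Rightarrow(b)$ plus $(b)\Rightarrow(a)$ matches the paper's logical structure exactly, and the arguments for $(a)\Rightarrow(b)$, $(b)\Rightarrow(c)$, $(c)\Rightarrow(d)$, and $(d)\Rightarrow(b)$ are all correct and essentially identical to the paper's (in $(b)\Rightarrow(c)$ you argue via Cauchy--Schwarz where the paper intersects two saturated sets, and in $(d)\Rightarrow(b)$ you average $\langle c_1,\cdot\rangle$ and evaluate on $L_1$ where the paper averages $\langle\cdot,x\rangle$ and evaluates at $y$ --- minor cosmetic differences only).

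The one place where your proposal genuinely diverges from, and falls short of, the paper is $(b)\Rightarrow(a)$: you invoke ``the standard principle that the gradient of a basic function is a foliated vector field'' to conclude that translation by $w\in W$ preserves $\F$. That principle is indeed standard for \emph{regular} Riemannian foliations (a basic gradient is projectable under local Riemannian submersions), but for \emph{singular} Riemannian foliations it is not a citable off-the-shelf fact: extending the conclusion across the singular strata is precisely the nontrivial content, and the claim as stated needs justification. The paper instead gives a direct, elementary argument: for a leaf $L\subset V_0=\ker\lambda$, the translate $yu+L$ is the intersection of the saturated hyperplane $\lambda^{-1}(y)$ with the saturated tube $\{d(\cdot,L)=|y|\}$, hence a union of leaves, and is then shown to be a single leaf by the reversed-roles symmetry. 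This handles the singular leaves with no appeal to any general gradient-flow principle. Your approach can be salvaged in the spirit of this paper by the transnormal-algebra observation: since $\langle w,\cdot\rangle$ is basic, $D_w f=\langle\nabla f,\nabla\langle w,\cdot\rangle\rangle$ is basic whenever $f$ is (Definition \ref{D:transnormal}), so the Taylor expansion of $T_t^*f=f(\,\cdot+tw)$ has basic coefficients, $T_t^*$ preserves $\R[V]^\F$, and hence $T_t$ maps leaves onto leaves by Algebraicity; but this argument needs to be spelled out rather than left as an appeal to a ``standard principle.'' As written, this step is a gap.
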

\begin{proof}
(a)$\Rightarrow$(b) If $(V,\F)=(V_0\oplus V_1,\F_0\times\text{trivial})$, then any linear functional whose kernel contains $V_0$ is basic.

(b)$\Rightarrow$(a) Let $\lambda\in\R[V]^\F_1$ be non-zero. We may assume $\lambda(x)=\left< x, u\right>$ for some unit vector $u\in V$. Then $V_0=\ker(\lambda)=u^\perp$ is a union of leaves, hence $(V_0,\F|_{V_0})$ is an infinitesimal foliation, and the same applies to the other level sets $\lambda^{-1}(y)$, for $y\in\R$. We show that the restriction of the foliation $\F$ to the level sets are translations of $(V_0,\F|_{V_0})$. Indeed, if $L\subset V_0$ is a leaf, then $yu+L$ is the intersection of $\lambda^{-1}(y)$ with the set of all points at distance $|y|$ from $L$, and therefore $yu+L$ is a union of leaves. By reversing the roles of $0$ and $y$, it follows that $yu+L$ is actually a single leaf. Therefore $(V,\F)=(V_0,\F|_{V_0})\times(\R,\text{trivial})$.

(b)$\Rightarrow$(c) Let $\lambda:V\to\R$ be a non-zero basic linear functional. If $v\in V$ is a vector normal to $\ker(\lambda)$, then $\{v\}$ is a leaf, because it is the intersection of the unit sphere with the hyperplane $\lambda^{-1}(\lambda(v))$, both of which are union of leaves. Similarly $\{-v\}$ is a leaf, and these two point leaves are at distance $\pi$ from each other on the sphere $SV$, so that $\diam(SV/\F)\geq \pi$. On the other hand $SV/\F$ is the base of the submetry from $SV$,  which implies $\diam(SV/\F)\leq \pi$.

(c)$\Rightarrow$(d) Obvious.

(d)$\Rightarrow$(b) We use the averaging operator, see Remark \ref{R:averaging}. Let $x,y\in SV$ such that $d(L_x,L_y)>\pi/2$ and consider $f=\Av(\left<\cdot,x\right>)$. Then $f(y)<0$, so that $f$ is a non-zero basic linear functional.
\end{proof}

Most of the present article is devoted to the study of the next case, namely degree-two basic polynomials. To this end, the main structure of $\R[V]^\F$ that we exploit is that it is a \emph{transnormal algebra}, and in particular that the degree-two part is a (special, formally real) Jordan algebra. 
\begin{definition}
\label{D:transnormal}
Let $(V,\left<\ ,\right>)$ be a Euclidean vector space. A subalgebra of $\R[V]$ is called  \emph{transnormal} if it is closed under the \emph{transnormal product} 
$$ (f,g) \mapsto \left<\nabla f,\nabla g\right>$$
\end{definition}

In fact one has a transnormal product for smooth functions on an arbitrary \srf\ $(M,\F)$:
\begin{proposition}
Let $(M,\F)$ be a \srf, and $f,g\in C^\infty(M)$ be  basic functions. Then $h=\left<\nabla f,\nabla g\right>$ is a basic function.
\end{proposition}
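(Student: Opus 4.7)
The plan is to polarize, reduce to a statement about a single basic function, handle the regular stratum by classical Riemannian foliation arguments, and then extend to singular leaves via the Slice Theorem.

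First, using the identity
\[
\langle \nabla f, \nabla g\rangle = \tfrac12\bigl(|\nabla(f+g)|^2 - |\nabla f|^2 - |\nabla g|^2\bigr),
\]
and the fact that smooth basic functions form an $\R$-algebra closed under sums, it suffices to show that $|\nabla f|^2 \in C^\infty(M)$ is basic whenever $f$ is. Equivalently, for each leaf $L$ of $\F$, $|\nabla f|^2$ must be constant along $L$.

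Second, restrict to the regular stratum $M_{\mathrm{reg}}$, which is open and dense in $M$ and on which $\F$ is a genuine Riemannian foliation (in the classical Reinhart sense) with bundle-like metric. Since $f$ is basic, $df$ vanishes on vectors tangent to leaves, so $\nabla f$ takes values in the horizontal distribution. Basicness of $f$ also implies that $df$ descends to a well-defined $1$-form on any local transversal slice, and the bundle-like condition says that the horizontal metric is invariant under foliated holonomy. Together these mean that $\nabla f$ is a foliate (projectable) horizontal vector field, i.e.\ its values at two points of the same regular leaf are related by the holonomy along any path connecting them; as holonomy preserves the horizontal inner product, $|\nabla f|^2$ is constant on every regular leaf. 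Thus $|\nabla f|^2$ is basic on $M_{\mathrm{reg}}$.

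The main obstacle is to propagate this to singular leaves, since a singular leaf $L_0$ consists entirely of singular points and is therefore not directly reached by the above argument. To overcome this, I would invoke the Slice Theorem of \cite{MendesRadeschi15}: near any $p\in L_0$ the foliation $(M,\F)$ is foliatedly diffeomorphic to a twisted product of $L_0$ with an infinitesimal slice foliation, with all metric data transverse to $L_0$ varying smoothly along $L_0$. Given $p,q\in L_0$, I would join them by a smooth curve $\gamma\subset L_0$ and cover $\gamma$ by finitely many such tubular charts. In each chart, regular points are dense and, by the previous step, $|\nabla f|^2$ is already constant on every regular leaf; since $|\nabla f|^2$ is continuous on $M$ and the slice bundle identifies the transverse foliations consistently along $\gamma$, the value $|\nabla f|^2(\gamma(t))$ is independent of $t$. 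Chaining these local constancy statements along $\gamma$ yields $|\nabla f|^2(p)=|\nabla f|^2(q)$, completing the proof.
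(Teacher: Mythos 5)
Your polarization step and regular-stratum argument are fine, and on the regular part they match the paper's proof in substance: you phrase constancy of $|\nabla f|^2$ on regular leaves via projectability of $\nabla f$ and holonomy-invariance of the transverse metric, while the paper uses the equivalent local Riemannian-submersion picture ($h|_U = \langle\nabla\bar f,\nabla\bar g\rangle\circ\pi$). Where the proposal goes wrong is in the passage to singular leaves.

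The paper closes the argument in one line, without the Slice Theorem: $h$ is basic if and only if $X(h)\equiv 0$ for every smooth \emph{vertical} vector field $X$ (by definition of a singular foliation, such fields span the tangent space of every leaf, singular or not, and leaves are connected); $X(h)$ is a continuous function that vanishes on the dense regular stratum, hence vanishes identically. Your detour through the Slice Theorem is both a much heavier tool than needed and leaves a genuine gap: the assertion that "the slice bundle identifies the transverse foliations consistently along $\gamma$" and therefore $|\nabla f|^2(\gamma(t))$ is independent of $t$ is stated, not proved. For the continuity argument to yield $|\nabla f|^2(\gamma(t_0))=|\nabla f|^2(\gamma(t_1))$, you need to produce a \emph{single} regular leaf approaching $L_0$ that simultaneously comes close to both $\gamma(t_0)$ and $\gamma(t_1)$; merely knowing that regular points are dense and that $|\nabla f|^2$ is constant on each regular leaf does not do this, since different regular points near $\gamma(t_0)$ and $\gamma(t_1)$ may lie on different leaves with unrelated values. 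That nearby regular leaves project onto (all of) the relevant arc of $L_0$ is itself a nontrivial structural fact, and your proof does not establish it. Replacing the entire singular-leaf discussion with the observation that $X(h)$ is continuous and vanishes on a dense set makes the proof complete, shorter, and independent of the Slice Theorem.
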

\begin{proof}
First note that it is enough to show that $h$ is basic on the (dense) regular part. Indeed, $h$ is basic if and only if $X(h)$ vanishes identically for every smooth vertical vector field $X$.
Let $L$ be a regular leaf, and $p\in L$. Choose a simple neighbourhood $U$ of $p$, so that $\F|_U$ is given by the fibers of a Riemannian submersion $\pi:U\to\bar{U}$. Then $h|_U$ is $\F|_U$-basic, because  $h|_U=\left<\nabla\bar{f},\nabla\bar{g}\right>\circ\pi$, where $\bar{f},\bar{g}\in C^\infty(\bar{U})$ are such that $f|_U=\bar{f}\circ\pi$ and $g|_U=\bar{g}\circ\pi$. Covering $L$ with such neighbourhoods $U$, we conclude that $h$ is basic on the arbitrary regular leaf $L$, hence everywhere.  
\end{proof}
When the leaves of $(M,\F)$ have basic mean curvature vector fields, in particular for infinitesimal foliations, $C^\infty(M)^\F$ is invariant under the Laplacian operator \cite[Lemma 3.2]{LytchakRadeschi15}. This is stronger than being a transnormal algebra, because  
\[\left<\nabla f,\nabla g\right>=\frac{\Delta(fg)-\Delta(f)g-g\Delta(f)}{2}.\]

\section{Classification via Jordan algebras}
\label{S:classification}
In this section, we make the connection between Jordan algebras and quadratic basic polynomials, and prove Theorems \ref{MT:srf} and \ref{MT:classification}. See Appendix \ref{A:jordan} for the facts about Jordan algebras that will be used below.

Let $(V,\F)$ be an infinitesimal foliation, and denote by $J=\R[V]^\F_2$ the space of quadratic basic polynomials on $V$. The space of all quadratic polynomials $\R[V]_2$ is isomorphic to the space  $\Sym^2(V)$ of symmetric endomorphisms of $V$ via the map $f\mapsto \Hess(f)/2$. Composing this with the inclusion $\R[V]^\F_2\to \R[V]_2$, we define the injective linear map $\eta$ by
 \[ \eta : J \to \Sym^2(V) \qquad f\mapsto \Hess(f)/2.\]

The transnormal product (see Definition \ref{D:transnormal}) on the algebra of basic polynomials given by $(f,g)\mapsto \left<\nabla f,\nabla g\right>$ leaves invariant the subspace $J=\R[V]^\F_2$.  We claim that $J$ with this product is a Jordan algebra, and that the map $\eta$ is a homomorphism of Jordan algebras, where $\Sym^2(V)$ is endowed with the standard Jordan product, namely the symmetrization of composition $A\bullet B=(AB+BA)/2$. 

Indeed, if $f,g\in J$ and $A=\eta(f)$, $B=\eta(g)$, then 
\[  f(x)=\left< x,Ax\right>,\ g(x)=\left< x,Bx\right>. \]
In particular, $\left<\nabla f,\nabla g\right>(x)=\left< x,ABx\right>=\left<x,Cx\right>$, where $C\in\Sym^2(V)$ is given by
\[ C=\frac{AB+(AB)^T}{2}=\frac{AB+BA}{2}. \]

Before proving Theorem \ref{MT:classification}, we describe standard diagonal foliations and Clifford foliations, which are the building blocks in the classification. In particular, we describe the Jordan algebra $J$ and the embedding $\eta:J\to \Sym^2(V)$ for each of these foliations.

\begin{example}[Standard diagonal representations]
\label{E:standard}
Let $k,n\geq 1$ be integers, and $\K=\R,\C$, or $\HH$.
Consider $G=O(k)$ (respectively $U(k),Sp(k)$) with its natural action on $V=\K^k$. We call \emph{standard diagonal representations} the diagonal action of $G$ on $n$ copies of $\K^k$. It is a classical result that its algebra of invariants is generated in degree two, and that the degree-two invariants $J$ are in bijective correspondence with the set $H_n(\K)$ of $n\times n$ Hermitian matrices  with entries $A_{ij}\in\K$ via 
$$ (v_1,\ldots v_n)\mapsto \sum_{i,j} A_{ij} v_i \bar{v_j}.$$
More explicitly, if $e_1, \ldots e_k$ denotes the standard basis of $\K^k$, then in the basis
\[
(e_1, 0, \ldots 0), (e_2,0,\ldots ,0), \ldots, (e_k, 0,\ldots, 0), (0,e_1, 0, \ldots 0), (0, e_2,0,\ldots ,0), \ldots
\]
of $V$, the Hermitian matrix $A=(a_{ij})$ corresponds to the quadratic form on $V$ associated to the $kn\times kn$ matrix 
\[
A\otimes I_k=\begin{pmatrix}
a_{11} I_k & a_{12} I_k & \ldots \\
\vdots & & \ddots
\end{pmatrix}
\]
where $I_k$ denotes the $k\times k$ identity matrix. That is, the embedding $\eta:J\to \Sym^2(V)$ equals the composition of  $\otimes I_k:J=H_n(\K)\to H_{kn}(\K)$ with the inclusion $H_{kn}(\K)\subset \Sym^2(V)$.

The complex version of this result was proved by Weyl under the name of First Fundamental Theorem of $\operatorname{GL}(n,\C)$, $\OO(n,\C)$ and $\Sp(n,\C)$, see Section \ref{S:fft} for more details.
\end{example}

\begin{example}[Clifford foliations]
\label{E:Clifford}
Clifford foliations form a class of mostly inhomogeneous infinitesimal foliations, introduced in \cite{Radeschi14}. Recall that a Clifford system $C=(P_0,\ldots P_m)$ on the vector space $V=\R^{2l}$ is a set of symmetric endomorphisms of $\R^{2l}$ such that $(P_iP_j+P_jP_i)/2=\delta_{ij}I$ for every $0\leq i, j\leq m$. The partition $\F_C=\Le(|x|^2,\left<P_0x,x\right>,\ldots \left<P_mx,x\right>)$ of $\R^{2l}$ is called the \emph{Clifford foliation} associated to $C$. Here $x=(x_1,\ldots, x_{2l})$ are the standard coordinates on $\R^{2l}$. If $\F_C$ has connected leaves, then it is an infinitesimal foliation. The only cases where $\F_C$ does not have connected leaves are $C=C_{1,1},C_{1,2},C_{3,1},C_{7,1}$ (see Theorem A(1) and table on page 1665 in \cite{Radeschi14}). If $C=C_{1,1}$, then the leaves of $\F_C$ are pairs of antipodal points. If $C=C_{1,2},C_{3,1},C_{7,1}$, taking connected components of the leaves of $\F_C$ one obtains other Clifford foliations, namely the ones  associated to $C_{2,1},C_{4,1},C_{8,1}$, see \cite[Proposition 2.4]{Radeschi14}.

The Jordan algebra $J=\R[V]^\F_2$ is spanned by $|x|^2$ and $\left<P_i x,x\right>$ for $i=0,\ldots, m$. It  is isomorphic to the Spin factor $\mathcal{J}\Spin _{m+1}\subset \Cl(\R^{m+1},q)$ associated to the quadratic form $q$   given by the negative of the standard squared norm  on $\R^{m+1}$. The embedding $\eta$ is the restriction to $\mathcal{J}\Spin _{m+1}$ of the representation of the Clifford algebra $\Cl(\R^{m+1},q)$ on $V$ determined by the Clifford system $C$.

In Section \ref{S:fft} we establish the ``First Fundamental Theorem'' for Clifford foliations, namely, that the algebra of $\F_C$-basic polynomials is generated by the quadratic polynomials $|x|^2,\left<P_0x,x\right>,\ldots \left<P_mx,x\right>$.
\end{example}

\begin{proof}[Proof of Theorem \ref{MT:classification}]

Since the Jordan algebra $J=\R[V]^\F_2$ embeds in $\Sym^2(V)$, it is a special, formally real Jordan algebra.
By Theorems \ref{T:Albert} and \ref{T:JvNW}, $J$ is the direct sum of ideals $J=J_1\oplus\cdots\oplus J_r$, each of which isomorphic to $H_n(\K)$ for $\K=\R,\C,\HH$ or $\mathcal{J}\Spin_{m+1}$.

The inclusion of Jordan algebras $\eta: J\to \Sym^2(V)$ extends uniquely to a morphism of associative algebras $\mathcal{U}(J)\to\End(V)$, where $\mathcal{U}(J)$ denotes the universal enveloping algebra of $J$. By Proposition \ref{P:enveloping}(a), we have $\mathcal{U}=\mathcal{U}(J_1)\oplus\cdots\oplus\mathcal{U}(J_r)$, and therefore also $V=V_1\oplus\cdots\oplus V_r$ (see \cite[page 653]{Lang}). With respect to this decomposition of $V$, the inclusion $J=J_1\oplus\cdots\oplus J_r\to \Sym^2(V)$ is block diagonal.
The partition $\F_2=\Le(J)$ of $V$ decomposes accordingly as a product $\Le(J_1)\times\cdots\times\Le(J_r)$.

If $J_i$ is a spin factor $\mathcal{J}\Spin_{m+1}$, a basis of $\R^{m+1}\subset J_i$ is mapped to a \emph{Clifford system} (see \cite{Radeschi14}), and therefore $\Le(J_i)$ is a Clifford foliation, see Example \ref{E:Clifford}. This also covers the case $J_i=H_2(\HH)$, because $H_2(\HH)$ is isomorphic to the spin factor $\mathcal{J}\Spin_5$.

Now consider the remaining cases $J_i=H_n(\K)$ for $\K=\R,\C,\HH$, and $(n,\K)\neq (2,\HH)$. By Proposition \ref{P:enveloping}(b), $\mathcal{U}=\K^{n\times n}$. This associative algebra is simple, and its unique irreducible representation is the standard action on $\K^n$ (see \cite[page 653]{Lang}). Therefore $V_i$ is isomorphic as a real vector space to $\K^{nk}$, such that the inclusion $J_i\to\Sym^2(V_i)$ is given by taking the tensor product with the $k\times k$ identity matrix. Thus $\Le(J_i)$ coincides with the orbit decomposition of a standard diagonal representation, see Example \ref{E:standard}.
\end{proof}

\begin{proof}[Proof of Theorem \ref{MT:srf}]
By Theorem \ref{MT:classification}, we have a decomposition $V=V_1\oplus\cdots\oplus V_r$, and $\F_2$ decomposes accordingly as a product of (the orbit decompositions of) standard diagonal representations and Clifford foliations. 

Taking connected components of $\F_2$ corresponds to taking connected components in each factor. In the case of homogeneous foliations, taking connected components simply means considering the orbits of the sub-action by the identity component of the group. For the Clifford foliations, as mentioned in Example \ref{E:Clifford}, taking connected components always gives rise to a (possibly different) Clifford foliation, hence an infinitesimal foliation.

%
%
\end{proof}

When $\F$ is homogeneous, $\F_2$ is again homogeneous, and can be described using Schur's lemma. Recall that irreducible representations of compact groups $G$ on real vector spaces  $V$ are partitioned into three types: real, complex, and quaternionic. They are characterized by the fact that $V$ has type $\K$ if and only if its algebra of $G$-equivariant endomorphisms is isomorphic to $\K$, where $\K=\R$, $\C$, or $\HH$. For definitions and properties, see \cite{BrockertomDieck} section 2.6, in particular Table and Definitions (6.2) and Theorem 6.7.
\begin{proposition}[Schur's Lemma]
\label{P:homogeneous}
Let $(V,\F)$ be given by the orbits of a representation of the compact connected Lie group $G$, and  let $(V, \F_2)$ be as above. Then the indecomposable factors of $\F_2$ coincide with the isotypical components of the $G$-representation.

Moreover, for each isotypical component made up of $n$ copies of an irreducible representation $W$, the corresponding indecomposable factor of $\F_2$ is given by a standard diagonal representation on $n$ copies of $\K^k$ for some $k$, where $\K=\R,\C,\HH$ according to the type of $W$. 
\end{proposition}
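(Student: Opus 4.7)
The plan is to combine the argument in the proof of Theorem \ref{MT:classification} with Schur's lemma for compact groups. The starting observation is that in the homogeneous case basic polynomials coincide with $G$-invariant ones, so $\eta$ identifies $J=\R[V]^\F_2$ with the space $\Sym^2(V)^G$ of $G$-invariant self-adjoint endomorphisms of $V$, equipped with the symmetrized composition product.

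Next, I would decompose $V=V_1\oplus\cdots\oplus V_r$ into isotypical components, writing $V_i\cong W_i^{n_i}$ with $W_i$ pairwise non-isomorphic irreducibles of type $\K_i\in\{\R,\C,\HH\}$ and $k_i=\dim_{\K_i}W_i$. By Schur's lemma, $\Hom_G(V_i,V_j)=0$ for $i\neq j$, hence every $G$-equivariant endomorphism is block-diagonal with respect to this splitting, yielding $J=\bigoplus J_i$ with $J_i=\Sym^2(V_i)^G$. A second application of Schur gives $\End_G(V_i)\cong M_{n_i}(\K_i)$, under which the self-adjoint elements correspond to the Hermitian matrices $H_{n_i}(\K_i)$; the resulting identification $J_i\cong H_{n_i}(\K_i)$ is an isomorphism of Jordan algebras, and in a suitable basis of $V_i\cong\K_i^{n_ik_i}$ the embedding $\eta|_{J_i}$ is exactly the map $A\mapsto A\otimes I_{k_i}$ from Example \ref{E:standard}.

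At this point the argument in the proof of Theorem \ref{MT:classification} applies directly: $\F_2=\Le(J)$ decomposes as $\prod_i\Le(J_i)$ along $V=\bigoplus V_i$, and each $\Le(J_i)$ is the orbit decomposition of the standard diagonal representation of $\OO(k_i),\U(k_i),\Sp(k_i)$ on $n_i$ copies of $\K_i^{k_i}$. In particular $\F_2$ is again homogeneous, and its indecomposable factors correspond bijectively to the isotypical components of $V$ as a $G$-representation, which is precisely the content of the proposition.

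The main subtlety I anticipate lies in verifying the identification $J_i\cong H_{n_i}(\K_i)$ as Jordan algebras in the complex and quaternionic cases: one has to choose a real structure compatible with the chosen $G$-invariant inner product and check that the transnormal product on $J_i$ matches the symmetrized product on Hermitian matrices. A minor point to flag is the coincidence $H_2(\HH)\cong\mathcal{J}\Spin_5$, which a priori allows two classifications of the same factor as either quaternionic diagonal or Clifford; but the two resulting foliations agree, so no ambiguity arises.
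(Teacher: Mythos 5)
Your proposal is correct and follows essentially the same route as the paper's proof: identify $J$ with $\Sym^2(V)^G$, apply Schur's lemma to get block-diagonality along isotypical components, and then invoke the structure theorem for $\End_G$ of an isotypical piece (the paper cites Br\"ocker--tom Dieck, Theorem 6.7) to obtain $J_i\cong H_{n_i}(\K_i)$ with $\eta$ given by $A\mapsto A\otimes I_{k_i}$. The extra care you flag about the Jordan-algebra structure matching in the $\C$ and $\HH$ cases, and the $H_2(\HH)\cong\mathcal{J}\Spin_5$ coincidence, is sound but not part of the paper's (shorter) argument.
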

\begin{proof}
Let $\phi:V\to V$ be a $G$-equivariant endomorphism. By Schur's lemma, $\phi$ is block diagonal with respect to the decomposition of $V$ into isotypical components.
Given an isotypical component of the form $W^{\oplus n}$, where $W$ is irreducible, it follows from \cite[Theorem 6.7]{BrockertomDieck} that $\phi$ is given by tensoring a matrix in $\GL(n,\K)$ with $I_k$, where $\K$ is the type of $W$. In particular, the space of symmetric $G$-equivariant endomorphisms is isomorphic to $H_n(\K)$. Since this space corresponds to $\R[V]^G_2$, the restriction of $\F_2$ to this isotypical component is defined by the standard diagonal representation on $(\K^k)^n$, for some $k$.
%
\end{proof}

As an application of Proposition \ref{P:homogeneous}, if a representation has algebra of invariants generated in degree two, then it is orbit-equivalent to a standard diagonal representation.

\section{Invariant subspaces}
\label{S:invariant}
In this section we extend a few definitions from Representation Theory to the realm of infinitesimal foliations. We start with invariant subspaces, and show that they correspond to the idempotents in the associated Jordan algebra, leading to the proof of Theorem \ref{MT:characterization} from the Introduction. We also extend the notions of isotypical components and type, and give a description of the moduli space of invariant subspaces.

\begin{definition}[Invariant subspaces]
Let $(V,\F)$ be an infinitesimal foliation. A subspace $W\subset V$ is called ($\F$-)\emph{invariant} if it is a union of leaves. The foliation $(V,\F)$ is called \emph{irreducible} if the only invariant subspaces are $\{0\}$ and $V$.
\end{definition}
These coincide with the usual definitions when $\F$ is homogeneous.
One way of generating invariant subspaces is to take the span of a set of leaves:
\begin{lemma}
\label{L:span}
Let $(V,\F)$ be an infinitesimal foliation, and let $X\subset V$ be a union of leaves (that is, a saturated set). Then $W=\Span(X)$ is an invariant subspace.
\end{lemma}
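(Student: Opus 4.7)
The plan is to produce a single nonnegative basic quadratic polynomial whose zero set is exactly $W$, after which invariance follows by a pointwise averaging argument. The crux is the linear-algebraic observation that the zero set of a nonnegative quadratic polynomial on $V$ is automatically a linear subspace, so vanishing on $X$ upgrades for free to vanishing on $\Span(X) = W$.

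Concretely, I consider $q(x) := |\pi_{W^\perp}(x)|^2$, the squared distance from $W$: a nonnegative quadratic polynomial with $q^{-1}(0) = W$. Applying the averaging operator from Remark \ref{R:averaging} produces $\tilde q := \Av(q) \in \R[V]^\F_2$, which is still a degree-two homogeneous polynomial (by the property of $\Av$ recorded there) and still pointwise nonnegative. For any $p \in X$ the leaf $L_p$ lies inside $X \subset W$, so $q|_{L_p} \equiv 0$ and hence $\tilde q(p) = 0$. Writing $\tilde q(x) = \langle x, Bx\rangle$ with $B$ positive semidefinite, one has $\{\tilde q = 0\} = \ker B$, a linear subspace containing $X$ and hence containing $W = \Span(X)$.

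To close the loop, fix $p \in W$. Then $\tilde q(p) = 0$ is, up to a positive constant, the integral of the nonnegative continuous function $q$ over the leaf $L_p$, which forces $q \equiv 0$ on $L_p$, i.e.\ $L_p \subset W$. Thus every leaf meeting $W$ is contained in $W$, which is exactly the invariance claim.

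The step that carries the actual content is the passage from ``$\tilde q$ vanishes on $X$'' to ``$\tilde q$ vanishes on $\Span(X)$'', for which positive semidefiniteness of the Hessian is essential: the analogous argument with basic linear functionals would only show that $W$ is contained in some basic subspace, not that $W$ itself is a union of leaves. Everything else is routine given the averaging machinery from Remark \ref{R:averaging}.
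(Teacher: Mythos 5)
Your proof is correct and follows essentially the same approach as the paper's: both average the squared-distance-to-$W$ function via $\Av$ to obtain a nonnegative basic quadratic form, use positive semidefiniteness to conclude its zero set is a linear subspace containing $\Span(X)=W$, and then use nonnegativity of the integrand once more to deduce that leaves through points of $W$ stay inside $W$. The only cosmetic difference is that the paper phrases the final step by identifying $W$ as the exact zero set of the basic polynomial (hence a union of leaves), whereas you argue leafwise directly.
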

\begin{proof}
We use the averaging operator, see Remark \ref{R:averaging}. More precisely, let $W=\Span(X)$ and $f=\Av(d(\cdot, W)^2)$. Note that $f$ is a basic homogeneous degree $2$ polynomial, and that $f\geq 0$. Moreover, $f$ vanishes identically on $X$, because given $x\in X$, the leaf $L_x$ is contained in $W$, and hence $f(x)=\int_{y\in L_x}d(y,W)^2=0$. Since $f\geq 0$, the zero set of $f$ is a linear subspace, and hence $f$ vanishes on $W$. 

On the other hand, if $x\in V\setminus W$, then $f(x)=\int_{y\in L_x}d(y,W)^2>0$, because  the integrand is non-negative, and positive at $x$. Therefore $W$ equals the zero set of $f$, and is hence an invariant subspace. \end{proof}
In particular, if $W,W'$ are invariant subspaces, then $W\oplus W'=\Span(W\cup W')$ is also invariant. 

Next, we show that the invariant subspaces both determine and are determined by the degree-two basic polynomials, in the following sense:
\begin{lemma}[Idempotents]
\label{L:idempotents}
Let $(V,\F)$ be an infinitesimal foliation, and let $J=\R[V]^\F_2$ be the Jordan algebra of degree-two homogeneous basic polynomials. Then 
\begin{enumerate}[a)]
\item There is a bijective correspondence between idempotents in $J$ and $\F$-invariant subspaces, given by
\begin{equation*} f\in J \longmapsto f^{-1}(0)= \{v\in V\ |\ f(v)=0\}. \end{equation*}
\item For any $f\in\R[V]^\F_2$, the eigenspaces of $\eta(f)=\Hess(f)/2$ are invariant subspaces, and $f$ is a linear combination of idempotents.
\end{enumerate}
\end{lemma}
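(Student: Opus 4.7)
The plan is to exploit the Jordan algebra homomorphism $\eta\colon J \to \Sym^2(V)$ established in this section. An element $f\in J$ is a Jordan idempotent if and only if $\eta(f)$ is a Jordan idempotent in $\Sym^2(V)$; since $\eta(f)$ is symmetric, this forces $\eta(f)$ to be the orthogonal projection onto some subspace, whence $f^{-1}(0) = \ker\eta(f)$ is automatically a linear subspace.

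For part (a), the forward direction is then immediate: given an idempotent $f \in J$, $f^{-1}(0)$ is linear by the remark above and is a union of leaves since $f$ is basic. For the reverse direction, given an $\F$-invariant subspace $W$, I set $f(v) = d(v,W)^2 = |\pi_{W^\perp}(v)|^2$, so that $\eta(f) = \pi_{W^\perp}$ is an orthogonal projection; this makes $f$ automatically a Jordan idempotent as soon as one knows $f\in J$. The substantive step is verifying that $f$ is basic, and this uses the equidistance of leaves in a singular Riemannian foliation: since $W$ is a union of leaves, $d(v,W) = \inf_{L'\subset W} d(L_v, L')$ depends only on the leaf $L_v$. The two assignments are mutually inverse because any symmetric idempotent is the orthogonal projection onto the orthogonal complement of its kernel, which forces $\eta(f) = \pi_{f^{-1}(0)^\perp}$ and therefore $f(v) = d(v, f^{-1}(0))^2$.

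For part (b), I invoke the spectral theorem for formally real Jordan algebras (Appendix \ref{A:jordan}). The algebra $J$ is formally real, since it embeds in $\Sym^2(V)$, and it has a unit, namely $|v|^2$, which lies in $J$ because $\{0\}$ is a leaf of $\F$ and hence, by equidistance of leaves, every leaf is contained in a sphere centered at $0$. Thus any $f\in J$ admits a spectral decomposition $f = \sum_i \lambda_i e_i$ with distinct real $\lambda_i$ and pairwise orthogonal idempotents $e_i \in J$; applying $\eta$ recovers the usual spectral decomposition of the symmetric endomorphism $\eta(f)$, so each $\lambda_i$-eigenspace of $\eta(f)$ equals the image of $\eta(e_i)$, equivalently the zero set of the complementary idempotent $|v|^2 - e_i \in J$, and is therefore $\F$-invariant by part (a). The equality $f = \sum_i \lambda_i e_i$ is then the required expression of $f$ as a linear combination of idempotents.

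The hard part throughout is the interface between the algebraic picture and the foliation: both the basicness of $d(\cdot,W)^2$ and the fact that $|v|^2\in J$ rest on the equidistance of leaves together with $\{0\}$ being a leaf of an infinitesimal foliation. Once these two geometric inputs are in place, the rest reduces to formal manipulations with the Jordan homomorphism $\eta$ and the spectral theorem in a formally real Jordan algebra with unit.
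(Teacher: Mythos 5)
For part (a), your argument is essentially the paper's: you show an idempotent $f\in J$ corresponds to the orthogonal projection $\eta(f)$ (so $f^{-1}(0)=\ker\eta(f)$ is linear and a union of leaves), and conversely that $d(\cdot,W)^2$ for an invariant subspace $W$ is a basic idempotent in $J$. Your extra justification of basicness of $d(\cdot,W)^2$ via equidistance of leaves is a reasonable elaboration of a step the paper leaves implicit.

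For part (b), you take a genuinely different route. The paper proceeds by induction on the number $s$ of eigenvalues of $\eta(f)$: if $\lambda_1<\cdots<\lambda_s$, then $f-\lambda_1|x|^2$ is a non-negative basic quadratic whose zero set is exactly the $\lambda_1$-eigenspace $W_1$, so $W_1$ is invariant, and restricting $f$ to $W_1^\perp$ handles the remaining eigenspaces; the expression $f=\sum_i\lambda_i\,d(\cdot,W_i^\perp)^2$ then exhibits $f$ as a linear combination of idempotents. You instead invoke the spectral theorem in the finite-dimensional formally real unital Jordan algebra $J$ to get $f=\sum_i\lambda_i e_i$ with pairwise orthogonal idempotents $e_i\in J$ directly, and then read off the eigenspaces of $\eta(f)$ as zero sets of the complementary idempotents $|v|^2-e_i$, invariant by part (a). Your argument is correct (the unit $|v|^2\in J$ and formal reality via the embedding in $\Sym^2(V)$ are both rightly checked), and it is arguably the more conceptual path — it is exactly the Jordan-theoretic content that the paper's induction is unwinding by hand. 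One caveat: you cite Appendix~\ref{A:jordan} for the spectral theorem, but the appendix records only the classification (Theorems~\ref{T:Albert},~\ref{T:JvNW}) and universal enveloping algebras, not the spectral decomposition; you would need an external reference such as \cite{McCrimmon}. The paper's induction has the virtue of being self-contained modulo what is actually in the appendix.
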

\begin{proof}
\begin{enumerate}[a)]
\item
Since $f$ is basic, $f^{-1}(0)$ is a union of leaves. Let $A=\eta(f)=\Hess(f)/2$, so that $f(x)=\left<x,Ax\right>$. Then $f\bullet f=f$ implies $A^2=A$, so that $A$ is orthogonal projection onto the subspace $\operatorname{im}(A)$, and $f(x)=|Ax|^2$. This shows that $f^{-1}(0)=\ker(A)$ is a vector subspace, and that the correspondence is injective.

On the other hand, given an invariant subspace $W\subset V$, the function $f(x)=d(x,W)^2$ is a basic degree-two polynomial, whose zero set equals $W$. It is idempotent because it corresponds to the orthogonal projection onto $W^\perp$.

\item Let $\lambda_1<\lambda_2<\cdots<\lambda_s$ be the eigenvalues of $A=\eta(f)$. We use induction on $s$. If $s=1$, $f$ is a multiple of the squared norm function, hence basic. Assume $s>1$. Denoting by $W_i$ the eigenspaces, it follows that $f(x)=\sum_{i=1}^s\lambda_i d(x,W_i^\perp)^2$. Then $f(x)-\lambda_1\vert x\vert^2$ is again basic, and has $W_1$ as its zero set. Thus $W_1$ is invariant. Applying the inductive hypothesis  to $f|_{W_1^\perp}$, it follows that $W_2,\ldots, W_s$ are invariant subspaces as well.
\end{enumerate}
\end{proof}
As a corollary, $W$ invariant implies $W^\perp$ invariant, so that $V$ decomposes into an orthogonal direct sum of irreducible invariant subspaces. Note however that the restrictions of $\F$ to its irreducible components do not determine $\F$.

\begin{proof}[Proof of Theorem \ref{MT:characterization}]
Denote by $\F_2^0$ the foliation given by the connected components of the leaves of $\F_2$. Let $\F'$ be an infinitesimal foliation of $V$ such that every $\F$-invariant subspace is also $\F'$-invariant. By Lemma \ref{L:idempotents}, this implies that every idempotent $f\in\R[V]^\F_2$ belongs to $\R[V]^{\F'}$, and hence  $\R[V]^\F_2\subset\R[V]^{\F'}$. Therefore $\F_2$ is coarser than $\F'$. Since the leaves of $\F'$ are connected, $\F_2^0$ is also coarser than $\F'$.

It remains to prove that $\F_2^0$ has the same invariant subspaces as $\F_2$ or, equivalently, by Lemma \ref{L:idempotents}, that their algebras of basic polynomials have the same degree-two part. Let $g$ be a quadratic $\F^0_2$-basic polynomial. Since $\F_2^0$ is coarser than $\F$, then $g\in \R[V]^{\F}_2$ which is by definition of $\F_2$ included in $\R[V]^{\F_2}_2$. The other inclusion is obvious.
\end{proof}

\begin{definition}[Isotypical components]
\label{D:isotypical}
Let $(V,\F)$ be an infinitesimal foliation, and let $\F_2$ be the associated  foliation from Theorem \ref{MT:classification}. The indecomposable factors $V_i$ of $(V,\F_2)$  are called the \emph{isotypical components} of $(V,\F)$.
\end{definition}
This coincides with the usual notion in the homogeneous case, see Proposition \ref{P:homogeneous}.

\begin{remark} Each isotypical component has a \emph{type} according to the classification. For homogeneous foliations only the \emph{real}, \emph{complex}, and \emph{quaternionic} types are possible, whereas for inhomogeneous foliations a new  type emerges, which we call \emph{Clifford} type. Note however that a foliation does not necessarily have a unique type. In fact, uniqueness is not to be expected, since there are orbit-equivalent representations of different (representation-theoretic) types, for instance the actions of $SO(4n),SU(2n),Sp(n)$ on $\R^{4n}$.

There are infinitely many inhomogeneous infinitesimal foliations of Clifford type besides Clifford foliations themselves. Concretely, if $P_0, \ldots P_m$ is any Clifford system, take \[
\F=\Le \left(|x|^2,\left<P_0x,x\right>,\ldots,\left<P_{m-2}x,x\right>, \left<P_{m-1}x,x\right>^2+\left<P_mx,x\right>^2\right).\]
This is an example of a composed foliation, see \cite{Radeschi14,GorodskiRadeschi15}. By Corollary \ref{C:composed}, the degree-two invariants correspond to the sub- Clifford system $P_0, \ldots P_{m-2}$, which is inhomogeneous when $m\geq 7$. This implies that $\F$ is inhomogeneous by Proposition \ref{P:homogeneous}. In contrast, the authors do not know of any inhomogeneous infinitesimal foliation of non-Clifford type.
\end{remark}

The Grassmannians $\Gr_j(\K^n)$ of $j$-dimensional subspaces of $\K^n$ for $\K=\R,\C,\HH$ have been classically defined as the sets of idempotents in the Jordan algebras $H_n(\K)$. The same is true of the Cayley plane, given by the primitive idempotents in the exceptional Jordan algebra $H_3(\mathbb{O})$, see \cite[page 28]{McCrimmon} and \cite[page 21]{baez}. In a similar spirit, we will use Lemma \ref{L:idempotents} to identify the moduli spaces of invariant subspaces of an infinitesimal foliation $(V,\F)$ in terms of Grassmannians and spheres. Denote by
\[\mathcal{M}_d(V,\F)=\{W\subset V\ |\ W \text{ invariant subspace and }\dim W=d\}\subset\Gr_d(V)\]
 the moduli space of $d$-dimensional invariant subspaces, and let 
 \[\mathcal{M}(V,\F)=\bigsqcup_{d=0}^{\dim V}\mathcal{M}_d(V,\F).\]
\begin{proposition}[Moduli spaces of invariant subspaces]
\label{P:moduli}
Let $(V,\F)$ be an infinitesimal foliation with isotypical components $V=V_1\oplus\cdots\oplus V_r$.
\begin{enumerate}[a)]
\item The invariant subspaces $W\subset V$ are precisely the direct sums $W=W_1\oplus\cdots\oplus W_r$ of invariant subspaces $W_i\subset V_i$. That is, $\mathcal{M}(V,\F)=\prod_i\mathcal{M}(V_i,\F|{V_i})$
\item Suppose $(V_i,\F_2|_{V_i})$ is given by the orbit decomposition of the standard diagonal representation on $V_i=(\K^k)^n$, where $\K=\R$, $\C$, or $\HH$. Then $\mathcal{M}_d(V_i,\F|_{V_i})$ is empty if $k$ does not divide $d$, and $\mathcal{M}_{jk}(V_i,\F|_{V_i})$ is a smooth submanifold of $\Gr_{jk}(V)$ diffeomorphic to $\Gr_j(\K^n)$, for $j=0,\ldots n$.

\item Suppose $(V_i,\F_2|_{V_i})$ is the Clifford foliation defined by a Clifford system $P_0,\ldots, P_m$ in $\R^{2l}$. Then $
\mathcal{M}(V_i,\F|_{V_i})$ equals
\[
\mathcal{M}_0(V_i,\F|_{V_i})\sqcup \mathcal{M}_l(V_i,\F|_{V_i})\sqcup \mathcal{M}_{2l}(V_i,\F|_{V_i})=
\{0\}\sqcup S^m \sqcup \{V_i\}.
\]
\end{enumerate}
\end{proposition}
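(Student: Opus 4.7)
The plan is to apply Lemma \ref{L:idempotents} uniformly, translating invariant subspaces into idempotents of the quadratic Jordan algebra $J = \R[V]^\F_2$, and then to use the classification of Theorem \ref{MT:classification} to identify these idempotents explicitly for each factor. A preliminary observation is that $\R[V]^\F_2 = \R[V]^{\F_2}_2$, so by Lemma \ref{L:idempotents} the invariant subspaces of $\F$ and $\F_2$ coincide (in line with Theorem \ref{MT:characterization}); hence throughout I may work with $\F_2$ in place of $\F$.

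For part (a), the decomposition $V = V_1 \oplus \cdots \oplus V_r$ of Theorem \ref{MT:classification} is orthogonal, and $J = J_1 \oplus \cdots \oplus J_r$ with each $J_i \subset \Sym^2(V_i)$ acting trivially on the remaining summands. An element $f = f_1 + \cdots + f_r$ of $J$ is idempotent iff each $f_i \in J_i$ is, and since $\eta(f_i)$ is then an orthogonal projection one has $f_i \geq 0$ and therefore $f^{-1}(0) = \bigoplus_i f_i^{-1}(0)$. Combined with Lemma \ref{L:idempotents} this gives the bijection $\mathcal{M}(V,\F) = \prod_i \mathcal{M}(V_i, \F|_{V_i})$. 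For part (b), Example \ref{E:standard} identifies $J_i$ with $H_n(\K)$ embedded in $\Sym^2(V_i)$ via $A \mapsto A \otimes I_k$. The idempotents of $H_n(\K)$ are exactly the $\K$-orthogonal projections onto $\K$-subspaces of $\K^n$, and the rank-$j$ projections are parameterized by $\Gr_j(\K^n)$. Under $\eta$, the projection onto $U \in \Gr_j(\K^n)$ becomes the projection onto $U \otimes_\K \K^k \subset V_i$ with kernel $U^\perp \otimes_\K \K^k$, so Lemma \ref{L:idempotents} together with the duality $\Gr_j(\K^n) \cong \Gr_{n-j}(\K^n)$ yields the required diffeomorphism; the smooth embedding into $\Gr_{jk}(V_i)$ is inherited from the smoothness of the tensoring map $U \mapsto U \otimes_\K \K^k$.

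For part (c), I write a general element of $J_i$ as $f = c|x|^2 + \sum_{a=0}^m b_a \langle P_a x, x\rangle$, so that $\eta(f) = cI + \sum_a b_a P_a$. Using the Clifford relations $P_a P_{a'} + P_{a'} P_a = 2\delta_{aa'}I$ one computes
\begin{equation*}
\eta(f)^2 = (c^2 + |b|^2) I + 2c \sum_a b_a P_a,
\end{equation*}
so $f$ is idempotent iff $c^2 + |b|^2 = c$ and $(2c-1)b_a = 0$ for every $a$. The solutions fall into three families: $(c,b) = (0,0)$ giving $f^{-1}(0) = V_i$; $(c,b) = (1,0)$ giving $f^{-1}(0) = \{0\}$; and $c = 1/2$ with $|b| = 1/2$, parameterized by a sphere $S^m$. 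In the last case $P := 2\sum_a b_a P_a$ satisfies $P^2 = |2b|^2 I = I$, so $P$ is a symmetric involution and $\eta(f) = (I+P)/2$ is the orthogonal projection onto its $(+1)$-eigenspace; therefore $f^{-1}(0)$ is the $(-1)$-eigenspace of $P$, which has dimension $l$ by the standard property of Clifford systems on $\R^{2l}$ (see \cite{Radeschi14}). Different points on $S^m$ yield different operators $P$ (by linear independence of the $P_a$) and hence different eigenspaces (a symmetric involution is determined by either of its eigenspaces), producing the required bijection $S^m \to \mathcal{M}_l(V_i, \F|_{V_i})$. The main obstacle is precisely this case analysis in (c), especially the correct computation of $\eta(f)^2$ via the Clifford relations and the identification of non-trivial idempotents with half-sums $(I+P)/2$ over unit Clifford combinations; parts (a) and (b) are essentially bookkeeping once the idempotent structure of each Jordan factor has been extracted.
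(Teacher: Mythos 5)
Your proposal is correct and takes essentially the same route as the paper: translate invariant subspaces into idempotents of the Jordan algebra $J$ via Lemma~\ref{L:idempotents}, decompose $J$ into simple ideals to get (a), identify the idempotents in $H_n(\K)$ with Grassmannians for (b), and with $(1+v)/2$, $v\in S^m$, for the spin factor in (c). The only cosmetic difference is in (c), where you solve the idempotent equations $c^2+|b|^2=c$, $(2c-1)b_a=0$ directly rather than quoting the known idempotent structure of $\mathcal{J}\Spin_{m+1}$; and in (b) you are slightly more careful than the paper in tracking that $f^{-1}(0)$ is a \emph{kernel}, reconciling the dimension count via $\Gr_j(\K^n)\cong\Gr_{n-j}(\K^n)$, which the paper implicitly handles by passing to images of projections.
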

\begin{proof}
\begin{enumerate}[a)]
\item
The decomposition of $V$ into isotypical components corresponds to the decomposition of the Jordan algebra $J=\R[V]^\F_2$ into a direct sum of ideals $J=J_1\oplus\cdots\oplus J_r$, see the proof of Theorem \ref{MT:classification}. Thus the idempotents $f\in J$ are exactly the sums $f_1+\cdots + f_r$, where each $f_i\in J_i$ is idempotent. By Lemma \ref{L:idempotents}, this means the invariant subspaces of $V$ are of the stated form.
\item By Lemma \ref{L:idempotents}, the moduli space $\mathcal{M}(V_i,\F|_{V_i})$ is diffeomorphic to the set of  idempotent matrices in $H_n(\K)$, that is, orthogonal projections onto subspaces of $\K^n$. Such orthogonal projections are in one-to-one correspondence with their images, that is, with the set of all subspaces of $\K^n$. Since the map $\eta:J_i=H_n(\K)\to\Sym^2(V_i)$ is given by tensoring with the identity $I_k$, projection onto a $j$-dimensional subspace in $\K^n$ corresponds to projection onto a $jk$-dimensional subspace in $V_i=\K^{kn}$. Therefore $\mathcal{M}(V_i,\F|_{V_i})$ is the disjoint union of $\mathcal{M}_{jk}(V_i,\F|_{V_i})=\Gr_j(\K^n)$ for $j=0,\ldots, n$.
\item 
The nontrivial idempotents are exactly the elements of the form $(1+v)/2$ where $v\in \R^{m+1}$ and $q(v,v)=-1$, that is, where $v\in S^m\subset \R^{m+1}$.  For any such $v$, $P=\eta(v)\in\Sym^2(V_i)$ has eigenvalues $\pm 1$, because $P^2=I$. The eigenspaces are switched by any $Q=\eta(w)$ for $w\in S^m$ perpendicular to $v$, because $PQ=-QP$. Therefore proper (that is, $\neq0,V_i$) invariant subspaces must have dimension $l$, and $\mathcal{M}_{l}(V_i,\F|_{V_i})$ is diffeomorphic to $S^m$.
\end{enumerate}
\end{proof}
\begin{remark}
Proposition \ref{P:moduli}, together with Schur's lemma (Proposition \ref{P:homogeneous}), gives 
an alternative proof of Theorem A(2) in \cite{Radeschi14}, which does not use the inhomogeneity of the FKM isoparametric foliations \cite{FerusKarcherMunzner81}. Theorem A(2) states that Clifford foliations are inhomogeneous, with the exception of the cases $m=1,2$, and the case $m=4$ and $P_0 P_1 P_2 P_3 P_4=\pm I$. Indeed, if $\F$ is a Clifford foliation and $m\neq 1,2,4$, then the moduli space of proper invariant subspaces is diffeomorphic to $S^m$, and thus not diffeomorphic to any projective space. By Proposition \ref{P:homogeneous}, $\F$ is inhomogeneous. If $m=4$, then the Jordan algebra $J=\R[V]^\F_2$ is isomorphic to that of the quaternionic Hopf action of $\Sp(k)$ on $(\HH^k)^2$. But if $P_0P_1P_2P_3P_4\neq \pm I$, the embedding $\eta:J\to \Sym^2(V)$ is not equivalent to that of the quaternionic Hopf action. This follows from the fact that the corresponding representations of the Clifford algebra are inequivalent, see page 1666 and Proposition 5.2 in \cite{Radeschi14}. Therefore, by Proposition \ref{P:homogeneous}, $\F$ is inhomogeneous.
\end{remark}

\section{Symmetries}
\label{S:symmetries}
In this section we use degree-two basic polynomials to generate some symmetries of an infinitesimal foliation, enough to act transitively on the connected components of the moduli space of invariant subspaces. For  a description of all foliated maps between possibly different infintesimal foliations, see Appendix \ref{A:symmetry}.

We exploit the ``action'' of the Jordan algebra $J$ on the higher graded parts of $\R[V]^\F$ given by the transnormal product $(f,g)\mapsto \left<\nabla f,\nabla g\right>$ for $(\deg(f),\deg(g))=(2,d)$. It implies that, when $(V,\F)$ is not irreducible, $\R[V]^\F$ has a finer graded structure and nontrivial symmetries:
\begin{proposition}[Multi-grading]
\label{P:multi-grading}
Let $(V,\F)$ be an infinitesimal foliation, and $W\subset V$ an invariant subspace. Then 
\begin{enumerate}[a)]
\item $\R[V]^\F$ is bi-graded with respect to the decomposition $V=W\oplus W^\perp$.
\item The linear transformation $r^W_\lambda=r_\lambda:V\to V$ defined by $w_1+w_2\in W\oplus W^\perp \mapsto w_1+\lambda w_2$ takes leaves to leaves, for any $\lambda\in\R$. 
\item Orthogonal projection onto $W$ takes leaves to leaves.
\item The algebra of $\F|_W$-basic polynomials on $W$ is naturally isomorphic to the part of $\R[V]^\F$ of bi-degree $(*,0)$. 
\item Let $f\in\R[V]^\F_2$. Then the symmetric endomorphism $\Hess(f)$ takes leaves of $(V,\F)$ onto leaves.
\end{enumerate}
\end{proposition}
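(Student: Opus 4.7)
The plan is to establish part (a) first; parts (b)--(e) will then follow with little extra work. The central tool will be the idempotent $f_W \in J := \R[V]^\F_2$ supplied by Lemma~\ref{L:idempotents}(a), which satisfies $f_W(x)=|\pi_{W^\perp}(x)|^2$ and hence $\nabla f_W(x)=2\pi_{W^\perp}(x)$. I will introduce the Euler-type operator $E\colon \R[V]\to\R[V]$ defined by $E(g)(x) := \langle \pi_{W^\perp}(x),\nabla g(x)\rangle = \tfrac{1}{2}\langle \nabla f_W,\nabla g\rangle(x)$. The second expression shows that $E$ preserves $\R[V]^\F$ (via the transnormal product), while the first, combined with Euler's identity, shows that $E$ acts as multiplication by $b$ on the bi-homogeneous component of bi-degree $(a,b)$ with respect to $V=W\oplus W^\perp$. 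Since $E$ is diagonalizable on polynomials of bounded degree with only finitely many integer eigenvalues, Lagrange interpolation produces polynomials in $E$ that project onto each bi-homogeneous component while preserving $\R[V]^\F$; this proves (a).

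Parts (b), (c), (d) will then follow directly. For (b), decomposing a basic polynomial as $g=\sum g_{a,b}$ gives $g\circ r_\lambda = \sum \lambda^b g_{a,b}$, manifestly basic for every $\lambda\in\R$. Part (c) is the case $\lambda=0$. For (d), the pullback $h\mapsto h\circ\pi_W$ carries $\R[W]^{\F|_W}$ into the bi-degree $(*,0)$ part of $\R[V]^\F$ (it lands there by construction and produces a basic polynomial by (c)); restriction $g\mapsto g|_W$ is its inverse on that subspace, yielding the desired algebra isomorphism.

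For (e), I will apply the spectral theorem to $A:=\eta(f)=\Hess(f)/2$, writing $A=\sum_i \lambda_i \pi_{W_i}$. By Lemma~\ref{L:idempotents}(b), the eigenspaces $W_i$ are mutually orthogonal $\F$-invariant subspaces with $V=\bigoplus_i W_i$. Iterating (a) via the pairwise commuting Euler operators $E_i(g):=\tfrac{1}{2}\langle \nabla f_{W_i^\perp},\nabla g\rangle$ (each preserving $\R[V]^\F$) will yield a multi-grading $\R[V]^\F=\bigoplus_{\mathbf{d}} \R[V]^\F_{\mathbf{d}}$ with respect to $V=\bigoplus_i W_i$. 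Then for basic $g=\sum_\mathbf{d} g_\mathbf{d}$, multi-homogeneity gives $g(Ax)=\sum_\mathbf{d} \bigl(\prod_i \lambda_i^{d_i}\bigr) g_\mathbf{d}(x)$, a sum of basic polynomials, so $g\circ A$ is basic, $A$ is foliated, and hence so is $\Hess(f)=2A$.

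The main conceptual step is (a); once the Euler operator $E$ is in hand the rest is essentially bookkeeping. The only mild technicality for (e) is checking that projection onto a joint eigenspace of the commuting $E_i$'s preserves $\R[V]^\F$, which holds because each $E_i$ does and the joint projections are polynomials in the commuting $E_i$'s.
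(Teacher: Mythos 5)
Your treatment of part (a) is essentially the paper's: the Euler-type operator $E(g)=\tfrac12\langle\nabla f_W,\nabla g\rangle$ is exactly the transnormal pairing with the idempotent, and Lagrange interpolation on its integer spectrum is interchangeable with the paper's Vandermonde-determinant step. Part (d) then follows as you say.

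However, there is a genuine gap in parts (b), (c), and (e). Your computation $g\circ r_\lambda=\sum_{a,b}\lambda^{b}g_{a,b}$ establishes only that $r_\lambda^*(\R[V]^\F)\subset\R[V]^\F$, i.e.\ that $r_\lambda$ takes each leaf \emph{into} a leaf. The proposition (consistently with how it is used later, e.g.\ in Proposition~\ref{P:foliated}) asserts that $r_\lambda$ takes each leaf \emph{onto} a leaf. For $\lambda\neq 0$ that upgrade is immediate, since $r_{\lambda^{-1}}$ also preserves basic polynomials and one argues with the inverse. But for $\lambda=0$ (so for (c), and for (e) whenever $\Hess(f)$ is singular) there is no inverse and the pullback argument alone does not yield surjectivity onto a leaf. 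The paper fills this with a separate compactness/equidistance argument: for a leaf $L$ and a point $x\in r_0(L)$ it bounds the Hausdorff distance
\begin{equation*}
d_H\bigl(r_0(L),L_x\bigr)\le d_H\bigl(r_0(L),r_\lambda(L)\bigr)+d_H\bigl(r_\lambda(L),L_x\bigr),
\end{equation*}
lets $\lambda\to 0$ using uniform convergence of $r_\lambda|_L$ on the compact $L$, and uses that $r_\lambda(L)$ and $L_x$ are equidistant leaves to force $d_H(r_0(L),L_x)=0$. Without something like this, your proof shows only the ``into'' statement for orthogonal projections and for non-invertible $\Hess(f)$, which is strictly weaker than what is claimed and needed. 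Your multi-graded treatment of (e) is an attractive variant of the paper's ``composition of $r_\lambda$'s'' argument, but it inherits the same $\lambda_i=0$ issue and so also needs the limiting argument to close.
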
 
\begin{proof}
Let $f\in J$ be the unique idempotent such that $f^{-1}(0)=W^\perp$. Let $x_1,\ldots x_a$ be an orthonormal basis for $W^*$, and $y_1,\ldots y_b$ for $(W^\perp)^*$. Then $f=x_1^2+\cdots + x_a^2$. 
\begin{enumerate}[a)]
\item
Let $g=g(x_1,\ldots, x_a,y_1,\ldots, y_b)$ be a basic polynomial, and let $g=g_0+\cdots + g_d$ be the unique decomposition with $j=\deg_{\{x_i\}}(g_j)$.
The transnormal product of $f$ and $g$ equals
$$\left<\nabla f,\nabla g\right>=2\sum_{i=1}^ax_i\frac{\partial g}{\partial x_i}=   2\sum_{j=1}^d j g_j$$
and is a basic polynomial. Applying this action of $f$ repeatedly to $g$ and using the Vandermonde determinant formula shows that $g_j$ is basic for each $j$.

\item By (a), the map $r_\lambda$ satisfies $r_\lambda^*(\R[V]^\F)\subset \R[V]^\F$, and thus $r_\lambda$ takes leaves into leaves (see Proposition \ref{P:foliated}(a)). If $\lambda\neq 0$, then the same applies to the inverse $r_{\lambda^{-1}}$, so that $r_\lambda$ takes leaves (on)to leaves.

In the case $\lambda=0$, let $L$ be a leaf,  $x\in r_0(L)$, and $L_x$ the leaf through $x$. We claim that $r_0(L)=L_x$. Indeed, for any $\lambda>0$ we have
\[d_H(r_0(L),L_x)\leq d_H(r_0(L), r_\lambda(L))+d_H(r_\lambda(L), L_x)\]
where $d_H$ denotes the Hausdorff distance. Since $L$ is compact, the restrictions $r_\lambda|_L$ converge uniformly to $r_0|_L$ when $\lambda\to 0$. Thus the images $r_\lambda(L)$ converge to $r_0(L)$ in the Hausdorff metric, that is, the first term above $d_H(r_0(L), r_\lambda(L))$ goes to zero when $\lambda\to 0$. On the other hand, since $L_x$ and $r_\lambda(L)$ are equidistant (because they are leaves), and $x$ is a point in $r_0(L)$, we have
\[d_H(r_\lambda(L), L_x)=d(r_\lambda(L), L_x)=d(r_\lambda(L), x)\leq d_H(r_0(L), r_\lambda(L)). \]
Thus $d_H(r_0(L), L_x)=0$, so that $L_x=r_0(L)$. 
\item This is the special case of (b) where $\lambda=0$.
\item By  (a), the  part of $\R[V]^\F$ of bi-degree $(*,0)$ is basic.  On the other hand, given an $\F_W$-basic polynomial on $W$, its composition with orthogonal projection onto $W$ is  $\F$-basic, by (c).
\item By Lemma \ref{L:idempotents}(b), the eigenspaces of $\Hess(f)$ are invariant subspaces. Then $\Hess(f)$ takes leaves to leaves because it is a composition of maps of type $r_\lambda$ from part (b).
\end{enumerate}
\end{proof}
By induction $\R[V]^\F$ is multi-graded with respect to any decomposition of $V$ into an orthogonal direct sum of invariant subspaces.

\begin{remark}
Part (b) of Proposition \ref{P:multi-grading} can be viewed as a ``Homothetic Transformation Lemma'' where a leaf is replaced with the union $W$ of leaves.
In fact, such a generalized Homothetic Transformation Lemma is valid for general, that is, non-infinitesimal, \srf s. More precisely, if $M$ is a Riemannian manifold with a \srf\  $\F$ and an immersed submanifold $N$ which is a union of leaves, then any $\lambda$-homothety in the normal direction to $N$ around a small open subset $P\subset N$ sends leaves to leaves. This can be proved either by using the Proposition above together with the Slice Theorem (see \cite{MendesRadeschi15}), or, more directly, by noting that the original argument in \cite[Lemma 6.2]{Molino} carries over to this case.
\end{remark}

\begin{remark}
\label{R:octonionicHopf}
In the homogeneous case, the endomorphisms $\Hess(f)$ in part (e) of the proposition above are exactly the equivariant symmetric endomorphisms. In particular their \emph{enveloping algebra} consists of equivariant maps, and hence of maps sending leaves to leaves. In the inhomogeneous case this is no longer always true. In fact,  take the octonionic Hopf foliation of $V=\R^{16}$, which is also given as the Clifford foliation associated to the Clifford system of type $C_{8,1}$. It is well-known to be inhomogeneous, see Examples and Remarks 4.1.1.(ii) in \cite{GromollWalschap}. The enveloping algebra of the endomorphisms of the form $\Hess(f)$ for $f\in\R[V]^\F_2$ is the full Clifford algebra, namely the set of all endomorphisms of $\R^{16}$ (see Table 1 on page 28 of \cite{LawsonMichelsohn}), which clearly contains maps that do not send leaves to leaves.
\end{remark}

Notwithstanding the remark above, one may always generate a \emph{group} of endomorphisms taking leaves to leaves from the invertible (respectively, orthogonal) endomorphisms of the form $\Hess(f)$, for $f\in\R[V]^\F_2$.
\begin{proposition}[Symmetry]
\label{P:symmetry}
Let $(V,\F)$ be an infinitesimal foliation, and let $G\subset\OO(V)$ be the closure of the group generated by the orthogonal endomorphisms of the form $\Hess(f)$, where $f\in\R[V]^\F_2$. Then
\begin{enumerate}[a)]
\item $G$ acts by foliated isometries, that is, by maps that send leaves to leaves.
\item If $V=V_1\oplus\cdots\oplus V_r$ is the decomposition of $V$ into isotypical components, then $G$ decomposes as a product $G=G\times\cdots\times G_r$ with $G_i\subset\OO(V_i)$.
\item Suppose $(V_i,\F_2|_{V_i})$ is given by the standard diagonal representation on $V_i=(\K^k)^n $, where $\K=\R,\C,\HH$. Then $G_i=\OO(n),\SU^{\pm}(n),\Sp(n)$, acting on $V_i$ by the map $A\mapsto  A\otimes I_k$. Here $\SU^{\pm}(n)$ denotes the group of unitary matrices with determinant $\pm 1$.
\item Suppose $(V_i,\F_2|_{V_i})$ is a Clifford foliation with Clifford system $P_0\ldots, P_m$, then $G_i= \operatorname{Pin}(m+1)$, acting on $V_i$ by the spin representation associated to the Clifford system.
\end{enumerate}
\end{proposition}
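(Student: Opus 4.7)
The plan is to verify the four parts in sequence. Part (a) is essentially immediate from Proposition \ref{P:multi-grading}(e): each generator $\Hess(f)$ with $f\in\R[V]^\F_2$ sends leaves to leaves, and when orthogonal it is an isometry; since the set of foliated isometries is a closed subgroup of $\OO(V)$, the closure $G$ consists entirely of foliated isometries.

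For part (b), I would use the ideal decomposition $J=J_1\oplus\cdots\oplus J_r$ from the proof of Theorem \ref{MT:classification}, together with the fact that $\eta\colon J\to\Sym^2(V)$ is block diagonal with respect to $V=V_1\oplus\cdots\oplus V_r$. For any $f=\sum f_i$ with $f_i\in J_i$, the Hessian $\Hess(f)=\bigoplus_i\Hess(f_i)$ is block diagonal, and orthogonality holds block-wise. Given any orthogonal $\Hess(f_i)\in\OO(V_i)$, extending by the identity on the remaining blocks (by adding $\tfrac{1}{2}|x|^2$-terms from the other isotypical components) yields an orthogonal Hessian on $V$. Hence $G$ contains each $G_i$ acting trivially off $V_i$, and the converse inclusion $G\subset\prod_i G_i$ is clear, giving $G=G_1\times\cdots\times G_r$.

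For part (c), recall from Example \ref{E:standard} that $\eta\colon J_i=H_n(\K)\to\Sym^2(V_i)$ is $A\mapsto A\otimes I_k$; hence, after rescaling, the orthogonal $\Hess(f)$ correspond bijectively to Hermitian involutions $B\in H_n(\K)$ (that is, $B^*=B$ and $B^2=I$) tensored with $I_k$. The map $B\mapsto B\otimes I_k$ identifies $\OO(n),\U(n),\Sp(n)$ (for $\K=\R,\C,\HH$) with a closed subgroup of $\OO(V_i)$, reducing the problem to computing the subgroup of the classical group generated by its Hermitian involutions. For $\K=\R$ these are orthogonal reflections and Cartan--Dieudonn\'e gives $\OO(n)$. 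For $\K=\HH$ the analogous classical fact gives $\Sp(n)$. For $\K=\C$ each Hermitian involution has determinant $\pm 1$, so $G_i\subset\SU^{\pm}(n)$, with equality by the complex analogue of Cartan--Dieudonn\'e. This generation statement for $\K=\C,\HH$ is the main technical obstacle and requires a careful inductive argument (for example, reducing to a single $2$-plane in $\K^n$) to show every element of $\SU^{\pm}(n)$, respectively $\Sp(n)$, is actually a product of Hermitian involutions.

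For part (d), the Clifford relations $P_iP_j+P_jP_i=2\delta_{ij}I$ yield, for $f=c|x|^2+\sum c_i\langle P_ix,x\rangle$, the identity $\Hess(f)^2=4(c^2+\sum c_i^2)I+8c\sum c_iP_i$. Hence $\Hess(f)$ is orthogonal if and only if either $c=\pm 1/2$ with all $c_i=0$ (giving $\pm I$), or $c=0$ and $\sum(2c_i)^2=1$ (giving $\sum a_iP_i$ with $a\in S^m$). Under the spin representation of $\Cl(\R^{m+1},q)$ determined by the Clifford system, these are precisely the images of $\pm 1$ and of unit vectors in $\R^{m+1}$, which generate $\Pin(m+1)$; hence $G_i=\Pin(m+1)$.
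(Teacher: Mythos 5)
Parts (a), (b), and (d) are correct and follow the same overall approach as the paper. Part (b) is, if anything, written out more carefully than the paper's one-line remark, since you explain how to realize each $G_i$ acting trivially off $V_i$ by padding with $|x|^2/2$ on the other blocks, which is needed for the product decomposition $G=G_1\times\cdots\times G_r$ rather than just the inclusion into the product. Part (d) is also correct and slightly more explicit than the paper: your computation $\Hess(f)^2=4(c^2+\sum c_i^2)I+8c\sum c_iP_i$ cleanly identifies all orthogonal Hessians as $\pm I$ together with $\sum a_iP_i$ for $a\in S^m$; the paper's wording omits $\pm I$, though it is harmless since $\pm 1\in\Pin(m+1)$.

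The gap is in part (c), and it is a real one. You reduce correctly to the claim that the Hermitian involutions in $\U(n)$ (resp.\ $\Sp(n)$) generate $\SU^{\pm}(n)$ (resp.\ $\Sp(n)$), and then write that this ``requires a careful inductive argument,'' citing a ``complex analogue of Cartan--Dieudonn\'e'' that you do not state or prove. This cannot simply be waved at. The genuine unitary Cartan--Dieudonn\'e theorem generates $\U(n)$ by pseudo-reflections $I-(1-\zeta)vv^*$ with $|\zeta|=1$, and these are Hermitian only when $\zeta=-1$; so the classical statement does not yield your claim, and the generation result you need has to be established separately. The paper avoids this difficulty altogether with a soft Lie-theoretic argument: the subgroup generated by Hermitian involutions is normal in $\SU^{\pm}(n)$ (conjugation by a unitary matrix sends a Hermitian involution to a Hermitian involution), it is infinite, and it contains elements of determinant $-1$ such as $\operatorname{diag}(-1,1,\ldots,1)$; since $\SU(n)$ is simple modulo its (finite) center, the closure of its intersection with $\SU(n)$ is all of $\SU(n)$, and hence the closure of the whole subgroup is $\SU^{\pm}(n)$. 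The same reasoning applies to $\Sp(n)$. You should either adopt this density-and-normality argument or genuinely carry out your proposed induction; as written, part (c) is incomplete.
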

\begin{proof}
\begin{enumerate}[a)]
\item By Proposition \ref{P:multi-grading}(e), the endomorphisms $\Hess(f)$ send leaves to leaves, and therefore so does the group generated by them. Moreover, the limit of a sequence of linear maps sending leaves also sends leaves to leaves, see the proof of Proposition  \ref{P:multi-grading}(b), or Appendix \ref{A:symmetry}. Therefore $G$ acts by foliated isometries.
\item The endomorphisms of the form $\Hess(f)$, where $f\in\R[V]^\F_2$, are block diagonal with respect to the decomposition of $V$. Therefore so is the group $G$. 
\item Assume $\K=\R$. By the Cartan-Dieudonn\'e theorem (see page 48, Theorem 6.6 in \cite{grove}), $\OO(n)$ is generated by reflections in $\R^n$, in particular by $H_n(\R)$. Thus $G_i=\OO(n)$. If $\K=\C$, then the subgroup $\SU^{\pm}(n)$ generated by $H_n(\C)\cap \SU^{\pm}(n)$  is dense. Indeed, it is normal and infinite, and $\SU^{\pm}(n)$  is simple. Therefore $G_i=\SU^{\pm}(n)$. Similarly, if $\K=\HH$, $G_i=\Sp(n)$. In all cases, since $\eta:J=H_n(\K)\to H_{nk}(\K)\subset \Sym^2(V_i)$ is given by taking the tensor product with the identity matrix $I_k$, the action of $G_i$ is also given by tensoring with $I_k$. 
\item  The elements of $J=\mathcal{J}\Spin_{m+1}$ with orthogonal Hessians are precisely the $v\in \R^{m+1}\subset J$ with $q(v,v)=-1$, and they generate the group $\operatorname{Pin}(m+1)\subset \Cl(\R^{m+1},q)$ in the Clifford algebra, see \cite[page 14]{LawsonMichelsohn}. The representation of $\operatorname{Pin}(m+1)$ on $V_i$ is the restriction to $\operatorname{Pin}(m+1)$ of the Clifford algebra representation defined by the Clifford system, that is, by the embedding of Jordan algebras $\eta:J\to \Sym^2(V)$. This is a real spinor representation, see \cite[page 35]{LawsonMichelsohn}. 
\end{enumerate}
\end{proof}
The action of $\Pin(m+1)$ from part (d) of the above proposition was already identified as symmetries of the Clifford foliation in \cite[Section 2.1]{Radeschi14}.

\begin{proof}[Proof of Theorem \ref{MT:symmetry}]

Let $V=V_1\oplus\cdots\oplus V_r$ be the decomposition of $V$ into isotypical components. By Propositions \ref{P:symmetry} and \ref{P:moduli}, the action of $G$ on $\mathcal{M}(V,\F)$ is given by the product of the actions of $G_i$ on $\mathcal{M}(V_i,\F|_{V_i})$. Therefore, it is enough to show that $G_i$ is transitive on each connected component of $\mathcal{M}(V_i,\F|_{V_i})$, for every $i$. So we may assume that $(V,\F)$ has only one isotypical component.

Since $G$ acts on $V$ by foliated isometries, it also acts on the Jordan algebra $J=\R[V]^\F_2$. This action preserves idempotents, which, by Lemma \ref{L:idempotents}, correspond to invariant subspaces in $V$.

If $J=H_n(\K)$ for $\K=\R,\C,\HH$, then, by Proposition \ref{P:symmetry},  $G=\OO(n)$, $\SU^\pm(n)$, $\Sp(n)$, and it  acts on $J$ by conjugation. In particular, if $g\in G$ and $A\in H_n(\K)$ is idempotent, that is, the orthogonal projection onto a subspace $U\subset \K^n$, then $g$ takes $A$ to the orthogonal projection onto $g(U)$. Thus the action of $G$ on the connected components of $\mathcal{M}(V,\F)$ is given by the standard actions of $G$ on the Grasmannians $\Gr_j(\K^n)$, which are well-known to be transitive.

If $J=\mathcal{J}\Spin_{m+1}=\operatorname{span}(1)\oplus \R^{m+1}$, then,  by Proposition \ref{P:symmetry}, $G=\Pin(m+1)$. Its action on $J$ is given the so-called adjoint representation, and in particular, $v\in S^m\subset J$ acts on $\R^{m+1}$ by the \emph{negative} of the reflection in $v^\perp $, see  \cite[Proposition 2.2]{LawsonMichelsohn}. Thus $G$ acts on $\R^{m+1}$ with image $\OO(m+1)$ if $m$ is odd, and with image $\SO(m+1)$ if $m$ is even. In either case, the action of $G$ on the set $S^m\subset\R^{m+1}$ of proper idempotents is transitive.
\end{proof}

As a corollary of Theorem \ref{MT:symmetry}, if two invariant subspaces of an isotypical component have the same dimension, then the restricted foliations are isomorphic. Note that two invariant subspaces in \emph{different} isotypical components may also have isomorphic foliations. For instance, for any  orthogonal representation $V$ of a connected compact Lie group $G$, take the product representation $V\times V$ of $G\times G$. The invariant subspaces $V\times 0$ and $0\times V$ are in distinct isotypical components, yet are orbit-equivalent, that is, have isomorphic foliations by $G\times G$-orbits.

\section{Invariant subspaces in the leaf space}
\label{S:leafspace}
The key to proving Theorem \ref{MT:smoothness} is that the metric space structure of the leaf space ``detects'' invariant subspaces, and hence the degree-two basic polynomials. To make this precise, we use the following metric notion. If $Z$ is a metric space with diameter $d$, and $X\subset Z$, define $X^*=\{z\in Z\ |\ d(z,X)=d\}$. The following lemma is well-known in the case of group representations (see \cite[page 76]{GorodskiLytchak14}):
\begin{lemma} Let $(V,\F)$ be an infinitesimal foliation, and let $SV$ be the unit sphere in $V$ centered at the origin.
\label{L:pi/2}
\begin{enumerate}[a)]
\item Assume $(V,\F)$ is free of trivial factors. Then $\diam(SV/\F)=\pi/2$ if and only if there are invariant subspaces $W$ that are proper, that is, $W\neq 0,V$.
\item Assume $\diam(SV/\F)=\pi/2$. Then $X\subset SV/\F$ is the image under the natural projection of a proper invariant subspace if and only $X=X^{**}$.
\end{enumerate}
\end{lemma}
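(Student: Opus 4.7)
The plan is to exploit the averaging operator of Remark~\ref{R:averaging} together with the vanishing of basic linear polynomials in the absence of trivial factors (Proposition~\ref{P:trivial}). The core observation, used in both parts, is: if leaves $L_x, L_y\subset SV$ satisfy $d(L_x,L_y)=\pi/2$, then $\langle x',y'\rangle\leq 0$ for every $x'\in L_x$ and $y'\in L_y$; for any fixed $y'\in L_y$, the function $\Av(\langle\cdot,y'\rangle)$ is a basic linear functional, which must vanish by Proposition~\ref{P:trivial}. Evaluating at $x$ turns $\int_{L_x}\langle x',y'\rangle\,dx'=0$ with nonpositive integrand into the pointwise equality $\langle x',y'\rangle=0$, and letting $y'$ range over $L_y$ gives $L_x\perp L_y$ pointwise.

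For (a), the ``$\Leftarrow$'' direction is immediate: a proper invariant subspace $W$ has invariant orthogonal complement $W^\perp$ (corollary to Lemma~\ref{L:idempotents}), and unit vectors $x\in W$, $y\in W^\perp$ have leaves lying in $W$ and $W^\perp$ respectively, so $d(L_x,L_y)=\pi/2$; combined with $\diam(SV/\F)\leq \pi/2$ from Proposition~\ref{P:trivial} this forces equality. For ``$\Rightarrow$'', take leaves $L_x, L_y$ realizing the diameter, apply the core observation to conclude $L_x\perp L_y$ pointwise, and invoke Lemma~\ref{L:span}: the subspace $\Span(L_x)$ is invariant, nonzero, and proper (since $y$ lies in its orthogonal complement).

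For (b), denote $\tilde{X}=\pi^{-1}(X)$ and $\tilde{Y}=\pi^{-1}(X^*)$. For ``$\Leftarrow$'', if $X=\pi(W\cap SV)$ for a proper invariant $W$, a direct computation gives $X^*=\pi(W^\perp\cap SV)$: any $z\in SV$ with a nonzero $W$-component admits a representative $z_W/|z_W|\in W\cap SV$ at spherical distance strictly less than $\pi/2$, while points of $W^\perp\cap SV$ are orthogonal to all of $W\cap SV$. Swapping the roles of $W$ and $W^\perp$ gives $X^{**}=X$. For ``$\Rightarrow$'', set $W=\Span(\tilde{X})$, invariant by Lemma~\ref{L:span}. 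The core observation applied pairwise to $\tilde{X}\times\tilde{Y}$ yields $\tilde{Y}\subset W^\perp$. Then for any $z\in W\cap SV$ and any $w\in\tilde{Y}$, the leaves $L_z\subset W$ and $L_w\subset W^\perp$ are pointwise orthogonal, so $d(\pi(z),\pi(w))=\pi/2$; varying $\pi(w)\in X^*$ gives $\pi(z)\in X^{**}=X$. Hence $\pi(W\cap SV)=X$, and $W$ is proper since both $\tilde{X}$ and $\tilde{Y}\subset W^\perp$ are nonempty.

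The main technical point is the averaging step in the core observation: one needs that $\Av$ preserves degree (Remark~\ref{R:averaging}), so that $\Av(\langle\cdot,y'\rangle)$ is indeed a basic linear functional, and that continuity of the integrand converts an averaged identity with sign constraint into a pointwise identity. Once this is in place, the remainder is organizational: Lemma~\ref{L:span} converts orthogonality of saturated sets into orthogonality of invariant subspaces, and the hypothesis $X=X^{**}$ in part (b) is exactly what is needed to close the loop between $X$ and $\Span(\pi^{-1}(X))$.
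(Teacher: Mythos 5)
Your proposal is correct and follows essentially the same route as the paper's proof: both use the averaging operator on linear functionals and the absence of trivial factors to upgrade $d(L_x,L_y)=\pi/2$ to pointwise orthogonality of leaves, then apply Lemma~\ref{L:span} and (for part (b)) the hypothesis $X=X^{**}$ to close the argument. The only cosmetic discrepancy is that your ``$\Rightarrow$''/``$\Leftarrow$'' labels in part (b) are swapped relative to the order in the statement, but the content of both directions is present.
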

\begin{proof}
\begin{enumerate}[a)]
\item Assume $(V,\F)$ admits a proper invariant subspace $W$. Since $W^\perp$ is again invariant and proper, we have $d(L_x,L_y)=\pi/2$ for any $x\in SV\cap W$ and $y\in SV\cap W^\perp$, and thus $\diam(SV/\F)\geq \pi/2$. Since $\F$ has no trivial factors, we must have $\diam(SV/\F)\leq \pi/2$ by  Proposition \ref{P:trivial}, and therefore  $\diam(SV/\F)=\pi/2$.

For the converse, let $x,y\in SV$ such that $d(L_x,L_y)=\pi/2$. Consider the average $f=\Av(\left<\cdot,x\right>)$, see Remark \ref{R:averaging}. Then $f=0$ by Proposition \ref{P:trivial}. In particular $0=f(y)=\int_{z\in L_y}\left<z,x\right>$, and since the integrand is everywhere non-positive, it must vanish identically. This means that $L_y\subset x^\perp$, and therefore $\Span(L_y)$ is a proper invariant subspace by Lemma $\ref{L:span}$.
\item Given a proper invariant subspace $W$, it is clear that $\pi(W)^*=\pi(W^\perp)$, and therefore that $\pi(W)^{**}=\pi(W)$.

Conversely, let $X\subset SV/\F$ satisfy $X^{**}=X$, and define $W=\Span(\pi^{-1}(X))$. Note that, for any $x\in\pi^{-1}(X)$ and $y\in\pi^{-1}(X^*)$, we have $\left<x,y\right>=0$, that is, $d(x,y)=\pi/2$ . Indeed, since $d(L_x,L_y)=\pi/2$, the linear functional $f(z)=\left<z,y\right>$ is non-positive on $L_x$. But due to the lack of trivial factors, the average of $f$ must be zero,  and thus $f$ vanishes identically on $L_x$.

Using bi-linearity of the inner product, it follows that $\left<x,y\right>=0$ for all $x\in W,y\in\Span(\pi^{-1}(X^*)) $. In particular, for any $x\in W$, its image $\pi(x)$ is at distance $\pi/2$ from $X^*$, hence $\pi(x)\in X^{**}$, which by assumption equals $X$. Therefore $\pi(W)=X$, as wanted.
\end{enumerate}
\end{proof}

\begin{proof}[Proof of Theorem \ref{MT:smoothness}.]
If the leaf spaces $SV/\F$ and $SV'/\F'$ have diameter less than $\pi/2$, then $\R[V]^\F_2$ and $\R[V']^{\F'}_2$ are spanned by $|x|^2$, and $\phi^*$ induces an isomorphism because $\phi(0)=0$. 

If the diameter of $SV/\F$ and $SV'/\F'$ are equal to $\pi/2$, then, by Lemma \ref{L:pi/2}, $\phi$ takes images of invariant subspaces to images of invariant subspaces. In particular $\phi^*$ takes the distance square functions from images of invariant subspaces to functions of the same type. These correspond to the idempotent elements in $\R[V]^\F_2$ and $\R[V']^{\F'}_2$ by Lemma \ref{L:idempotents}(a), which span all of $\R[V]^\F_2$ and $\R[V']^{\F'}_2$ by Lemma \ref{L:idempotents}(b). Thus $\phi^*$ is a linear isomorphism  $\R[V']^{\F'}_2\to \R[V]^\F_2$. Finally, on the regular part the transnormal product of basic functions clearly commutes with the projections $\pi$  and with $\phi^*$. Thus, by continuity, $\phi$ is an ismorphism of Jordan algebras.
\end{proof}

\section{First Fundamental Theorems}
\label{S:fft}
 We first show that the invariants listed in Example \ref{E:standard} generate the algebra of invariants of the standard diagonal representations. This is a simple application of the analogous result in the complex setting, called the First Fundamental Theorems of $\GL(n,\C)$, $\OO(n,\C)$, and $\Sp(n,\C)$, see \cite{Weyl} and \cite{FultonHarris} Appendix F.
 
\begin{proposition}
\label{P:ffthomogeneous}
Let $k,n\geq 1$ be integers, and $\K=\R,\C$, or $\HH$.
Consider the standard diagonal representation given by the natural action of $G=\OO(k)$ (respectively $\U(k),\Sp(k)$) on $V=(\K^k)^n$. The algebra of invariant polynomials is generated by   
$$ (v_1,\ldots v_n)\mapsto \sum_{i,j} A_{ij} v_i \bar{v_j}$$
where $A=(A_{ij})$ runs through the $n\times n$ Hermitian matrices  with entries in $\K$.
\end{proposition}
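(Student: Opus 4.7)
The plan is to complexify the representation and reduce to Weyl's First Fundamental Theorems for $\GL(k,\C)$, $\OO(k,\C)$, and $\Sp(2k,\C)$ in the three cases. The unifying principle is that the compact real form $G$ is Zariski-dense in the corresponding complex algebraic group $G_\C$, so any $G$-invariant real polynomial extends to a $G_\C$-invariant complex polynomial on the complexification, after which the complex FFT can be applied and the result restricted back.

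In the real case $G=\OO(k)$ acting on $V=(\R^k)^n$, given $f\in\R[V]^G$, extend $f$ to a complex polynomial $\tilde f$ on $V_\C=(\C^k)^n$. By Zariski density of $\OO(k,\R)$ in $\OO(k,\C)$, the extension $\tilde f$ is $\OO(k,\C)$-invariant. Weyl's FFT for $\OO(k,\C)$ then expresses $\tilde f$ as a polynomial in the bilinear invariants $q_{ij}(v)=\sum_s v_i^s v_j^s$, whose restrictions to $V$ are the real inner products $\langle v_i,v_j\rangle$. These are precisely the quadratic invariants labelled by $A\in H_n(\R)$.

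In the complex case $G=\U(k)$ acting on $V=(\C^k)^n$, view a real polynomial on $V$ as an element of the bi-graded ring $\C[v_i^s,\bar v_j^t]$ by treating the complex coordinates $v_i^s$ and their conjugates $\bar v_j^t$ as independent variables, subject to the appropriate reality condition. The $\U(k)$-action extends to a $\GL(k,\C)$-action in which $g$ acts by the standard representation on the $v_i$ and by the contragredient $(g^T)^{-1}$ on the $\bar v_j$; Zariski density of $\U(k)$ in $\GL(k,\C)$ preserves invariance. Weyl's FFT for $\GL(k,\C)$ then generates the invariants by the $n^2$ contractions $\sum_s v_i^s\bar v_j^t=\langle v_i,v_j\rangle_\C$, which match the quadratic invariants for $A\in H_n(\C)$.

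The quaternionic case $G=\Sp(k)$ on $V=(\HH^k)^n$ is the main obstacle. Identify $\HH^k=\C^{2k}$ so that $\Sp(k)=\Sp(2k,\C)\cap \U(2k)$. The complexification of $V$ as a real vector space decomposes as $V\otimes_\R\C\cong(\C^{2k})^n\oplus(\C^{2k})^n$ under the natural action of $\Sp(2k,\C)$, and by Zariski density a real $\Sp(k)$-invariant complexifies to an $\Sp(2k,\C)$-invariant on $V\otimes_\R\C$. Weyl's FFT for $\Sp(2k,\C)$ generates these invariants by the $\binom{2n}{2}=2n^2-n$ symplectic pairings of the $2n$ copies of $\C^{2k}$, a count that matches $\dim H_n(\HH)=2n^2-n$. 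The technical heart of this case is to make the identification $V\otimes_\R\C\cong(\C^{2k})^n\oplus(\C^{2k})^n$ explicit and to verify that the symplectic pairings, upon restriction to the real form, descend exactly to the real and imaginary components of the quaternionic pairings $v_i\bar v_j$, thereby producing all the invariants labelled by $A\in H_n(\HH)$.
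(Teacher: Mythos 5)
Your proposal is correct and follows essentially the same strategy as the paper: complexify, apply Weyl's complex First Fundamental Theorems for $\OO(k,\C)$, $\GL(k,\C)$, $\Sp(2k,\C)$, and descend to the real form; the Zariski-density step you spell out is what the paper compresses into the assertion that $\C[V^\C]^{G^\C}$ is the complexification of $\R[V]^G$. One small improvement worth noting: the paper closes the argument by invoking Schur's lemma (Proposition~\ref{P:homogeneous}) to identify \emph{all} degree-two invariants with $H_n(\K)$, which makes the final matching you defer to as ``the technical heart'' of the quaternionic case unnecessary --- once the FFT shows $\R[V]^G$ is generated in degree two, Schur's lemma already guarantees the degree-two part is exactly the set of polynomials parametrized by quaternionic Hermitian matrices, with no need to trace the symplectic pairings through the identification $V\otimes_\R\C\cong(\C^{2k})^{2n}$.
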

\begin{proof}
The complexifications of the natural actions of $G=\OO(k)$ (respectively $\U(k),\Sp(k)$) on $\R^k$ (respectively $\C^k$, $\HH^k$) are given by the natural actions of $G^\C=\OO(k,\C)$ (respectively $\GL(k,\C)$, $\Sp(2k,\C)$) on $\C^k$, respectively $\C^k\oplus (\C^k)^*$, $\C^{2k}\oplus(\C^{2k})^*$. Therefore the complexification of the action of $G$ on $V$ is one the representations covered by the First Fundamental Theorems of Weyl, see  \cite{Weyl} and \cite[Appendix F]{FultonHarris}. In particular, the algebra of complex $G^\C$-invariants $\C[V^\C]^{G^\C}$ is generated in degree two. Since $\C[V^\C]^{G^\C}$ is the complexification of $\R[V]^G$, it follows that the algebra of real invariants $\R[V]^G$ is generated in degree two.
By Schur's lemma (see Proposition \ref{P:homogeneous}), the listed invariants consist of all the degree-two invariants, and therefore they generate the algebra of invariants.
\end{proof}

Now we proceed to the inhomogeneous case.
Let $C=(P_0,\ldots,P_m)$ be a Clifford system in $V=\R^{2l}$, and let $\psi:\R^{2l}\to\R^{m+2}$ be the map given by 
\[
\psi(x)=\left(\psi_1(x),\ldots,\psi_{m+2}(x)\right)=\left(|x|^2,\left<P_0x,x\right>,\ldots,\left<P_mx,x\right>\right)
\]
The leaves of the corresponding Clifford foliation  $\F_C$ are defined as the sets of the form $\psi^{-1}(y)$, for $y\in\R^{m+2}$, see \cite{Radeschi14}. In the language of Definition \ref{D:levelsets}, $\F_C=\Le(\psi_1,\ldots,\psi_{m+2})$. 

The first step in the proof of Theorem \ref{MT:fft} for Clifford foliations is the following lemma:
\begin{lemma}
\label{L:fractions}
With the notation above, the field of fractions $\R(V)^{\F_C}$ of the algebra of basic polynomials for the Clifford foliation $\F_C$ is generated by $\psi_1,\ldots,\psi_{m+2}$. 
\end{lemma}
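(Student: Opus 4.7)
The inclusion $\R(\psi_1,\ldots,\psi_{m+2})\subseteq \R(V)^{\F_C}$ is immediate since each $\psi_i$ is $\F_C$-basic, so the plan is to prove the reverse inclusion following the complexification strategy hinted at in the introduction. The first step is to verify that $\psi_1,\ldots,\psi_{m+2}$ are algebraically independent over $\R$, and equivalently that $\psi\colon V\to\R^{m+2}$ is a dominant polynomial map. The gradients are $\nabla\psi_1(x)=2x$ and $\nabla\psi_{i+2}(x)=2P_ix$, and the Clifford relations $\langle P_ix,P_jx\rangle=\delta_{ij}|x|^2$ make $\{P_0x,\ldots,P_mx\}$ an orthogonal system of nonzero vectors for $x\neq 0$. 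A short computation shows that $x\in\Span(P_0x,\ldots,P_mx)$ if and only if $|x|^4=\sum_{i=0}^m\langle P_ix,x\rangle^2$, and this equality fails on a Zariski-open set because the FKM polynomial $\sum_i\langle P_ix,x\rangle^2$ is strictly less than $|x|^4$ off the focal locus. Hence the Jacobian of $\psi$ has full rank $m+2$ on an open set, and algebraic independence follows.

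Next I would pass to the complexification $\psi_\C\colon V_\C=\C^{2l}\to\C^{m+2}$, which is also dominant, and invoke the following standard consequence of Chevalley's theorem on constructible sets: for any dominant morphism of irreducible complex affine varieties $\phi\colon X\to Y$, a rational function $f\in\C(X)$ that is constant on the set-theoretic generic fibers of $\phi$ lies in $\phi^*\C(Y)$. Indeed, the image of its graph under $\phi\times\operatorname{id}\colon X\times\C\to Y\times\C$ is constructible and generically single-valued, so its Zariski closure is the graph of a rational function $g\in\C(Y)$, giving $f=g\circ\phi$. Applied to $\psi_\C$, this yields $\C(V_\C)^{\psi_\C}=\C(\psi_{1,\C},\ldots,\psi_{m+2,\C})$.

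Finally, I would descend to the real case. Given $f\in\R(V)^{\F_C}$, after clearing denominators the identity $f(x_1)=f(x_2)$ becomes a polynomial relation on the real variety $Z=\{(x_1,x_2)\in V\times V:\psi(x_1)=\psi(x_2)\}$. Since $Z$ is cut out by real polynomials and contains smooth real points of maximal dimension (for instance on the diagonal, where the first step shows the Jacobian has full rank), it is Zariski-dense in its complexification $Z_\C$, so the relation propagates to $Z_\C$ and $f$ is also constant on complex fibers of $\psi_\C$. By the second step, $f\in\C(\psi_{i,\C})$, and since $f$ is real-valued it must lie in $\R(\psi_1,\ldots,\psi_{m+2})$. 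The main obstacle is the complex statement in the second step; once that fact is in hand, the remaining work—algebraic independence and the real-to-complex transfer—is essentially bookkeeping.
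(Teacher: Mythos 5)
Your approach is genuinely different from the paper's. The paper exhibits a low-dimensional real linear subspace $W\subset V$ that meets every leaf, observes that on $W$ the partition by leaves refines to the orbits of a finite group ($\Z/2\times\Z/2$ when $m<l$, $\Z/2$ when $m=l$), checks directly that the $\psi_i|_W$ generate the field of invariants of that finite group, and then propagates the resulting fraction identity from $W$ back to $V$ using that $W$ meets every leaf. Your route instead goes through complexification, a Chevalley-type statement about rational functions constant on fibers of a dominant morphism, and a real-to-complex transfer. The Chevalley lemma you cite is correct and the overall shape of the strategy is reasonable, but there are two genuine gaps.

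First, step one fails when $m=l$. In that case the cone relation $\psi_1^2=\psi_2^2+\cdots+\psi_{m+2}^2$ holds \emph{identically} on $V$ — this is precisely what Lemma \ref{L:surjective} records, since the image of $\psi$ (and of $\psi^\C$) is the quadric cone, not all of $\R^{m+2}$. So $\psi$ is not dominant, the $\psi_i$ are algebraically dependent, and the Jacobian of $\psi$ never has rank $m+2$. Your identity ``$x\in\Span(P_0x,\dots,P_mx)$ iff $|x|^4=\sum\langle P_ix,x\rangle^2$'' is true, but the right-hand side holds everywhere when $m=l$, not only on the focal locus. This is patchable by replacing $\C^{m+2}$ with the image cone throughout and noting that the cone is irreducible, but as written the argument is false in this case.

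Second, and more seriously, the real-to-complex transfer in step three is not justified. What you actually need is that the numerator relation $p(x_1)q(x_2)-p(x_2)q(x_1)$, which vanishes on the \emph{real} points of $Z=\{\psi(x_1)=\psi(x_2)\}$, extends to vanish on the whole complex fiber product $V^\C\times_{\psi^\C}V^\C$. Having smooth real points of the correct local dimension on the diagonal tells you that $Z(\R)$ is Zariski-dense in the irreducible component of $V^\C\times_{\psi^\C}V^\C$ that contains those diagonal points — it does not control the other components. The fiber product can be (and for the Clifford systems with disconnected leaves, visibly is) reducible; you would need to argue that every irreducible component meets $Z(\R)$ in a dense set, or equivalently that the ideal generated by $\{\psi_i(x_1)-\psi_i(x_2)\}$ is a real ideal, and no such argument is given. (Parenthetically, the phrase ``smooth real points of maximal dimension\dots on the diagonal'' is also misleading: the diagonal has dimension $2l$, which is strictly less than the dimension $4l-(m+2)$ of $Z$; the correct observation is that $Z$ itself is smooth of dimension $4l-(m+2)$ at a regular diagonal point, not that the diagonal is top-dimensional.) The paper's device of restricting to a linear section on which the leaves become finite is precisely designed to sidestep this reducibility/real-points problem, and it is the key idea your proposal is missing.
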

\begin{proof}
After a change of basis of $V=\R^{2l}$, we may assume that $P_i$ has the following block decomposition:
\[
P_0=\begin{pmatrix}
I & 0 \\
0 & -I\end{pmatrix}
\qquad 
P_1=\begin{pmatrix}
0 & I \\
I & 0\end{pmatrix}
\qquad 
P_i=\begin{pmatrix}
0 & E_i \\
-E_i & 0\end{pmatrix}
\quad (2\leq i\leq m)
\]
where $E_2,\ldots E_m$ are skew-symmetric matrices satisfying $E_i^2=-I$ for all $i$, and $E_iE_j=-E_jE_i$ for different $i,j$. In particular each $E_i$ is orthogonal.
Choose any unit vector $v_0\in\R^l$. Then $\{v_0,E_2v_0,\ldots,E_mv_0\}$ is an orthonormal set of vectors.

Recall that $m\leq l$. We divide the proof into two cases.

\textbf{Case 1: $m<l$.} Choose  a unit vector $v_1\in\R^l$ that is orthogonal to $v_0,E_2v_0, \ldots E_m v_0$. Define $W\subset V=\R^{2l}$ as the set of vectors of the form
\[ \left(\lambda v_0\ ,\  \alpha_1 v_0 + \sum_{i=2}^m\alpha_i E_i v_0+\mu v_1\right) \]
for $\lambda,\alpha_i,\mu\in\R$. Then the restrictions of the polynomials $\psi_i$ to $W$ are given by:
\begin{align*}
\psi_1|_W &= \lambda^2+\sum_{i=1}^m\alpha_i^2+\mu^2  \\
\psi_2|_W &= \lambda^2-\sum_{i=1}^m\alpha_i^2-\mu^2 \\
\psi_3|_W &= 2\lambda \alpha_1 \\
\psi_i|_W &= -2\lambda\alpha_{i-2}  \qquad (4\leq i\leq {m+2}) 
\end{align*}
Recall that the image of $\psi$ in $\R^{m+2}$ is defined by $y_1^2-y_2^2-\cdots -y_{m+2}^2\geq 0$ and $y_1\geq 0$ \cite[Theorem A(1)]{Radeschi14}. We claim that $\psi|_W$ has the same image. Indeed, given $(y_1,\ldots,y_{m+2})$ satisfying these conditions, it follows that $y_1+y_2\geq 0$. If $y_1+y_2=0$, then we must have $y_3=\cdots=y_{m+2}=0$, and we may take $\lambda=0$, $\alpha_i=0$, and $\mu=\sqrt{(y_1-y_2)/2}$. If $y_1+y_2>0$, we may take $\lambda=\sqrt{(y_1+y_2)/2}$, $\alpha_i=-y_{i+2}/(2\lambda)$, and $\mu=\sqrt{(y_1^2-y_2^2-\cdots -y_{m+2}^2)/2}$. This means that $W$ intersects all leaves of the Clifford foliation.

Now consider the action of $\Z/2\times \Z/2$ on $W$ with generators given by $(\lambda,\alpha_i,\mu)\mapsto(-\lambda,-\alpha_i,\mu)$ and 
$(\lambda,\alpha_i,\mu)\mapsto(\lambda,\alpha_i,-\mu)$. Note that the ring of invariants is generated by the polynomials $\lambda^2, \lambda\alpha_i, \alpha_i\alpha_j, \mu^2$, and that each $\psi_i|_W$ is invariant.

On the other hand, the restrictions $\psi_i|_W$ generate the field of fractions $\R(W)^{\Z/2\times \Z/2}$. Indeed, the invariants $\lambda \alpha_i$ and $\lambda^2$ are linear combinations of the restrictions, while the remaining invariants can be written as
\[ \alpha_i\alpha_j=-\frac{(\psi_{i+2}|_W)(\psi_{j+2}|_W)}{4\lambda^2} \qquad \qquad \mu^2=\lambda^2-\psi_2|_W-\sum_{i=1}^m \alpha_i^2 \]

Let $f\in\R[V]^{\F_C}$ be a basic polynomial. Since $\psi$ separates leaves, and has constant values along the $\Z/2\times \Z/2$-orbits in $W$, the restriction of $f$ to $W$ is invariant under  $\Z/2\times \Z/2$. Since the restrictions of $\psi_i$ generate the field of fractions of the invariants, there are non-zero polynomials $Q_1,Q_2\in\R[y_1,\ldots, y_{m+2}]$ such that $f\cdot\psi^*Q_2-\psi^*Q_1$ is zero on $W$. Since this is a basic polynomial, and $W$ meets all leaves, we conclude that $f\cdot\psi^*Q_2=\psi^*Q_1$ on $V$.

\textbf{Case 2: $m=l$.} The proof is analogous to the previous case, so we omit some details. Define $W\subset V=\R^{2l}$ as the set of vectors of the form
\[ \left(\lambda v_0\ ,\  \alpha_1 v_0 + \sum_{i=2}^m\alpha_i E_i v_0\right) \]
for $\lambda,\alpha_i\in\R$. Recall that the image of $\psi$ in $\R^{m+2}$ is defined by $y_1^2-y_2^2-\cdots -y_{m+2}^2=0$ and $y_1\geq 0$. The restriction $\psi|_W$ has the same image, so that $W$ intersects all leaves of the Clifford foliation.

Consider the action of $\Z/2$ on $W$ by the antipodal map $(\lambda,\alpha_i)\mapsto (-\lambda,-\alpha_i)$. Note that the ring of invariants is generated by the polynomials $\lambda^2, \lambda\alpha_i, \alpha_i\alpha_j$, that each $\psi_i|_W$ is invariant, and that the restrictions $\psi_i|_W$ generate the field of fractions $\R(W)^{\Z/2}$. 

Let $f\in\R[V]^{\F_C}$ be a basic polynomial. Then its restriction to $W$ is invariant under  $\Z/2$, and so there are non-zero polynomials $Q_1,Q_2\in\R[y_1,\ldots, y_{m+2}]$ such that $f\cdot\psi^*Q_2-\psi^*Q_1$ is zero on $W$. Since this is a basic polynomial, and $W$ meets all leaves, we conclude that $f\cdot\psi^*Q_2=\psi^*Q_1$ on $V$.
\end{proof}

In order to prove Theorem \ref{MT:fft} for Clifford foliations from Lemma \ref{L:fractions}, we need to use the complexification of the Clifford foliation. More precisely, we need the following fact:
\begin{lemma}
\label{L:surjective}
In the notation above, the complexification $\psi^\C$ of the map $\psi$ is surjective onto $\C^{m+2}$ if $m<l$, and onto $y_1^2=y_2^2+\ldots y_{m+2}^2$ if $m=l$.
\end{lemma}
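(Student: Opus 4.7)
The plan is to prove surjectivity of $\psi^\C$ by combining an explicit construction of preimages on the complexification of the subspace $W \subset V$ from Lemma \ref{L:fractions} with an equivariance argument for the complex Pin group. The formulas for $\psi_j|_W$ derived in that proof are polynomial identities, so they continue to hold over $\C$: on $W^\C$, parametrized by $(\lambda,\alpha_1,\ldots,\alpha_m,\mu) \in \C^{m+2}$ (Case 1) or $(\lambda,\alpha_1,\ldots,\alpha_m) \in \C^{m+1}$ (Case 2), the same explicit formulas for $\psi_1,\psi_2,\psi_3,\ldots,\psi_{m+2}$ in these parameters are valid.

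Given $y \in \C^{m+2}$ (Case 1) or $y$ on the quadric $y_1^2=\sum_{j\geq 2}y_j^2$ (Case 2), adding the equations $\psi_1=y_1$ and $\psi_2=y_2$ forces $2\lambda^2 = y_1+y_2$. If $y_1+y_2 \neq 0$, I would pick a nonzero complex square root $\lambda$, solve uniquely $\alpha_1 = y_3/(2\lambda)$ and $\alpha_i = -y_{i+2}/(2\lambda)$ from the equations for $\psi_3,\ldots,\psi_{m+2}$, and close the system by choosing a complex square root of $y_1 - \lambda^2 - \sum \alpha_i^2$ to obtain $\mu$ in Case 1; in Case 2 the identity $\sum\alpha_i^2 = (y_1-y_2)/2$ is automatic from the quadric condition $y_1^2-y_2^2 = \sum_{j\geq 3}y_j^2 = 4\lambda^2 \sum \alpha_i^2$. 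This yields an explicit preimage in $W^\C$ for every such $y$.

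For the remaining case $y_1+y_2=0$, I would invoke the complex Pin equivariance: complexifying Proposition \ref{P:symmetry}(d), $\Pin(m+1,\C) \subset \Cl(\R^{m+1},q)\otimes\C$ acts on $V^\C$ via the extended spin representation, intertwining the natural $\OO(m+1,\C)$-action on the last $m+1$ target coordinates of $\C^{m+2}$ (with $y_1$ fixed). If $(y_2,\ldots,y_{m+2})=0$ then $y=0$ and $x=0$ works; otherwise its $\OO(m+1,\C)$-orbit is a complex quadric (minus the origin if null) which, since $m\geq 1$, contains a vector with any prescribed first coordinate, and in particular one different from $-y_1$. Applying such an $A\in\OO(m+1,\C)$, solving the previous case for $A\cdot y$, and pulling back via a lift $\tilde{A}^{-1}\in\Pin(m+1,\C)$ gives the desired preimage of $y$. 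The main obstacle is setting up the complex Pin action and the equivariance of $\psi^\C$ cleanly: these are routine complexifications of the results of Section \ref{S:symmetries}, but require care in handling the complex Clifford algebra and its spin representation, and in verifying that its twisted adjoint representation yields the full $\OO(m+1,\C)$ on $\C^{m+1}$.
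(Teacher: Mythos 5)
Your proposal follows essentially the same route as the paper: both use the explicit parametrization of $\psi^\C$ on $W^\C$ to cover the locus $\{y_1+y_2\neq 0\}\sqcup\{0\}$ (intersected with the quadric when $m=l$), and both then invoke $\Pin(m+1)$-equivariance, complexified, to reach the remaining points by moving within their $\OO(m+1,\C)$-orbits. The only organizational difference is that the paper lists the $\OO(m+1,\C)$-orbit types via Witt's theorem and checks each type against $\psi^\C(W^\C)$, whereas you directly argue that every orbit of a nonzero vector contains a point with second coordinate $\neq -y_1$; these are interchangeable.

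One detail you flag but leave open needs the paper's fix: the twisted adjoint of $\Pin(m+1)$ has image $\OO(m+1)$ only when $m$ is odd; for $m$ even the image is $\SO(m+1)$. The paper handles this by observing that for $m\geq 2$ the $\SO(m+1,\C)$- and $\OO(m+1,\C)$-orbits on $\C^{m+1}$ coincide, and by excluding $m=1$ as a homogeneous case where the First Fundamental Theorem is already known. You should incorporate this step rather than expecting the full $\OO(m+1,\C)$ from the Pin action. Also, your general claim that the orbit of a nonzero vector ``contains a vector with any prescribed first coordinate'' fails for a null vector, target coordinate $0$, and $m=1$; fortunately you only need the particular coordinate $\neq -y_1$, which does exist in every case, but the phrasing as stated overreaches slightly.
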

\begin{proof}
By \cite[Section 2.1]{Radeschi14} (or Proposition \ref{P:symmetry}), the group $\Pin(m+1)$ acts on $V=\R^{2l}$ by foliated isometries. The map $\psi:V\to\R^{m+2}$ is equivariant, where $\Pin(m+1)$  acts on $\R^{m+2}=\R\oplus\R^{m+1}$ via the trivial representation on $\R$, and via the standard representation of $\OO(m+1)$ or $\SO(m+1)$ on $\R^{m+1}$, according to $m$ odd or even (see proof of Theorem \ref{MT:symmetry}). Taking complexifications, this implies that $\psi^\C$ is equivariant. Assuming $m\geq 2$ (the case $m=1$ being homogeneous, where the result is well-known), the actions of $\OO(m+1,\C)$ and $\SO(m+1,\C)$ are orbit-equivalent. Therefore it is enough to show that the image of $\psi^\C$ meets every $\OO(m+1,\C)$-orbit if $m<l$, and every orbit in $y_1^2=y_2^2+\ldots +y_{m+2}^2$ if $m=l$.

By Witt's Theorem (see \cite[Theorem 5.2]{grove}), the $\OO(m+1,\C)$-orbits of $\C^{m+2}$ come in three types:
\begin{align*}
\{(y_1, 0, \ldots, 0)\}& \qquad y_1\in\C\\
    \{(y_1, y_2, \ldots, y_{m+2})\ |\ y_2^2+\ldots+y_{m+2}^2=0\}-\{(y_1, 0, \ldots, 0)\} &\qquad y_1\in\C \\
     \{(y_1, y_2, \ldots,  y_{m+2})\ |\ y_2^2+\ldots+y_{m+2}^2=s\} &\qquad y_1\in\C, s\in\C-0
\end{align*}

Take the subspace $W\subset V$ from the proof of Lemma \ref{L:fractions}. As can be seen from the formulas for $\psi_i|_W$ given there, $\psi^\C(W^\C)$ contains the subset 
\[\{(y_1, \ldots, y_{m+2})\in\C^{m+2}\ |\ y_1+y_2\neq 0\}\sqcup \{0\}\]
 if $m<l$, and its intersection with $y_1^2=y_2^2+\ldots+y_{m+2}^2$ if $m=l$. Comparing this with the description of the $\OO(m+1,\C)$-orbits above, we see that $\psi^\C(W^\C)$ meets every orbit if $m<l$, and every orbit in the cone $y_1^2=y_2^2+\ldots +y_{m+2}^2$ if $m=l$.
\end{proof}

\begin{proof}[Proof of Theorem \ref{MT:fft}]
Let $(V,\F)$ be a product of (orbit-decompositions of) standard diagonal representations and Clifford foliations. Since $\R[V]^\F$ is the tensor product of the algebras of basic polynomials of its factors, it is enough to consider the case where $\F$ is indecomposable.

If $(V,\F)$ is given by a standard diagonal representation, $\R[V]^\F$ is generated in degree two by the First Fundamental Theorems of Weyl, see Proposition \ref{P:ffthomogeneous}. 

Assume $(V,\F)=(V,\F_C)$ is a Clifford foliation. With the notation above, we show that the algebra $\R[V]^{\F_C}$ of basic polynomials for $\F_C$ is generated by $\psi_1,\ldots,\psi_{m+2}$. 

Let $f\in \R[V]^{\F_C}$. By Lemma \ref{L:fractions}, there are  polynomials $Q_1,Q_2\in\R[y_1,\ldots,y_{m+2}]$ such that $f\cdot\psi^*Q_2=\psi^*Q_1$ on $V$. Assume that $\deg(\psi^*Q_2)$ is minimal. We will show that $\psi^*Q_2$ is constant.

Considering $f,Q_1,Q_2$ as complex ploynomials, the same equation holds on the complexification $V^\C$. Therefore $Q_1$ is zero on the intersection of the zero set of $Q_2$ with the image of $\psi^\C:V^\C\to \C^{m+2}$.

\textbf{Case 1: $m<l$.} By Lemma \ref{L:surjective}, the image of $\psi^\C$ equals $\C^{m+2}$. If $Q_2$ is not constant, Hilbert's Nullstellensatz (see \cite[Theorem 1.3A]{Hartshorne}) implies that $Q_1^p$ is divisible by $Q_2$ for some $p$, and therefore that $Q_1$ and $Q_2$ have some irreducible common factor $F$ in $\C[y_1,\ldots,y_{m+2}]$, because this is a unique factorization domain. Since $Q_1,Q_2$ have real coefficients, either $F=\bar{F}$ is real, or $\bar{F}$ is another irreducible factor of $Q_1,Q_2$, and therefore so is the real polynomial $F\bar{F}$. Thus $Q_1,Q_2$ have a real common factor, contradicting the minimality of $\deg(\psi^*Q_2)$. Thus $Q_2$, and therefore $\psi^*Q_2$, are constant.

\textbf{Case 2: $m=l$.} By Lemma \ref{L:surjective}, the image of $\psi^\C$ equals the set $g(y)=0$, where $g(y)=y_1^2-y_2^2-y_3^2-\ldots -y_{m+2}^2$. If $\psi^*Q_2$ is not constant, then $Q_2$ is not constant in the quotient ring $\C[y_1, \ldots, y_{m+2}]/(g)$, and so, by the Nullstellensatz, $Q_1^p$ is divisible by $Q_2$ in  $\C[y_1, \ldots, y_{m+2}]/(g)$, for some $p$.  We may assume $m>2$, the other cases being homogeneous. Then, using the change of variables $y_1'=y_1-y_2$ and $y_2'=y_1+y_2$, one may apply Theorem 1.1 in \cite{Nagata57} to conclude that $\C[y_1, \ldots, y_{m+2}]/(g)$ is a unique factorization domain. This implies that $Q_1$ and $Q_2$ have a common factor in this quotient ring, and since $Q_1$ and $Q_2$ are real, they in fact have a real common factor. In other words, there is $F\in\R[y_1, \ldots y_{m+2}]$ which divides both $Q_1$ and $Q_2$ modulo $g$ and is not constant modulo $g$. This implies that $\psi^*F$ is not contant and divides both $\psi^*Q_1$ and $\psi^*Q_2$, contradicting the minimality of $\deg(\psi^*Q_2)$. Thus $\psi^*Q_2$ must be constant.

\end{proof}

\begin{remark}
The proof of Theorem \ref{MT:fft} for Clifford foliations presented above is analogous to the proof of the First Fundamental Theorem for $\OO(n,\C)$ acting on $(\C^n)^n$ found in \cite[Theorem 14-1.2]{KacNotes}. The subspace $W\subset \R^{2l}$ used in the proof of Lemma \ref{L:fractions} plays the role of the subspace $B_n\subset \C^{n^2}$ of upper triangular matrices.
\end{remark}

Recall the construction of composed foliations \cite{Radeschi14}. One starts with a Clifford system $C$ as above, and an infinitesimal foliation $\F_0$ of $\R^{m+1}$. The leaves of the composed foliation $\F=\F_0\circ\F_C$ of $\R^{2l}$ are then defined as the sets of the form $\psi^{-1}(L)$, where $L$ is a leaf of the foliation $(\R^{m+2},\text{trivial}\times \F_0)$.

Now suppose one has a set of homogeneous generators $\rho_1,\ldots,\rho_N$ for the algebra of $\F_0$-basic polynomials on $\R^{m+1}$. In particular $\F=\Le(|x|^2, \psi^*(\rho_1),\ldots,\psi^*(\rho_N))$, because $y_1,\rho_1,\ldots, \rho_N$ generate $\R[y_1,\ldots,y_{m+2}]^{\text{trivial}\times \F_0}$.

\begin{corollary}
\label{C:composed}
With the notations above, the algebra $\R[V]^\F$ of basic polynomials for the composed foliation $\F$ is generated by $\psi^*(y_1)=|x|^2$ and $\psi^*(\rho_1),\ldots,\psi^*(\rho_N)$.
\end{corollary}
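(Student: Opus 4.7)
The plan is to combine Theorem~\ref{MT:fft} applied to $\F_C$ with an averaging step in the $\F_0$-direction. Concretely, given an arbitrary $f\in\R[V]^\F$, I will first write $f=\psi^*(P)$ for some polynomial $P$ on $\R^{m+2}$, and then replace $P$ by a $(\mathrm{trivial}\times\F_0)$-basic polynomial without changing its values on $\psi(V)$.

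Since every $\F$-leaf is a union of $\psi$-fibres (i.e.\ of $\F_C$-leaves), $\F$ is coarser than $\F_C$, so $\R[V]^\F\subseteq \R[V]^{\F_C}$. By Theorem~\ref{MT:fft} applied to the Clifford foliation, the latter equals $\psi^*(\R[y_1,\ldots,y_{m+2}])$. Hence $f=\psi^*(P)$ for some $P\in\R[y_1,\ldots,y_{m+2}]$, and the $\F$-basicity of $f$ translates into the statement that $P$ is constant on every set of the form $(\{y_1\}\times L_0)\cap \psi(V)$, with $L_0$ an $\F_0$-leaf in $\R^{m+1}$. The image $\psi(V)$ is contained in $\{y_1\geq 0,\ y_1^2\geq y_2^2+\cdots+y_{m+2}^2\}$ (with equality when $m=l$), as recorded in the proof of Lemma~\ref{L:fractions}; together with the fact that the leaves of $\F_0$ lie on spheres around $0$ (because $\F_0$ is infinitesimal), this implies that whenever $(y_1,y)\in\psi(V)$ the full slice $\{y_1\}\times L_0^y$ lies in $\psi(V)$. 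Thus the basicity of $P$ along $\psi(V)$ becomes: for each $(y_1,y)\in\psi(V)$, the polynomial $P(y_1,\cdot)$ is constant on the $\F_0$-leaf through $y$.

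Next let $\Av_0$ denote the averaging operator for $\F_0$ on $\R^{m+1}$, which preserves polynomials by Remark~\ref{R:averaging}, and define $\widetilde{\Av}:\R[y_1,\ldots,y_{m+2}]\to \R[y_1,\ldots,y_{m+2}]$ by applying $\Av_0$ fibrewise in $y_1$: if $P=\sum_k a_k(y_2,\ldots,y_{m+2})\,y_1^k$, then $\widetilde{\Av}(P)=\sum_k \Av_0(a_k)\,y_1^k$. By construction $\widetilde{\Av}(P)$ is $(\mathrm{trivial}\times\F_0)$-basic, hence lies in $\R[y_1]\otimes\R[y_2,\ldots,y_{m+2}]^{\F_0}=\R[y_1,\rho_1,\ldots,\rho_N]$. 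Moreover, by the previous paragraph $P(y_1,\cdot)$ is already constant on each $\F_0$-leaf of a point of $\psi(V)$, so $\widetilde{\Av}(P)$ and $P$ agree on $\psi(V)$. Therefore $f=\psi^*(P)=\psi^*(\widetilde{\Av}(P))$ exhibits $f$ as a polynomial in $|x|^2=\psi^*(y_1)$ and $\psi^*(\rho_1),\ldots,\psi^*(\rho_N)$.

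The one subtle point in this argument is the containment $\{y_1\}\times L_0^y\subset \psi(V)$ for $(y_1,y)\in\psi(V)$, which is what makes the averaging trick work uniformly in both cases $m<l$ and $m=l$. Without the infinitesimality of $\F_0$ one would instead have to work with $P$ modulo the ideal of $\psi(V)$ (which is nonzero precisely when $m=l$, generated by $g=y_1^2-y_2^2-\cdots-y_{m+2}^2$), and the combinatorics of separating the $y_1$-graded pieces of $P$ along a lower-dimensional image would become considerably more involved.
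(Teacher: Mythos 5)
Your proof is correct and follows essentially the same route as the paper's: invoke Theorem~\ref{MT:fft} to pull $f$ back to a polynomial $P$ on $\R^{m+2}$, then replace $P$ by its $(\mathrm{trivial}\times\F_0)$-average, which lies in $\R[y_1,\rho_1,\ldots,\rho_N]$ and agrees with $P$ on $\psi(V)$. The one thing you do that the paper leaves implicit is to justify why $P$ and its average coincide on $\psi(V)$ — namely that the full $(\mathrm{trivial}\times\F_0)$-leaf through any point of $\psi(V)$ stays inside $\psi(V)$, since $\F_0$-leaves lie on spheres and $\psi(V)$ is a union of such slices — and your fibrewise $\widetilde{\Av}$ is exactly the averaging operator for the product foliation, so the two arguments are the same.
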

\begin{proof}
Let $f\in\R[V]^\F$ be an arbitrary $\F$-basic polynomial. It is in particular $\F_C$-basic, and therefore by Theorem \ref{MT:fft} there exists $Q\in \R[y_1,\ldots,y_{m+2}]$ such that $f=\psi^*(Q)$. By assumption the polynomial $Q$ is $(\text{trivial}\times \F_0)$-basic on the image of $\psi$. Therefore it coincides with its $(\text{trivial}\times \F_0)$-average $\Av(Q)$ on the image of $\psi$ (see Remark \ref{R:averaging}). Since $\Av(Q)$ is basic, there is a polynomial $P\in\R[z_1, \ldots, z_{N+1}]$ such that $\Av(Q)=(y_1,\rho_1,\ldots, \rho_N)^*(R)$. Then $f=\psi^*(Q)=\psi^*(\Av(Q))=\psi^*((y_1,\rho_1,\ldots, \rho_N)^*(R))$, as wanted.
\end{proof}

We finish this section with a result related to Lemma \ref{L:surjective}.
\begin{proposition}
Let $(V,\F)$ be an infinitesimal foliation, and $\rho_1,\ldots, \rho_N$ be generators for $\R[V]^\F$. Then the image of the complexified map $\rho^\C:V^\C\to\C^N$ equals the variety of relations between $\rho_1,\ldots, \rho_N$.
\end{proposition}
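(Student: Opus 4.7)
The plan is to prove both inclusions, with the nontrivial one following from an averaging argument. Write $I=\{P\in\C[w_1,\ldots,w_N]\mid P(\rho_1,\ldots,\rho_N)=0\}$, so the variety of relations is $X\subset\C^N$, the common zero set of $I$. The inclusion $\rho^\C(V^\C)\subset X$ is formal: every $P\in I$ gives a polynomial identity in $\R[V]$ which persists after extending scalars, so $P(\rho^\C(v))=0$ for all $v\in V^\C$.

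For the reverse inclusion $X\subset\rho^\C(V^\C)$, I would fix $z=(z_1,\ldots,z_N)\in X$ and suppose for contradiction that the fiber $(\rho^\C)^{-1}(z)\subset V^\C$ is empty. By Hilbert's Nullstellensatz, the ideal $(\rho_1^\C-z_1,\ldots,\rho_N^\C-z_N)$ in $\C[V^\C]$ then equals the whole ring, so
\[1=\sum_{i=1}^N g_i\,(\rho_i^\C-z_i)\]
for some $g_i\in\C[V^\C]$.

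The core step is to apply the complex-linear extension of the averaging operator from Remark \ref{R:averaging},
\[\Av_\C:\C[V^\C]\cong\R[V]\otimes_\R\C\longrightarrow\R[V]^\F\otimes_\R\C,\]
to both sides. Each $\rho_i^\C-z_i$ restricts to a basic complex-valued polynomial on $V$, so $\Av_\C$ commutes with multiplication by it, giving
\[1=\sum_{i=1}^N \Av_\C(g_i)\,(\rho_i^\C-z_i).\]
Since $\rho_1,\ldots,\rho_N$ generate $\R[V]^\F$, and hence $\R[V]^\F\otimes\C$ as a $\C$-algebra, I can write each $\Av_\C(g_i)=\phi_i(\rho_1^\C,\ldots,\rho_N^\C)$ for some $\phi_i\in\C[w_1,\ldots,w_N]$.

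Setting $P(w)=\sum_i\phi_i(w)(w_i-z_i)-1$ then yields an element of $\C[w_1,\ldots,w_N]$ that vanishes when evaluated on $(\rho_1^\C,\ldots,\rho_N^\C)$, hence $P\in I$; yet $P(z)=-1\neq 0$, contradicting $z\in X$. The only subtlety I expect is to check that $\Av_\C$ preserves polynomials and commutes with multiplication by polynomials that restrict to basic functions on $V$, which follows immediately from the corresponding real statements by decomposing into real and imaginary parts.
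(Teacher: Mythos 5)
Your proof is correct and follows essentially the same approach as the paper: both assume the fiber is empty, use the Nullstellensatz to write $1=\sum f_i g_i$ with $f_i$ in the (complexified) ideal of basic polynomials vanishing at the chosen point and $g_i\in\C[V^\C]$, and then apply the complexified averaging operator as a Reynolds operator to pull the $g_i$ back into $(\R[V]^\F)^\C$ and derive a contradiction. The only difference is cosmetic: the paper phrases the contradiction as $1\in m_x$ for the maximal ideal $m_x\subset(\R[V]^\F)^\C$, while you package the averaged relation as an explicit polynomial $P\in I$ with $P(z)\neq 0$.
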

\begin{proof}
We will use the averaging operator $\Av:\R[V]\to\R[V]^\F$, see \cite{LytchakRadeschi15} and Remark \ref{R:averaging}. Its complexification $\Av^\C :\C[V^\C]\to(\R[V]^\F)^\C$ is a Reynolds operator, that is, it restricts to the identity on $(\R[V]^\F)^\C$, and $\Av^\C(fg)=f\Av^\C(g)$ for every $f\in(\R[V]^\F)^\C$, $g\in\C[V^\C]$.

Let $X$ be the variety of relations, that is, the complex variety corresponding to the finitely generated algebra $(\R[V]^\F)^\C$.  The map $\rho^\C$ corresponds to the inclusion of algebras $(\R[V]^\F)^\C\subset \C[V^\C]$. Let $x\in X$, corresponding to the maximal ideal $m_x\subset (\R[V]^\F)^\C$. If $(\rho^\C)^{-1}(x)$ is empty, this means that the ideal of $\C[V^\C]$ generated by $m_x$ contains the constant polynomial $1$. Thus there are $f_1,\ldots f_s\in m_x$ and $g_1,\ldots, g_s\in \C[V^\C]$ such that 
$1=f_1g_1+\cdots+f_s g_s$. Applying the Reynolds operator $\Av^\C$ to this equation yields $1=f_1\Av^\C(g_1)+\cdots+f_s \Av^\C(g_s)\in m_x$, a contradiction. Therefore $(\rho^\C)^{-1}(x)$ is non-empty.
\end{proof}
In the homogeneous case the propositon above is a standard fact in Invariant Theory, see Theorem 3.5.(ii) in \cite{Newstead}, or Lemma 2.3.2 in \cite{DerksenKemper}. In fact, the proof presented above is similar to the latter, the only difference being that the classical averaging (Reynolds) operator is replaced with the averaging operator from \cite{LytchakRadeschi15}.

\appendix

\section{Jordan algebras}
\label{A:jordan}
In this appendix we recall some definitions and facts about Jordan algebras and Clifford algebras. See \cite{McCrimmon,LawsonMichelsohn,baez} for more information.

A \emph{Jordan algebra} (over the reals) is a real vector space $J$ together with a commutative bilinear operation $(a,b)\mapsto a\bullet b$ satisfying the Jordan identity 
\[a\bullet(b\bullet a^2)=(a\bullet b)\bullet a^2,\]
where $a^2=a\bullet a$. 
If $\mathcal{A}$ is an associative algebra with operation $(a,b)\mapsto ab$, then $\mathcal{A}^+$ denotes the Jordan algebra whose underlying vector space is the same as $\mathcal{A}$, and whose Jordan product is given by $a\bullet b=(ab+ba)/2$.

A Jordan algebra is called \emph{special} if it is isomorphic to a subalgebra of $\mathcal{A}^+$ for some associative algebra $\mathcal{A}$. Otherwise it is called \emph{exceptional}.

A Jordan algebra $J$ is called \emph{formally real} if, for every finite subset $(a_i)_i\subset J$,
$$\sum_i a_i^2=0\ \ \Longrightarrow\ \  a_i=0\ \ \forall i$$

\begin{example}[Hermitian matrices]
Let $\K=\R,\C$, or $\HH$. The set $H_n(\K)$ of $n\times n$ Hermitian matrices is closed under the Jordan operation on $\mathcal{A}^+$, where $\mathcal{A}$ is the associative  algebra of all $n\times n$ matrices with entries in $\K$. Hence $H_n(\K)$ is a special, formally real Jordan algebra.

Even though the algebra $\mathbb{O}$ of octonions (also known as Cayley numbers, see \cite{baez})  is \emph{not} associative, the space of Hermitian matrices $H_n(\mathbb{O})$ is a (formally real) Jordan algebra for $n\leq 3$ (see \cite[page 60]{McCrimmon} and \cite[page 30]{baez}).
\end{example}

\begin{theorem}[\cite{Albert34}]
\label{T:Albert}
The Jordan algebra $H_3(\mathbb{O})$ is exceptional.
\end{theorem}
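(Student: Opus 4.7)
The plan is to argue by contradiction. Suppose there were an injective Jordan algebra homomorphism $\iota\colon H_3(\mathbb{O})\hookrightarrow\mathcal{A}^+$ for some associative algebra $\mathcal{A}$; I would deduce that octonion multiplication must then be associative, contradicting the definition of $\mathbb{O}$.

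The starting point is the Peirce decomposition of $H_3(\mathbb{O})$ relative to the three primitive orthogonal idempotents $e_1,e_2,e_3$ (the diagonal matrix units, summing to the identity $1$). Every element is a sum of diagonal scalars $\alpha_i e_i$ and off-diagonal Hermitian elements $a[ij]$ for $a\in\mathbb{O}$, and the crucial Jordan product reads
\[
a[ij]\bullet b[jk]=\tfrac12\,(ab)[ik]\qquad\text{for distinct }i,j,k,
\]
which is the only place the (non-associative) octonion product enters the Jordan structure. In $\mathcal{A}$ the images $E_i=\iota(e_i)$ satisfy $E_i^2=E_i$ and $E_iE_j+E_jE_i=0$. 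After replacing $\mathcal{A}$ by its corner $E\mathcal{A}E$ with $E=E_1+E_2+E_3=\iota(1)$, and exploiting the Peirce-action relations $e_i\bullet a[jk]=0$ for $i\notin\{j,k\}$ applied to the off-diagonal images, one may reduce to strict orthogonality $E_iE_j=0$ for $i\neq j$. I expect this reduction to be the main technical obstacle of the argument.

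Once strict orthogonality is available, each $\iota(a[ij])$ decomposes uniquely as $T_{ij}(a)+T_{ji}(\bar a)$ with linear maps $T_{ij}\colon\mathbb{O}\to E_i\mathcal{A}E_j$. Substituting into the Jordan product formula above and expanding in the associative product of $\mathcal{A}$, the Peirce decomposition kills all cross terms and leaves the clean associative identity
\[
T_{ij}(a)\,T_{jk}(b)=T_{ik}(ab)\qquad(i,j,k\text{ distinct}).
\]
A parallel computation from $a[ij]\bullet b[ij]=\Re(a\bar b)(e_i+e_j)$ yields $T_{ij}(a)\,T_{ji}(\bar a)=|a|^2\,E_i$, from which each $T_{ij}$ is injective (since $|a|^2$ is a nonnegative real scalar and $E_i\neq 0$).

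The contradiction now comes from pure associativity in $\mathcal{A}$. Using $T_{23}(1)\,T_{31}(1)=T_{21}(1)$, the associativity of the four-fold product $T_{12}(a)\,T_{23}(1)\,T_{31}(1)\,T_{12}(d)$ yields $T_{12}(a)\,T_{21}(1)\,T_{12}(d)=T_{12}(ad)$, and therefore $T_{12}(ad)\,T_{23}(e)=T_{13}\bigl((ad)e\bigr)$. On the other hand, re-associating the five-fold product $T_{12}(a)\,T_{21}(1)\,T_{12}(d)\,T_{23}(e)$ as $T_{12}(a)\bigl(T_{21}(1)\,T_{13}(de)\bigr)=T_{12}(a)\,T_{23}(de)$ gives $T_{13}\bigl(a(de)\bigr)$ instead. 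By injectivity of $T_{13}$, we conclude $(ad)e=a(de)$ for all $a,d,e\in\mathbb{O}$, contradicting the non-associativity of the octonions.
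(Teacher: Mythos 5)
The paper does not give a proof of this theorem: it cites Albert's 1934 paper and uses the result as a black box. Your proof, on the other hand, is a complete and correct argument, and it follows the classical Peirce--decomposition route (essentially Albert's own approach, and the one in standard references such as Jacobson). One clarification worth making: the step you flag as ``the main technical obstacle,'' namely passing from the Jordan orthogonality $E_iE_j+E_jE_i=0$ of the idempotents to strict associative orthogonality $E_iE_j=0$, is in fact automatic and requires no reduction at all. Writing $X=E_iE_j$, idempotency of $E_i,E_j$ gives $E_iX=X$ and $XE_i=(-E_jE_i)E_i=-E_jE_i=X$; but anticommutativity gives $E_iX=E_i(E_iE_j)=-E_i(E_jE_i)=-(E_iE_j)E_i=-XE_i=-X$, so $X=-X$ and hence $X=0$. (No passage to a corner $E\mathcal{A}E$ is needed either, beyond the harmless observation that $\iota(1)$ acts as a two-sided unit on $\iota(H_3(\mathbb{O}))$.) With that noted, the rest of your argument --- the Peirce components $\iota(a[ij])=T_{ij}(a)+T_{ji}(\bar a)$ with $T_{ij}(a)\in E_i\mathcal{A}E_j$, the multiplication rule $T_{ij}(a)T_{jk}(b)=T_{ik}(ab)$ for distinct $i,j,k$, injectivity of each $T_{ij}$ from $T_{ij}(a)T_{ji}(\bar a)=|a|^2E_i$, and the final re-association trick forcing $(ad)e=a(de)$ in $\mathbb{O}$ --- is correct as written.
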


\begin{example}[Spin factor $\mathcal{J}\Spin$]
Let  $q$ be a non-degenerate quadratic form on $\R^n$, and consider the (associative) Clifford algebra $\Cl(\R^n,q)$, with defining identity $vv=-q(v,v)1$ (see \cite{LawsonMichelsohn}, \cite[page 11]{baez}). Then $vw+wv=-2q(v,w)1$, so that the subspace $\mathcal{J}\Spin(\R^n,q)=\Span(1)\oplus \R^n\subset \Cl(\R^n,q)$ is closed under the Jordan operation, and is hence a special Jordan algebra. It is formally real if and only if $q$ is negative-definite, in which case we call it a \emph{spin factor} and denote it by $\mathcal{J}\Spin_n$.\end{example}

Up to taking direct sums, the examples above cover all formally real Jordan algebras:
\begin{theorem}[\cite{JvNW34}]
\label{T:JvNW}
Let $J$ be a formally real Jordan algebra. Then it is the direct sum of simple ideals, each of which is isomorphic to $H_n(\R)$, $H_n(\C)$, $H_n(\HH)$, $H_3(\mathbb{O})$, or $\mathcal{J}\Spin_n$.
\end{theorem}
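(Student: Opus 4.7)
The plan is to follow the original strategy of Jordan, von Neumann, and Wigner: first reduce to the simple case via a decomposition into orthogonal central idempotents, then classify simple formally real Jordan algebras by choosing a complete system of primitive orthogonal idempotents, carrying out the Peirce decomposition, and identifying the resulting ``coordinate algebra'' as one of the four real composition algebras $\R,\C,\HH,\mathbb{O}$.

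First I would establish the spectral machinery. Using the Jordan identity one shows that the subalgebra generated by a single element $a\in J$ is power-associative, so $a$ has a well-defined minimal polynomial $p(t)\in\R[t]$. Formal reality forces $p$ to split over $\R$ with distinct real roots (any irreducible quadratic factor $t^2+\alpha t+\beta$ with negative discriminant would let one write $0$ as a nontrivial sum of squares), and a partial-fractions argument then produces a unique decomposition $a=\sum \lambda_i e_i$ with distinct $\lambda_i$ and pairwise orthogonal idempotents $e_i$ summing to the unit $1$ (which exists by a standard argument in the finite-dimensional formally real setting). In particular $J$ is unital and has an abundant supply of idempotents; in fact the set of sums of squares forms a proper convex cone whose extreme rays are spanned by primitive idempotents. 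Writing $1=e_1+\cdots+e_r$ as a sum of \emph{primitive} orthogonal idempotents, and observing that a central idempotent $c$ splits $J$ as $J=cJ\oplus(1-c)J$, one reduces by induction to the case where there are no proper central idempotents, i.e.\ $J$ is simple.

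Next I would set up the Peirce decomposition relative to $e_1,\ldots,e_r$. The operators $L_{e_i}$ commute and satisfy $2L_{e_i}^3-3L_{e_i}^2+L_{e_i}=0$, so their joint eigenspaces decompose $J=\bigoplus_{i\le j}J_{ij}$ with $J_{ii}=\{x:e_i\bullet x=x\}$ and, for $i\ne j$, $J_{ij}=\{x:e_i\bullet x=e_j\bullet x=x/2\}$. Primitivity plus formal reality force each $J_{ii}$ to be one-dimensional (spanned by $e_i$), and the Jordan identity yields the standard multiplication rules: $J_{ij}\bullet J_{jk}\subset J_{ik}$, $J_{ij}\bullet J_{kl}=0$ when $\{i,j\}\cap\{k,l\}=\varnothing$, and $J_{ij}\bullet J_{ij}\subset J_{ii}+J_{jj}$. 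When $r\ge 3$, I would fix indices $1,2,3$ and show that the map $x\mapsto 2(x\bullet e_1)\bullet e_2$-style constructions endow $J_{12}$ with the structure of a real composition algebra $D$ with norm coming from the quadratic form $x\mapsto 2x\bullet x\in\R e_1\oplus\R e_2\cong \R$, and that $J_{ij}\cong D$ compatibly for all $i\ne j$. The off-diagonal multiplications then correspond exactly to those of the Hermitian matrix algebra $H_r(D)$.

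Finally I would identify $D$ and handle the low-rank cases. Hurwitz's theorem (combined with formal reality, which rules out split forms) forces $D\in\{\R,\C,\HH,\mathbb{O}\}$. When $D=\mathbb{O}$ the associator identities needed for $H_r(D)$ to be Jordan only hold for $r\le 3$ (this is the classical consequence of non-associativity of the octonions), so the octonionic case is restricted to $H_3(\mathbb{O})$. The rank-$1$ case is $J=\R$ (absorbed into $H_1(\R)$), and the rank-$2$ case requires separate treatment: there $J=\R e_1\oplus\R e_2\oplus J_{12}$ with $J_{12}\bullet J_{12}\subset\R 1$ giving a negative-definite quadratic form on $J_{12}$, so $J$ is a spin factor $\mathcal{J}\Spin_n$ with $n=\dim J_{12}+1$. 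Assembling the simple pieces gives the stated list. The main obstacle is the coordinatization step: verifying rigorously that the off-diagonal Peirce spaces can be coordinatized by a \emph{single} composition algebra $D$ in a way consistent across all pairs $(i,j)$, and that formal reality enforces positive-definiteness of the resulting norm, so that Hurwitz's theorem applies.
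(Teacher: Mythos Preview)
The paper does not actually prove this theorem: it is stated in the appendix with the citation \cite{JvNW34} and no argument whatsoever, serving only as a black box to be invoked in the proof of Theorem~\ref{MT:classification}. So there is no ``paper's own proof'' to compare against. Your sketch is essentially the classical Jordan--von Neumann--Wigner strategy (spectral theorem for single elements, decomposition into simple pieces via central idempotents, Peirce decomposition relative to a complete system of primitive orthogonal idempotents, coordinatization by a composition algebra, and Hurwitz), which is exactly what the cited reference does; it goes well beyond what the paper itself supplies.
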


For a special Jordan algebra $J$, there is an associative algebra $\mathcal{U}=\mathcal{U}(J)$, called its \emph{universal enveloping algebra}, and an embedding $J\to\mathcal{U}$ satisfying the following property: For any associative algebra $\mathcal{A}$,  any morphism of Jordan algebras $J\to \mathcal{A}^+$  extends to a unique morphism of associative algebras $\mathcal{U}\to \mathcal{A}$  (see \cite[Sec. 5]{BirkhoffWhitman49}).

We collect the following well-known descriptions of certain universal enveloping algebras for easy reference:
\begin{proposition}\ \label{P:enveloping}
\begin{enumerate}[a)]
\item If the Jordan algebra $J$ has an identity, and is the sum of ideals $J=J_1\oplus J_2$, then so is $\mathcal{U}(J)=\mathcal{U}(J_1)\oplus\mathcal{U}(J_2)$.

\item Let $\K=\R,\C$, or $\HH$. The universal enveloping algebra of $H_n(\K)$ is the algebra $\mathcal{U}=\K^{n\times n}$ of all $n\times n$-matrices with entries in $\K$, unless $(n,\K)=(2,\HH)$, in which case $\mathcal{U}=\HH^{2\times 2}\oplus\HH^{2\times 2}$. 

\item The universal enveloping algebra of the spin factor $\mathcal{J}\Spin_n$ is $\mathcal{U}=\Cl(\R^n,q)$, the Clifford algebra associated to any negative-definite quadratic form $q$ on $\R^n$.
\end{enumerate}\end{proposition}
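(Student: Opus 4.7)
For part (a), I would set $e_i \in J_i$ to be the identity of each summand, so that $e_1 + e_2 = 1_J$ decomposes the identity of $J$. The Jordan identities $e_i^2 = e_i$ and $e_1 \bullet e_2 = 0$ translate, inside $\mathcal{U}(J)$, into $e_1, e_2$ being orthogonal idempotents summing to $1$. Moreover, for $a \in J_1$ the equalities $a \bullet e_2 = 0$ and $a \bullet (e_1 + e_2) = a$, combined with associativity manipulations, force $a e_2 = e_2 a = 0$ (and symmetrically for $a \in J_2$), showing that $e_1$ and $e_2$ are central in $\mathcal{U}(J)$. This yields a product decomposition $\mathcal{U}(J) = e_1 \mathcal{U}(J) \times e_2 \mathcal{U}(J)$. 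Applying the universal property to the compositions $J_i \hookrightarrow J \to \mathcal{U}(J)$ on one side and $J \to J_i \to \mathcal{U}(J_i)$ on the other then produces mutually inverse maps between $e_i \mathcal{U}(J)$ and $\mathcal{U}(J_i)$.

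For part (c), the argument is immediate from the universal property of the Clifford algebra. The tautological inclusion $\mathcal{J}\Spin_n = \Span(1) \oplus \R^n \hookrightarrow \Cl(\R^n, q)^+$ is a Jordan embedding. Given any unital Jordan morphism $\phi : \mathcal{J}\Spin_n \to \mathcal{A}^+$, its restriction to $\R^n$ satisfies $\phi(v)^2 = \phi(v \bullet v) = -q(v,v)\, 1_{\mathcal{A}}$, which is exactly the defining relation of $\Cl(\R^n, q)$. By universality of the Clifford algebra, $\phi|_{\R^n}$ extends uniquely to an associative morphism $\Cl(\R^n, q) \to \mathcal{A}$ agreeing with $\phi$ on $1$, giving the universal property of $\mathcal{U}(\mathcal{J}\Spin_n)$.

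For part (b), the tautological embedding $H_n(\K) \hookrightarrow (\K^{n \times n})^+$ induces an associative morphism $\mathcal{U}(H_n(\K)) \to \K^{n \times n}$, and surjectivity is clear since every matrix unit can be written as an iterated anticommutator of Hermitian matrices. The main obstacle is injectivity, together with the exceptional behaviour at $(n,\K) = (2, \HH)$. For the latter, the key point is that the quaternionic reversal anti-involution $X \mapsto \bar{X}^t$ on $\HH^{2 \times 2}$ restricts to the identity on $H_2(\HH)$, hence is invisible to the Jordan structure; this produces two inequivalent associative extensions of the Jordan embedding $H_2(\HH) \hookrightarrow (\HH^{2 \times 2})^+$, forcing $\mathcal{U}(H_2(\HH)) = \HH^{2 \times 2} \oplus \HH^{2 \times 2}$ by universality. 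In all remaining cases I would either compute $\dim_\R \mathcal{U}(H_n(\K))$ via a Poincar\'e--Birkhoff--Witt-type basis for the special universal enveloping algebra and compare with $\dim_\R \K^{n \times n}$, or simply invoke Jacobson's classical structure theorems for the universal enveloping algebras of the simple classical Jordan algebras.
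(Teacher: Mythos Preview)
The paper does not actually prove this proposition: its entire proof consists of citations to \cite{JJ49}, \cite{BirkhoffWhitman49}, and \cite{LawsonMichelsohn}. Your proposal goes well beyond this, supplying direct arguments for (a) and (c) that are correct and self-contained. Your verification in (a) that the $e_i$ are central orthogonal idempotents in $\mathcal{U}(J)$ is sound (the key step $ae_2 = e_2a = 0$ for $a\in J_1$ follows exactly as you indicate, by multiplying $ae_2=-e_2a$ on either side by $e_2$), and the universal-property argument in (c) is the standard one.

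There is, however, a genuine gap in your treatment of the exceptional case $(n,\K)=(2,\HH)$ in part (b). The anti-involution $X\mapsto \bar{X}^t$ does \emph{not} produce two distinct associative morphisms $\mathcal{U}(H_2(\HH))\to\HH^{2\times 2}$: the universal property itself guarantees that any associative extension of a fixed Jordan embedding $J\hookrightarrow\mathcal{A}^+$ is unique, so your ``two inequivalent extensions'' cannot exist as stated. (Concretely, if you try to manufacture a second map via $\sigma$ and the opposite algebra, the resulting morphism agrees with the tautological one on $H_2(\HH)$ and hence everywhere.) Moreover, the same anti-involution exists for $H_n(\R)$ and $H_n(\C)$ without producing any doubling, so it cannot be the distinguishing feature. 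The cleanest route to the exceptional case is to use the isomorphism $H_2(\HH)\cong\mathcal{J}\Spin_5$ already noted in the paper and then apply your own part (c), which gives $\mathcal{U}\cong\Cl_5\cong\HH^{2\times 2}\oplus\HH^{2\times 2}$ from the Clifford algebra tables. Your fallback of invoking Jacobson's structure theorems is exactly what the cited reference \cite{BirkhoffWhitman49} does, so that part of your proposal is in line with the paper.
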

\begin{proof}
See for instance \cite[Theorem 2]{JJ49} for (a), \cite[Theorems 7--10] {BirkhoffWhitman49}for (b), and \cite[Proposition 1.1]{LawsonMichelsohn} for (c).  \end{proof}

For a description of the Clifford algebras in term of matrix algebras, see Table I of \cite{LawsonMichelsohn}. In particular, for $q$ negative definite, $\Cl(\R^n,q)$ is simple if and only if $n\equiv 1\,(\textrm{mod }4)$.

\section{Symmetries}
\label{A:symmetry}
The goal of this appendix is to exhibit the set of all ``foliated'' linear maps between two infinitesimal foliations as an algebraic variety. More precisely, we consider \emph{two} types of ``foliated'' linear maps:
\begin{definition}
Let $(V,\F)$ and $(V',\F')$ be infinitesimal foliations. We say a linear map $\phi:V\to V'$ \emph{takes leaves into leaves} if $\phi(L)$ is contained in a leaf of $\F'$, for every leaf $L$ of $\F$. 
We say $\phi$ \emph{takes leaves (on)to leaves} if $\phi(L)$ is equal to a leaf of $\F'$, for every leaf $L$ of $\F$. We denote the sets of all such linear maps by $\Hom((V,\F),(V',\F'))$ and $\Hom^*((V,\F),(V',\F'))$, respectively.
\end{definition}

In order to describe $\Hom^*((V,\F),(V',\F'))$, we will use the adjoint of the map $\phi$, which we denote $\phi^T:V'\to V$.
\begin{proposition}
\label{P:foliated}
Let $\phi\in\Hom(V,V')$. Then, in the notation above,
\begin{enumerate}[a)]
\item $\phi$ takes leaves into leaves if and only $\phi^*(\R[V']^{\F'})\subset \R[V]^\F$.
\item $\phi$ takes leaves onto leaves if and only if both $\phi$ and its transpose $\phi^T$ take leaves into leaves.\end{enumerate}
In particular, $\Hom((V,\F),(V',\F'))$ and $\Hom^*((V,\F),(V',\F'))$ are algebraic varieties in $\Hom(V,V')$.
\end{proposition}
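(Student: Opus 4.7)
For part (a), the forward direction is immediate: if $\phi$ sends each leaf of $\F$ into a single leaf of $\F'$ and $f\in\R[V']^{\F'}$, then $\phi^*f=f\circ\phi$ is constant on leaves of $\F$. For the reverse, I would invoke Algebraicity (Theorem~\ref{T:algebraicity}): fixing generators $\rho'_1,\ldots,\rho'_N$ of $\R[V']^{\F'}$, the leaves of $\F'$ are the fibers of $\rho'=(\rho'_1,\ldots,\rho'_N)$, so the hypothesis $\phi^*\rho'_i\in\R[V]^\F$ forces each $\rho'_i\circ\phi$ to be constant on leaves of $\F$, i.e.\ $\phi$ carries every leaf into a single fiber of $\rho'$.

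For part (b), my plan is to first treat a linear isomorphism $\psi\colon W\to W'$ between two infinitesimal foliations and then reduce the general case to it. The key identity is $\psi^*(g\circ\psi^T)(w)=g(\psi^T\psi\,w)$ for $g\in\R[W]^{\F|_W}$, combined with the observation that $\psi^T\psi$ is the Hessian of $\tfrac12\|\psi\,\cdot\,\|^2=\psi^*(\tfrac12\|\cdot\|^2)$, a polynomial that is $\F|_W$-basic by part (a) whenever $\psi$ takes leaves into leaves. Assuming $\psi$ takes leaves onto leaves, part (a) promotes $\psi^*$ to a graded algebra isomorphism of basic algebras, and Proposition~\ref{P:multi-grading}(e) gives that $\psi^T\psi$ sends leaves onto leaves. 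Then $g\circ\psi^T\psi$ is basic on $W$, so $\psi^*(g\circ\psi^T)$ is basic, bijectivity of $\psi^*$ makes $g\circ\psi^T$ basic on $W'$, and (a) yields that $\psi^T$ takes leaves into leaves. Conversely, if both $\psi$ and $\psi^T$ take leaves into leaves, then for a leaf $L$ of $\F|_W$ with $\psi(L)\subset L'$ and $\psi^T(L')\subset L''$, the set $\psi^T\psi(L)\subset\psi^T(L')\subset L''$ is already a full leaf by Proposition~\ref{P:multi-grading}(e), so it equals $L''$; this forces $\psi^T(L')=\psi^T\psi(L)$, and applying the inverse $(\psi^T)^{-1}$ yields $\psi(L)=L'$.

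To reduce to the isomorphism case, note that whenever $\phi$ takes leaves into leaves, $\|\phi\,\cdot\,\|^2=\langle\cdot,\phi^T\phi\,\cdot\,\rangle$ is $\F$-basic by (a), so Lemma~\ref{L:idempotents}(b) makes $W:=(\ker\phi)^\perp$ an $\F$-invariant subspace; symmetrically $W':=\mathrm{image}(\phi)$ is $\F'$-invariant (either via the basic polynomial $\|\phi^T\,\cdot\,\|^2$ when $\phi^T$ takes leaves into leaves, or directly when $\phi$ is onto leaves, since any $\F'$-leaf through $\phi(v)$ equals $\phi(L_v)$). A short adjoint calculation gives $\phi^T=\iota_W\circ(\phi|_W)^T\circ P_{W'}$, where $P_{W'}\colon V'\to W'$ is orthogonal projection, which takes $\F'$-leaves onto $\F'|_{W'}$-leaves by Proposition~\ref{P:multi-grading}(c). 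Combining this factorization with the isomorphism case applied to $\psi:=\phi|_W$ then proves both directions of (b). For the ``in particular'' assertion, both conditions are cut out by finitely many algebraic equations in $\phi$: requiring $\phi^*\rho'_i\in\R[V]^\F$ (respectively $(\phi^T)^*\rho_j\in\R[V']^{\F'}$) is a linear condition on polynomial coefficients that depend polynomially on the entries of $\phi$. The main subtle point is the bootstrapping identity $\psi^*(g\circ\psi^T)=g\circ\psi^T\psi$, which converts a question about $\psi^T$ into one about the Hessian $\psi^T\psi$ of a basic polynomial, and this is where Proposition~\ref{P:multi-grading}(e) does the heavy lifting.
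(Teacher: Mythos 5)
Your proof is correct and follows essentially the same architecture as the paper's: part~(a) via separation of leaves by basic polynomials (Algebraicity), and part~(b) by first handling the linear-isomorphism case using the fact that $\psi^T\psi=\Hess(\psi^*\lVert\cdot\rVert^2)$ takes leaves onto leaves (Proposition~\ref{P:multi-grading}(e)), then reducing the general case via the invariant subspaces $W=(\ker\phi)^\perp$ and $W'=\operatorname{image}(\phi)$ and the factorization $\phi^T=\iota_W\circ(\phi|_W)^T\circ p_{W'}$. The only real divergence is inside the isomorphism case: where the paper simply composes $\phi^T=(\phi^T\phi)\circ\phi^{-1}$ and $\phi^{-1}=(\phi^T\phi)^{-1}\circ\phi^T$, you reroute through pullbacks using the identity $\psi^*(g\circ\psi^T)=g\circ\psi^T\psi$ and the bijectivity of $\psi^*$ on basic algebras; both are valid and rely on the same input.
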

\begin{proof}
\begin{enumerate}[a)]
\item Assume $\phi$ takes leaves into leaves, and let $f\in\R[V']^{\F'}$. Then $\phi^*(f)$ is basic, because if $x,y\in V$ are in the same leaf, then $\phi(x),\phi(y)$ are in the same leaf, and so $f(\phi(x))=f(\phi(y))$. Conversely, assume $\phi^*(\R[V']^{\F'})\subset \R[V]^\F$, and let $x,y\in V$ in the same leaf. If $\phi(x),\phi(y)$ were on different leaves, there would exist $f\in\R[V']^{\F'}$ such that $f(\phi(x))\neq f(\phi(y))$, because the algebra of basic polynomials separates the leaves (see \cite{LytchakRadeschi15} and Theorem \ref{T:algebraicity}). This would contradict the assumption that $\phi^*(f)$ is basic, therefore $\phi$ takes leaves into leaves.
\item We divide the proof into two cases. Case 1: $\phi$ is invertible. Assume $\phi$ takes leaves onto leaves. Then the same is true of its inverse. Moreover, since $\phi$ takes leaves into leaves, $\phi^*(|x|^2)=\phi^T\phi$ is a quadratic basic polynomial on $V$. By Proposition \ref{P:multi-grading}, $\phi^T\phi$ takes leaves onto leaves, and therefore so does  $\phi^T=(\phi^T\phi)\circ\phi^{-1}$. Conversely, assume $\phi$ and $\phi^T$ take leaves into leaves. Then  $\phi^T\phi$ takes leaves onto leaves by Proposition \ref{P:multi-grading}, and so $\phi^{-1}=(\phi^T\phi)^{-1}\circ\phi^T$ takes leaves into leaves. Since both $\phi$ and $\phi^{-1}$ take leaves into leaves, they actually take leaves onto leaves.

Case 2: $\phi$ is any linear map $V\to V'$. Let $W=\ker(\phi)^\perp\subset V$ and $W'=\phi(V)\subset V'$, and denote by $i_W,i_{W'},p_W,p_{W'}$ the natural inclusions and orthogonal projections. Suppose $\phi$ takes leaves onto leaves. Then $W$ and $W'$ are invariant subspaces. The restricted map $\phi|_W:W\to W'$ takes leaves onto leaves and is invertible. By Case 1, $(\phi|_W)^T=\phi^T|_{W'}$ takes leaves into leaves. Therefore $\phi^T=i_W\circ\phi^T|_{W'}\circ p_{W'}$ takes leaves into leaves as well. Conversely, if $\phi$ and $\phi^T$ take leaves into leaves, then $W$ and $W'$ are invariant subspaces, because they are the orthogonal complements of the kernels of $\phi$ and $\phi^T$. Moreover, their restrictions $\phi|_W:W\to W'$ and $(\phi|_W)^T$ also take leaves into leaves. By Case 1, $\phi|_W$ takes leaves onto leaves, and therefore so does $\phi=i_{W'}\circ\phi|_W\circ p_W$. 
\end{enumerate}
\end{proof}
If one is given sets $\rho_1,\ldots \rho_N$ and $\rho'_1,\ldots \rho'_{N'}$ of homogeneous generators for the algebras of basic polynomials $\R[V]^\F$ and $\R[V']^{\F'}$, respectively, one may write explicit polynomial equations defining  $\Hom((V,\F),(V',\F'))$  in the following way. (And analogously for  $\Hom^*((V,\F),(V',\F'))$.)

First note that $\phi^*(\R[V']^{\F'})\subset \R[V]^\F$ if and only if $\phi^*(\rho'_i)\in\R[V]^\F$ for every $i=1,\ldots N'$. Find a basis for the graded component $\R[V]^\F_d$ of degree $d=\deg(\rho'_i)$. This can be done by starting with all monomials in $\rho_j$, for $j=1,\ldots N$, that have degree $d$, then performing Gaussian elimination. Find linear equations on $\R[V]_d$ that define the subspace $\R[V]^\F_d$. These equations, when applied to $\phi^*(\rho'_i)$, become a finite collection of degree $d$ equations on $\phi$, which are satisfied if and only if $\phi^*(\rho'_i)\in\R[V]^\F$.

We illustrate the procedure outlined above in one simple example:
\begin{example}
Let $(V,\F)$ be the orbit-decomposition of the product action of $\OO(n)\times\OO(n)$ on $V=\R^n\times\R^n$. We will describe $\End(V,\F)$ and $\End^*(V,\F)$. If $x,y$ are the coordinates on $\R^n\times\R^n$, then the algebra of invariants is generated by $|x|^2$ and $|y|^2$. Let
\[ \phi=\begin{pmatrix}
A & B\\
C & D
\end{pmatrix} \]
be the general endomorphism of $V$, where $A,B,C,D$ are $n\times n$ real matrices. The pull-back quadratic forms $\phi^*(|x|^2)$ and $\phi^*(|y|^2)$ are associated to the symmetric matrices
\[
\phi^*(|x|^2)=\begin{pmatrix}
A^TA & A^TB\\
B^TA & B^TB
\end{pmatrix}
 \qquad \phi^*(|y|^2)=\begin{pmatrix}
C^TC & C^TD\\
D^TC & D^TD
\end{pmatrix}
\]
On the other hand, a symmetric $n\times n$ matrix represents an invariant quadratic form if and only it is of the form $\operatorname{diag} (c_1I,c_2I)$, where $I$ denotes the $n\times n$ identity matrix, and $c_1,c_2\in\R$. Therefore $\phi$ sends leaves into leaves if and only if $A^TB=C^TD=0$ and $A^TA$, $B^TB$, $C^TC$ and $D^TD$ are multiples of the identity. In other words, $\phi$ takes leaves into leaves if and only it has one the forms:
\[
\begin{pmatrix}
0 & B\\
0 & D
\end{pmatrix}
\quad
\begin{pmatrix}
0 & B\\
C & 0
\end{pmatrix}
\quad
\begin{pmatrix}
A & 0\\
0 & D
\end{pmatrix}
\quad
\begin{pmatrix}
A & 0\\
C & 0
\end{pmatrix}
\]
where $A,B,C,D\in\R \OO(n)$ are scalar multiples of orthogonal matrices.
Moreover, applying the same procedure to $\phi^T$, we conclude that $\phi$ takes leaves onto leaves if and only if it has one of the forms
\[
\begin{pmatrix}
0 & B\\
C & 0
\end{pmatrix}
\quad
\begin{pmatrix}
A & 0\\
0 & D
\end{pmatrix}
\]
for $A,B,C,D\in\R \OO(n)$. In particular, every linear map that takes leaves onto leaves is equivariant with respect to some automorphism of $G=\OO(n)\times \OO(n)$.
\end{example}
\bibliographystyle{alpha}
\bibliography{refsmooth,ref}
\end{document}